\documentclass[11pt]{article}
\usepackage{amsmath,amssymb,graphicx}
\usepackage{subcaption}
\usepackage{amsfonts, amsmath, amssymb,latexsym,amsthm, mathrsfs, mathtools}
\usepackage[margin=0.9in]{geometry}
\usepackage{url}
\baselineskip=24pt 

\usepackage[english]{babel}
\usepackage{epsfig}
\usepackage{pifont}
\usepackage{tikz-cd}
\usepackage{authblk}

\setlength{\paperwidth}{210mm}
\setlength{\paperheight}{297mm}
\setlength{\textheight}{225mm}
\setlength{\textwidth}{155mm}

\newtheorem{thm}{Theorem}[section]
\newtheorem*{thm*}{Theorem}
\newtheorem*{prop*}{Proposition}
\newtheorem*{corol*}{Corollary}
\newtheorem{prop}[thm]{Proposition}
\newtheorem{corol}[thm]{Corollary}
\newtheorem{lemma}[thm]{Lemma}
\usepackage[all]{xy}
\newtheorem*{claim*}{Claim}

\usepackage{chngcntr}
\counterwithin{figure}{section}

\usepackage{enumitem}
\theoremstyle{definition}
\newtheorem{defi}[thm]{Definition}
\newtheorem*{defi*}{Definition}

\newtheorem{remark}[thm]{Remark}

\theoremstyle{plain}
\newtheorem*{namedthm}{\namedthmname}
\newcounter{namedthm}
\makeatletter
\newenvironment{named}[1]
{\def\namedthmname{#1}%
	\refstepcounter{namedthm}%
	\namedthm\def\@currentlabel{#1}}
{\endnamedthm}
\makeatother

\captionsetup[tabular]{labelformat=empty}
 \pretolerance=10000

\usepackage[maxbibnames=99, backend=bibtex, style=alphabetic, sorting=nyt]{biblatex}
\addbibresource{referencies.bib}

\usepackage{fancyhdr}

\usepackage[hidelinks]{hyperref}

%\date{}
\title{A model for boundary dynamics of Baker domains}
\author[1,2]{N\'uria Fagella\thanks{This work is partially supported by the (a) Spanish State Research Agency, through the Severo Ochoa and María de Maeztu Program for Centers and Units of Excellence in R\&D (CEX2020-001084-M, and PID2020-118281GB-C32; (b) Generalitat de Catalunya through the grants 2017SGR1374 and ICREA Academia 2020.}}
\author[1]{Anna Jov\'e\thanks{Supported by the Spanish government grant FPI PRE2021-097372.}}
\affil[1]{\small Departament de Matemàtiques i Informàtica, Universitat de Barcelona, Barcelona, Spain}
\affil[2]{\small Centre de Recerca Matemàtica, Barcelona, Spain}

\begin{document}
\maketitle
\begin{abstract}
	We consider the transcendental entire function $ f(z)=z+e^{-z} $, which has a doubly parabolic Baker domain $ U $ of degree two, i.e. an invariant stable component for which all iterates converge locally uniformly to infinity, and for which the hyperbolic distance between successive iterates converges to zero. It is known from general results that the dynamics on the boundary is ergodic and recurrent and that the set of points in $ \partial U $ whose orbit escapes to infinity has zero harmonic measure. For this model we show that stronger results hold, namely that this escaping set is non-empty, and it is organized in curves encoded by some symbolic dynamics, whose closure is precisely $ \partial U $.
	We also prove that nevertheless, all escaping points in $ \partial U $ are non-accessible from $ U $, as opposed to points in $ \partial U $ having a bounded orbit, which are all accessible. Moreover, repelling periodic points are shown to be dense in $ \partial U $, answering a question posted in \cite{bfjk19}. None of these features are known to occur for a general doubly parabolic Baker domain.
\end{abstract}
\section{Introduction}

We consider a transcendental entire function $ f\colon\mathbb{C}\to\mathbb{C} $ and denote by $ \left\lbrace f^n\right\rbrace _{n\in\mathbb{N}} $ its iterates, which generate a discrete dynamical system in $ \mathbb{C} $. Then, the complex plane is divided into two totally invariant sets: the {\em Fatou set} $ \mathcal{F}(f) $, defined to be the set of points $ z\in\mathbb{C} $ such that $ \left\lbrace f^n\right\rbrace _{n\in\mathbb{N}} $ forms a normal family in some neighbourhood of $ z$; and the {\em Julia set} $ \mathcal{J}(f) $, its complement. Another dynamically relevant set is the escaping set $ \mathcal{I}(f) $, where points converge to infinity, the essential singularity of the function.
For background on the iteration of entire functions see e.g. \cite{bergweiler}.

The Fatou set is open and consists typically of infinitely many connected components, called {\em Fatou components}. Due to the invariance of the Fatou and the Julia sets, Fatou components are periodic, preperiodic or wandering. For entire functions, periodic ones are always simply connected \cite{baker1984}, so the Riemann map can be used as a uniformization. More precisely, let $ U $ be a invariant Fatou component of $ f $ and let $ \varphi $ be a Riemann map from the open unit disk $ \mathbb{D} $ onto $ U $. Then,  \[
g\colon\mathbb{D}\longrightarrow\mathbb{D},\hspace{1cm}g\coloneqq \varphi^{-1}\circ f\circ\varphi
\] is an analytic self-map of $ \mathbb{D} $, and  $ f_{|U} $ and $ g_{|\mathbb{D}} $ are conformally conjugate by $ \varphi $. Therefore, the study of holomorphic self-maps of $ \mathbb{D} $ is a good approach to analyze the dynamics of $ f_{|U} $. 

The Denjoy-Wolff Theorem (see Sect. \ref{sect-2-prelim}) asserts that, whenever a holomorphic self-map $ g $ of $ \mathbb{D} $ is not conjugate to a rotation, all orbits converge to the same point $ p\in\overline{\mathbb{D}} $ (the \textit{Denjoy-Wolff point} of $ g $). From this celebrated result,   the classification theorem of invariant Fatou components of entire maps can be deduced, which was proved earlier by Fatou (\cite{fatou1920}) using different techniques. Indeed, a given invariant Fatou component is either a \textit{Siegel disk} (when it is conjugate to an irrational rotation), an \textit{attracting basin }(when all orbits converge to the same point in $ U $)
 or a \textit{parabolic basin} or a \textit{Baker domain} (when all orbits converge to the same point in $\partial U $). The difference between the last two possibilities comes from the nature of the convergence point: for Baker domains it is the essential singularity, so $ f $ is not defined at it; whereas for parabolic basins, it is a fixed point of multiplier 1.

One may ask if the previous conjugacy with a holomorphic self-map of $ \mathbb{D} $ can be used to describe the dynamics of $ f $ in the boundary of $  U $. First, from the fact that $ f(\partial U)\subset \partial U $, it can be deduced that $ g $ is an {\em inner function}, i.e. an analytic self-map of $ \partial\mathbb{D} $ such that the radial limits  belong to $\partial \mathbb{D}$ for  almost every point in $\partial \mathbb{D}$. Hence, a boundary extension \[g^*\colon E\subset\partial\mathbb{D}\to\partial\mathbb{D} \]can be defined using radial limits, where $ E $ is a set of full measure in $ \partial\mathbb{D} $, and it induces a dynamical system defined almost everywhere on $ \partial \mathbb{D} $. One may expect a priori that $ f_{|\partial U} $ and $ g^*_{|\partial \mathbb{D}} $ share dynamical properties.
Nevertheless, this is not always the case. The main obstacle is that the Riemann map cannot be assumed to extend continuously to the boundary. In fact, this is the usual case for unbounded Fatou components of transcendental entire functions (compare \cite{baker-dominguez, bargmann}).  Therefore, $ \varphi $ is no longer a conjugacy in $  \partial \mathbb{D}$ and properties of $ g^*_{|\partial \mathbb{D}} $ do not transfer to $ f_{|\partial U} $ in general. However, successful results have been obtained in some cases.

 First, Devaney and Goldberg   studied the exponential family $ \lambda e^z $ with $ 0<\lambda<\frac{1}{e} $, \cite{goldberg-devaney},  whose Fatou set consists of a totally invariant attracting basin $ U $. From the explicit computation of the inner function, accesses to infinity were characterized, and the boundary of $ U $, which is precisely the Julia set, was shown to be organized in curves of escaping points and their endpoints, the latter being the only accessible points from $ U $. Such results were generalized to a larger family of functions having a totally invariant attracting basin \cite{baranski, baranski-karpinska}. 
 
On the basis of this successful example, inner functions have been used systematically to understand the dynamics on the boundary of  Fatou components. On the one hand, results of \cite{baker-dominguez,bargmann, bfjk15} describe the topology of the boundary of unbounded Fatou components and their accesses to infinity.  On the other hand, the revealing work in \cite{doering-mañé}, further developed in \cite{rippon-stallard, bfjk19}, describe their ergodic properties. 

We focus on a precise type of periodic Fatou components, {\em Baker domains}, in which iterates converge locally uniformly to infinity. Maps possessing Baker domains are not hyperbolic, nor bounded type (i.e. the set of singularities of the inverse branches of the function is unbounded \cite{eremenko-lyubich}). In contrast with the other periodic Fatou components, in which the dynamics around the convergence point can be conjugate to some predetermined normal form, three different asymptotics are possible for Baker domains (see Thm. \ref{teo-cowen} and Rmk. \ref{remark-bakers}). This leads to a further classification according to their internal dynamics into \textit{doubly parabolic}, \textit{hyperbolic} and \textit{simply parabolic} Baker domains, which also present different boundary properties.

Even though all orbits in a Baker domain tend to infinity, it is still unknown whether a single escaping point  always exists in $     \partial U $. For hyperbolic and simply parabolic univalent Baker domains this question was answered affirmatively by Rippon and Stallard \cite{rippon-stallard}, who showed that the set of boundary escaping points has full harmonic measure with respect to the Baker domain. This result was generalized to finite degree Baker domains and to infinite degree under certain assumptions \cite[Thm. A]{bfjk19}.

On the contrary, for doubly parabolic Baker domains of finite degree the set of  escaping boundary points is known to have zero harmonic measure \cite[Thm. B]{bfjk19}. This connects with the fact that, for the corresponding inner function, no point in $ \partial\mathbb{D} $ converges to the Denjoy-Wolff point. However, the boundaries of such Baker domains are always non-locally connected \cite[Thm. 3.1]{bargmann}, so the Riemann map cannot be used to rule out the existence of  escaping boundary points.  Other unanswered questions about the boundaries of such Baker domains concern periodic points, which are not known to exist in general, or the connection between the accessibility of boundary points and their dynamics.

In this paper, we present a detailed analysis of the dynamics of the transcendental entire function $ f(z)=z+e^{-z} $, which possesses countably many doubly parabolic Baker domains of degree two. It is our belief that a good understanding of this model will throw some light about the correspondence between the inner function and the boundary map, in a more explicit way than the abstract existence of measurable sets. In our work, other interesting properties of both the inner function and the boundary of the Baker domain arise, and are susceptible to hold for a wider family of functions. 

The function we consider, $ f(z)=z+e^{-z} $,  is one of the few explicit examples having doubly parabolic Baker domains of finite degree. Because of that, it was studied previously in \cite{baker-dominguez, fh, bfjk19}. However, many aspects concerning boundary dynamics are still unexplored, and are the object of this paper. 

\begin{figure}[htb!]\centering
	\includegraphics[width=10cm]{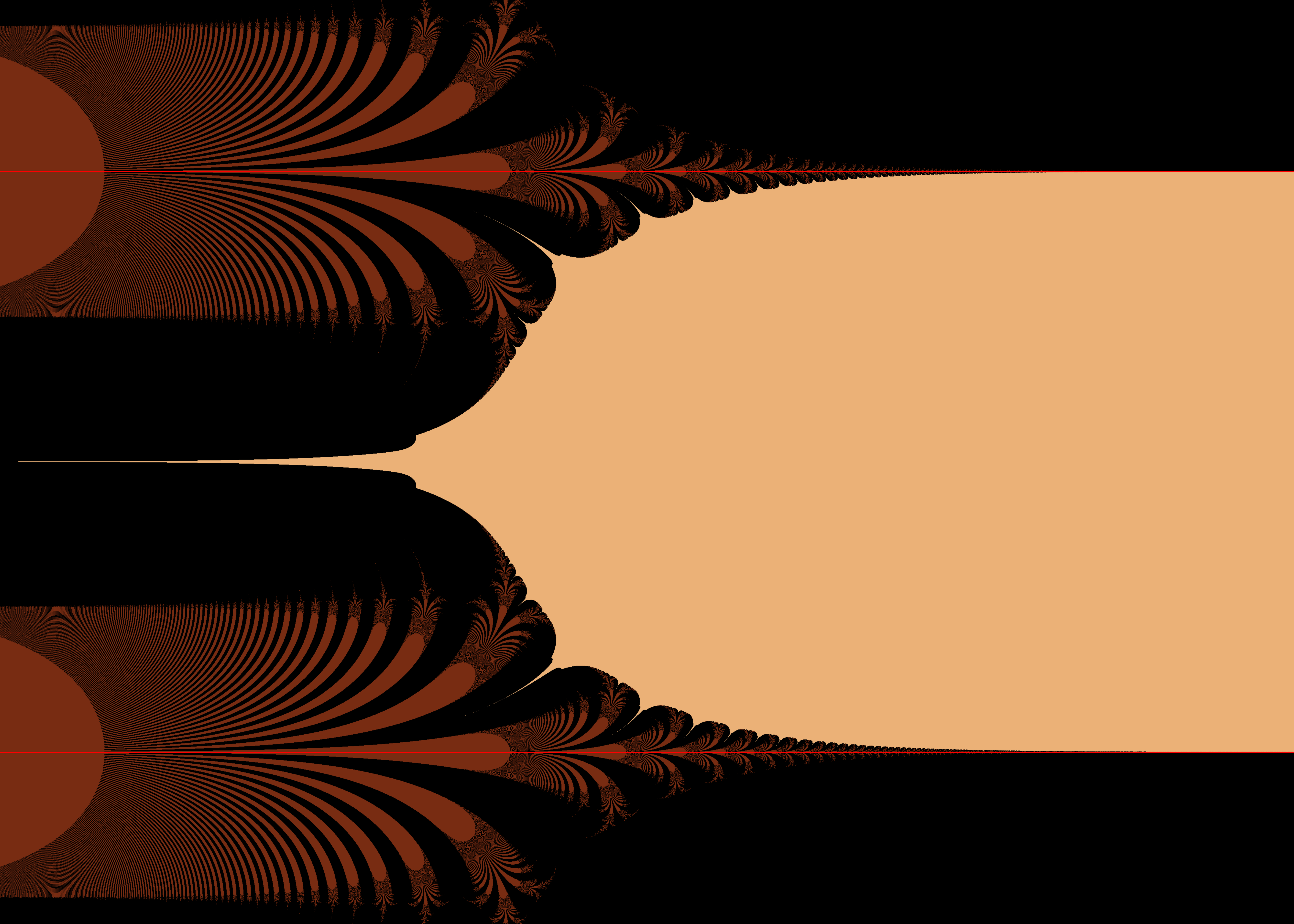}
		\setlength{\unitlength}{10cm}
	\put(-1.1, 0.57){\footnotesize $ \mathbb{R}+\pi i $}
\put(-1.1, 0.11){\footnotesize $ \mathbb{R}-\pi i $}
\put(-0.5, 0.35){\tiny $ \bullet$}
\put(-0.4, 0.35){\tiny $ \times$}
\put(-0.5, 0.33){\tiny $ 0$}
\put(-0.4, 0.33){\tiny $ 1$}
	\caption{\footnotesize Dynamical plane for $ f(z)=z+e^{-z} $. In red, the Julia set of $ f $. In beige, the Baker domain contained in the strip $ \left\lbrace -\pi<\textrm{Im }z<\pi\right\rbrace  $. In black, the rest of the Fatou set of $ f $. The only critical point on the strip (0) is also marked, as well as the corresponding critical value (1).}
\end{figure}

First, Baker and Domínguez \cite[Thm. 5.1]{baker-dominguez} and Fagella and Henriksen \cite[Example 3]{fh} proved, using different arguments, the existence of a doubly parabolic Baker domain $ U_k $ of degree two in each strip $ S_k\coloneqq\left\lbrace (2k-1)\pi\leq\textrm{Im }z\leq(2k+1)\pi\right\rbrace  $, for all $ k\in \mathbb{Z} $. Since the dynamics in all of them are the same, we consider only the Baker domain $ U \coloneqq U_0 $ in the strip $S\coloneqq S_0=\left\lbrace -\pi\leq\textrm{Im }z\leq\pi\right\rbrace  $.  In \cite[Thm. 5.2]{baker-dominguez}, the associated inner function is computed explicitly.

The topology of $ \partial U $ is addressed in \cite[Section 6]{baker-dominguez}, where  it is deduced that $ \partial U $ is  non-locally connected and preimages of infinity by the Riemann map $ \varphi $ (in the sense of radial limits) are dense in the unit circle. Going one step further,  they proved that the impression of the prime end corresponding to 1 is precisely $ \partial S \cup \left\lbrace  \infty\right\rbrace $. Using this, accesses to infinity from $ U $ were characterized in terms of the inner function.

Finally, in \cite[Example 1.2]{bfjk19}, they describe some dynamical sets in $ \partial U$ in terms of measure, as an application of a general theorem (\cite[Thm. B]{bfjk19}). More precisely, they show  that almost every point with respect to the harmonic measure has a dense orbit in $ \partial U $. Therefore, the escaping points in $ \partial U $ have zero harmonic measure. Moreover, 
they conjectured that all escaping points in $ \partial U $ are non-accessible from $ U $ and accessible repelling periodic points are dense in $ \partial U $. In this paper we prove both conjectures.

\subsection*{Statement of results}
Following the approach of \cite{bfjk19}, we aim to give an explicit description of the sets of full and zero harmonic measure which appear as a result of the general ergodic theorems. 

First we describe the escaping set in $ \partial U $. Recall that it is known to have zero harmonic measure so, {a priori}, it is unknown whether it is non-empty. We prove that escaping points do exist in $ \partial U $ and are organized in curves (known as \textit{dynamic rays} or \textit{hairs}) encoded by some symbolic dynamics, as it is not uncommon for transcendental entire functions.  All escaping points are proved to belong to such curves, while non-escaping points are in their accumulation sets. This leads to the following description of the boundary of $ U $.

\begin{named}{Theorem A}\label{teo:A} {\bf (The boundary of $ U $)} 
	Every escaping point in $ \partial U $ can be connected to $ \infty $ by a unique curve of escaping points in $ \partial U $. Moreover, $ \partial U $ is the closure of such curves.
\end{named}

The existence of these dynamic rays follows from general results of \cite{r3s} applied to $ h(w)=we^{-w} $, semiconjugate to $ f $ by $ w=e^{-z} $. From these general results it is deduced that $ h $, and therefore $ f $, are \textit{criniferous functions}, i.e. that all points in the escaping set can be connected to infinity by a curve of escaping points: the dynamic ray. Criniferous functions were introduced in \cite{benini-rempe}, and further studied in \cite{leti}. Nevertheless, in order to have a better control on the geometry of the dynamic rays and their relation with the boundary of $ U $, we choose to prove \ref{teo:A} with an explicit construction, which gives us additionally a parametrization and certain continuity properties.

 Other remarkable properties are observed, such as that all points in $ \partial U $ escape to $ \infty $ in a different ``direction'' than that of the dynamical access. This connects with the fact that, for the inner function, there is no escaping point (in the sense that there are no boundary orbits converging to the Denjoy-Wolff point, apart from the preimages of itself).
Moreover, escaping orbits in $ \partial U $ converge to $ \infty $ exponentially fast, while points in $ U$ do so in a slower fashion, being the map close to the identity.

Next,  we study the landing properties of the dynamic rays mentioned above. More precisely, we prove the following.

\begin{named}{Theorem B}\label{teo:Indec}{\bf (Landing and non-landing dynamic rays)} There exist uncountably many dynamic rays which land at a finite end-point, and there exists uncountably many dynamic rays which do not land. The accumulation set (on the Riemann sphere) of such a non-landing ray is an indecomposable continuum which contains the ray itself.
\end{named}
This contrasts with the exponential maps $ \lambda e^z $, with $ 0<\lambda<\frac{1}{e} $, where all dynamic rays land, due to hyperbolicity. 

On the other hand, 
indecomposable continua were shown to exist in the Julia set of some non-hyperbolic exponential maps $ E_\kappa(z)= e^z+\kappa $, for some values of $ \kappa $, first in \cite{devaney-indecomposable} and later on  \cite{devaney-jarque-indecomposable, devaney-jarque-rocha-indecomposable}, although not as the accumulation set of a dynamic ray. It was shown by Rempe \cite{tesi-lasse-rempe,rempe2007} that indecomposable continua appear as the accumulation set of a dynamic ray in exponential maps $ E_\kappa $, for some values of $ \kappa $. More precisely, he proves that if the singular $ \kappa  $ is on a dynamic ray, then there exist uncountably many dynamic rays whose accumulation set is an indecomposable continuum. However, for the exponential maps $ E_\kappa $, if the singular value is on a dynamic ray, then $ \mathcal{J}(E_\kappa)=\mathbb{C} $ or the Fatou set consists of Siegel disks and preimages of them (see e.g. \cite{DevaneyExp}). Contrastingly, we find these indecomposable continua in the boundary of a Baker domain (and in the boundary of the projected parabolic basin, see Sect. \ref{sec-3-dynamics-f}).

We also address the problem of relating the previous sets, of escaping and non-escaping points, with the set of accessible boundary points from $ U $. Again, symbolic dynamics play an important role, in this case to connect the dynamics in the unit circle with the behaviour in  $\partial U $.

\begin{named}{Theorem C}\label{teo:B}{\bf (Accessible points)}
	Escaping points in $ \partial U $ are non-accessible from $ U $, while points in $ \partial U $ having a bounded orbit are all accessible from $ U $.
\end{named}

Finally, we study periodic points in $ \partial U $. We show  that $ g_{|\partial\mathbb{D}} $ is conjugate to the doubling map (see Sect. \ref{sect-6-accessibility}), so periodic points for $ g $ are dense in $ \partial\mathbb{D} $. Moreover, \ref{teo:B} asserts that periodic points in $ \partial U $, if they exist, are accessible. Both things suggest that periodic points might be dense in  $ \partial U $, which is indeed proven in the following theorem.

\begin{named}{Theorem D}\label{teo:C}{\bf (Periodic points)}
Periodic points are dense in $ \partial U $.
\end{named}
 
 We observe that first statement in \ref{teo:B} corresponds to the first part of the conjecture in \cite{bfjk19}, while the second statement together with \ref{teo:C} provide a positive answer to the second part.
 
 \vspace{0.2cm}
{\bf Structure of the paper.}
Section \ref{sect-2-prelim} is devoted to reviewing some general results on the dynamics of Fatou components, and in particular, of Baker domains, as well as to state other preliminary results needed in the rest of the paper. In Section \ref{sec-3-dynamics-f} one finds the auxiliary results about the dynamics of $ f(z)=z+e^{-z} $, which are used recurrently in the following sections. For completeness, a sketch of the general dynamics of $ f $ is included, summarizing the ideas of \cite{baker-dominguez} and \cite{fh}. Section \ref{sect-4-escaping} is devoted to studying the escaping set and its organization in dynamic rays, proving \ref{teo:A}.  The landing properties of such  rays are discussed in Section \ref{sect-5-non-escaping}, together with the proof of \ref{teo:Indec}. \ref{teo:B} and \ref{teo:C} are proved in Section \ref{sect-6-accessibility} and \ref{sect-periodic-points}, respectively. 

\vspace{0.2cm}
{\bf Notation.} Throughout this article, $ \mathbb{C} $ and $ \widehat{\mathbb{C}} $ denote the complex plane and the Riemann sphere, respectively. The positive and negative real axis are indicated by $ \mathbb{R}_+$ and $ \mathbb{R}_-$, respectively; while the upper and the lower half-plane are indicated by $ \mathbb{H}_+$ and $ \mathbb{H}_-$, respectively. The following notation is used for the horizontal strips of width $ 2\pi $
\[ S_k\coloneqq\left\lbrace (2k-1)\pi\leq\textrm{Im }z\leq(2k+1)\pi\right\rbrace . \] We denote by $ U_k $ the unique Baker domain contained in the strip $ S_k $. We shall denote by $ S $ the central strip $ S_0 $, and by $ U $ its Baker domain, just to lighten the notation.

Given a set $ A\subset \mathbb{C} $, we denote by $ \overline{ A } $ and $ \partial A $, its closure and its boundary taken in $ \mathbb{C} $; and by $ \widehat{\partial} A $,  its boundary when considered in $ \widehat{\mathbb{C}} $.  We denote by $ \textrm{dist}(\cdot, \cdot) $ the Euclidean distance between two points, and for $ z\in\mathbb{C}$ and $ X,Y \subset \mathbb{C}$ we write
\[ \textrm{dist}(z, X)=\inf\limits_{x\in X} \textrm{dist}(z,x),\hspace{0.7cm}\textrm{dist}(X,Y)=\inf\limits_{x\in X, y\in Y} \textrm{dist}(x,y).\]

\vspace{0.2cm}
{\bf Acknowledgments.} We would like to thank Vasiliki Evdoridou, Xavier Jarque, Leticia Pardo-Simón, Lasse Rempe, Phil Rippon and Gwyneth Stallard for interesting discussions and comments, and specially Arnaud Chéritat, for suggesting the presence of indecomposable continua in the boundary of the Baker domain. We are also grateful to the referee for the careful reading and the suggestions that made the paper much better.

\section{Preliminaries}\label{sect-2-prelim}

\subsection*{Inner function associated to a Fatou component}
Let $ U $ be an invariant Fatou component of a transcendental entire function $ f\colon\mathbb{C}\to\mathbb{C} $. Such component is always simply connected \cite{baker1984}, so one may consider a Riemann map $ \varphi\colon \mathbb{D}\to U $. Then, \[
g\colon\mathbb{D}\longrightarrow\mathbb{D},\hspace{1cm}g\coloneqq \varphi^{-1}\circ f\circ\varphi.
\]
is an {\em inner function}, i.e. a holomorphic self-map of the unit disk $ \mathbb{D}$ such that, for almost every $ \theta\in \left[ 0, 2\pi\right)  $, the radial limit $ g^*(e^{i\theta}) $ belongs to $\partial \mathbb{D}$ (see e.g. \cite[Sect. 2.3]{efjs}). Then, $ g $ is called the \textit{inner function associated} to $ U $. Although $ g $ depends on the choice of $ \varphi $, inner functions associated to the same Fatou components are conformally conjugate, so we can ignore the dependence on the Riemann map. 

The dynamics of $ g_{|\mathbb{D}} $  are completely described by the results of Denjoy, Wolff and Cowen, being valid not only for inner functions, but for any holomorphic self-map of $ \mathbb{D} $. The Denjoy-Wolff Theorem describes the asymptotic behaviour of iterates (see e.g. \cite[Thm. IV.3.1.]{carlesongamelin}).

\begin{thm}{\bf (Denjoy-Wolff)}\label{teo-dw}
	Let $ g$ be a holomorphic self-map of  $ \mathbb{D} $, not conjugate to a rotation. Then, there exists $ p\in\overline{\mathbb{D}} $, such that for all $ z\in\mathbb{D} $, $ g^n(z)\rightarrow p $.
	The point $ p $ is called the {\em Denjoy-Wolff point} of $ g $.
\end{thm}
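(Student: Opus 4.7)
The plan is to split into two cases according to whether $g$ has a fixed point inside $\mathbb{D}$. If $g(p)=p$ for some $p\in\mathbb{D}$, I would conjugate by a disk automorphism sending $p$ to $0$ and apply the Schwarz lemma to the resulting self-map $\widetilde{g}$. Since $g$ is not conjugate to a rotation, we cannot have $|\widetilde{g}'(0)|=1$ (Schwarz would force $\widetilde{g}$ to be a rotation), so $|\widetilde{g}'(0)|<1$. Then $\widetilde{g}$ is a strict hyperbolic contraction on a neighbourhood of $0$, and a normal-families argument extends this to locally uniform convergence on all of $\mathbb{D}$, yielding $g^n(z)\to p$ for every $z\in\mathbb{D}$.

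If $g$ has no fixed point in $\mathbb{D}$, I would approximate by $g_r(z)\coloneqq rg(z)$ for $r\in(0,1)$. Each $g_r$ maps $\overline{\mathbb{D}}$ into a compact subset of $\mathbb{D}$, so by Brouwer's theorem it has a (unique) fixed point $p_r\in\mathbb{D}$. As $r\to 1^-$, the $p_r$ must accumulate on $\partial\mathbb{D}$, for otherwise a subsequential limit would be an interior fixed point of $g$. Extract a convergent subsequence $p_{r_n}\to p\in\partial\mathbb{D}$; this $p$ is the candidate Denjoy-Wolff point.

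The core step, and the main obstacle, is Wolff's horocycle lemma: every horocycle $H_R(p)$ (Euclidean disk internally tangent to $\partial\mathbb{D}$ at $p$) is $g$-invariant, $g(H_R(p))\subset H_R(p)$. To prove it I would apply the Schwarz-Pick lemma to each $g_r$ at its fixed point $p_r$: hyperbolic disks around $p_r$ are $g_r$-invariant, and when $p_r\to p\in\partial\mathbb{D}$ these hyperbolic disks, expressed as Euclidean disks, degenerate precisely into horocycles at $p$. Passing to the limit $r\to 1^-$ and using uniform convergence of $g_r\to g$ on compact subsets of $\mathbb{D}$ transfers the invariance to $g$. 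Once the horocycle invariance is available, convergence follows quickly: the iterates $\{g^n\}$ form a normal family; any subsequential limit $h$ satisfies $h(\mathbb{D})\subset \bigcap_{R>0}\overline{H_R(p)}=\{p\}$, so $h\equiv p$, hence $g^n\to p$ locally uniformly on $\mathbb{D}$. The delicate part throughout is the limiting argument in Wolff's lemma; once that is secured, the rest is a routine application of Schwarz-Pick together with normal families.
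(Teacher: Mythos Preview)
The paper does not prove this theorem; it is stated as a classical preliminary result with a reference to Carleson--Gamelin \cite[Thm.~IV.3.1]{carlesongamelin}. So there is no ``paper's own proof'' to compare against. That said, your outline follows the standard Wolff--Denjoy strategy, and most of it is sound; let me flag one genuine gap.

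In the case without interior fixed point, your final step is too quick. You assert that ``any subsequential limit $h$ satisfies $h(\mathbb{D})\subset \bigcap_{R>0}\overline{H_R(p)}=\{p\}$'', but Wolff's lemma only yields $g\bigl(H_R(p)\bigr)\subset H_R(p)$, hence $h\bigl(H_R(p)\bigr)\subset \overline{H_R(p)}$ for each $R$. For a point $z\notin H_R(p)$ you have no information placing $h(z)$ inside $\overline{H_R(p)}$, so the intersection argument does not go through as written. What the horocycle invariance \emph{does} give directly is that any \emph{constant} subsequential limit lying on $\partial\mathbb{D}$ must equal $p$ (since for every $R$ the orbit of any $z\in H_R(p)$ stays in $H_R(p)$, and $\overline{H_R(p)}\cap\partial\mathbb{D}=\{p\}$). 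You still need to rule out non-constant limits. The usual way is: if $h=\lim g^{n_k}$ were non-constant, pass to a further subsequence so that $g^{n_{k+1}-n_k}\to\psi$; then $h=\psi\circ h$, forcing $\psi=\mathrm{id}$ on the open set $h(\mathbb{D})$ and hence on $\mathbb{D}$. Schwarz--Pick then forces $g$ to be a disk automorphism, and fixed-point-free automorphisms (parabolic or hyperbolic) are handled directly. Alternatively, one argues via strict Schwarz--Pick contraction that $d\bigl(g^n(z),g^n(w)\bigr)$ is strictly decreasing when $g$ is not an automorphism, which again forces all limits to be constant. Either way, the passage from Wolff's lemma to convergence is not quite the one-liner you present; once you insert this missing piece, the argument is complete.
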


For a more precise description of the dynamics, the concepts of {\em fundamental set} and {\em absorbing domain} are needed.
Although sometimes both notions are used interchangeably, we shall make a distinction between them.
	
\begin{defi}{\bf (Absorbing domain)}
	A domain $ V\subset U $ is said to be an \textit{absorbing domain} for $ f $ in $ U $ if  $ f(V)\subset V $ and for every compact set $ K\subset U $ there exists $ n\geq 0 $ such that $ f^n(K)\subset V $.
\end{defi}
\begin{defi}{\bf (Fundamental set)}
	Let $ U $ be a domain in $ \mathbb{C} $ and let $ f\colon U \to U $ be a holomorphic map.
	An absorbing domain $ V\subset U $ is said to be a \textit{fundamental set} for $ f $ in $ U $ if it is  simply connected and $ f_{|V} $ is univalent.
\end{defi}

Clearly, fundamental sets are absorbing domains, but the converse is not true. The existence of fundamental sets (and, therefore, of absorbing domains) is ensured by Cowen's theorem. Moreover,  his results leads to a classification of the self-maps of $ \mathbb{D} $ having the Denjoy-Wolff point in $\partial \mathbb{D} $  in terms of the dynamics in the fundamental sets.
\begin{thm}{\bf (Cowen's classification of self-maps of $ \mathbb{D} $, {\normalfont \cite{cowen}})}\label{teo-cowen}
	Let $ g$ be a holomorphic self-map of  $ \mathbb{D} $ with Denjoy-Wolff point $ p\in\partial \mathbb{D} $. Then, there exists a set $ V\subset\mathbb{D} $, a domain $ \Omega $ equal to $ \mathbb{C} $ or $ \mathbb{H}=\left\lbrace \textrm{\em Re }z>0\right\rbrace  $, a holomorphic map $ \psi\colon \mathbb{D}\to\Omega  $, and a Möbius transformation $ T\colon\Omega\to\Omega $, such that: \begin{enumerate}[label={\em (\alph*)}]
		\item $ V $ is a fundamental set for $ g $ in $ \mathbb{D} $,
		\item $ \psi(V) $ is a fundamental set for $ T $ in $ \Omega $,
		\item $ \psi\circ g=T\circ \psi $ in $ \mathbb{D} $,
		\item $ \psi $ is univalent in $ V $.
	\end{enumerate}
	
	Moreover, up to a conjugacy of $ T $ by a Möbius transformation preserving $ \Omega $, one of the following three cases holds:\begin{itemize}
		\item $ \Omega= \mathbb{C} $, $ T=\textrm{id}_\mathbb{C} +1 $ {\em (doubly parabolic type)},
				\item $ \Omega= \mathbb{H} $, $ T=\lambda\textrm{id}_\mathbb{H}$, for some $ \lambda>1 $ {\em (hyperbolic type)},
		\item $ \Omega= \mathbb{H} $, $ T=\textrm{id}_\mathbb{H} \pm1 $ {\em (simply parabolic type)}.
	\end{itemize}
\end{thm}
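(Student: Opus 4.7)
The plan is to reduce to a half-plane model, construct an absorbing univalent region by iterating $g$, and then build the semiconjugacy $\psi$ as a limit of suitably normalized iterates, following the classical Koenigs linearization scheme adapted to accommodate both the hyperbolic and the parabolic regimes.

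First, after a rotation putting the Denjoy-Wolff point at $p = 1$, I would conjugate by the Cayley map $C(z) = (1+z)/(1-z)$ to pass from $\mathbb{D}$ to the right half-plane $\mathbb{H}$, turning $g$ into a holomorphic self-map $\tilde g$ of $\mathbb{H}$ with Denjoy-Wolff point at $\infty$. By the Julia-Wolff-Carathéodory theorem, the angular derivative $\mu := \tilde g'(\infty)$, defined as the non-tangential limit of $\tilde g(z)/z$ as $z \to \infty$, exists and lies in $[1, \infty)$. The case $\mu > 1$ will yield the hyperbolic conclusion with $\lambda = \mu$, while $\mu = 1$ splits into the two parabolic subcases. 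Next, I would produce the fundamental set $V$ from the natural absorbing neighborhoods of $\infty$, namely the half-planes $H_R = \{\mathrm{Re}\, w > R\}$: the Julia inequality gives $\tilde g(H_R) \subset H_{\mu R - K}$ for a uniform constant $K$, so for $R$ sufficiently large $\tilde g$ sends $H_R$ into itself. To promote absorption to univalence, the Schwarz-Pick lemma shows that the hyperbolic diameter of each fiber of $\tilde g^n|_{H_R}$ tends to zero, so one may take $V$ to be a connected component of $\tilde g^N(H_R)$ for $N$ large, pulled back through $C^{-1}$ to $\mathbb{D}$.

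Third, I would construct $\psi$ by iterating and normalizing. In the hyperbolic case ($\mu > 1$), set $\psi_n(z) := \mu^{-n}\tilde g^n(z)$; a contraction estimate in the hyperbolic metric of $V$ combined with the defining property of $\mu$ shows that $(\psi_n)$ is locally uniformly Cauchy on $\mathbb{D}$, and its limit $\psi$ satisfies $\psi \circ g = \mu\cdot\psi$, whence $\Omega = \mathbb{H}$ and $T(w) = \mu w$. In the parabolic case ($\mu = 1$) this scaling degenerates, and I would instead use a two-point normalization
\[
\psi_n(z) \;:=\; \frac{\tilde g^n(z) - \tilde g^n(z_0)}{\tilde g^n(z_1) - \tilde g^n(z_0)},
\]
with fixed basepoints $z_0, z_1 \in V$, extracting a limit $\psi$ via Montel's theorem. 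The two-point normalization prevents $\psi$ from being constant, and the conjugating map $T$ must be a Möbius transformation of the image domain $\Omega = \psi(\mathbb{D})$ fixing $\infty$, hence affine; the Denjoy-Wolff condition then forces $T$ to be a translation, which after rescaling is $w \mapsto w \pm 1$.

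The main obstacle is the parabolic dichotomy between $\Omega = \mathbb{C}$ (doubly parabolic) and $\Omega = \mathbb{H}$ (simply parabolic). This is governed by Pommerenke's hyperbolic-step dichotomy: the sequence $\sigma_n(z) := \rho_\mathbb{D}(g^n(z), g^{n+1}(z))$ is non-increasing in $n$, and $\sigma_n \to 0$ exactly in the doubly parabolic case. The delicate step is proving that when $\sigma_n \to 0$ the forward $T$-orbit of $\psi(V)$ expands to fill all of $\mathbb{C}$, while when $\lim \sigma_n > 0$ it fills only a half-plane. This requires careful Koebe-type distortion estimates for the iterates $\tilde g^n$ on horocyclic regions, both to rule out degeneration of $\psi$ and to match the geometric size of $\psi(V)$ under iteration of $T$ with the target domain $\Omega$; closing this gap is the core of Cowen's argument.
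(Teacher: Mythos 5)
The paper does not prove this statement; it is quoted directly from Cowen \cite{cowen}, so your sketch has to be measured against Cowen's published argument. Your route is in any case not his: Cowen constructs $\psi$ abstractly, by forming the direct limit of the system $V\xrightarrow{g}V\xrightarrow{g}\cdots$ over a fundamental set, uniformizing the resulting simply connected Riemann surface as $\mathbb{C}$ or $\mathbb{H}$, and reading off $T$ as the induced automorphism, whose Möbius classification then yields the trichotomy. What you propose is the older Valiron--Pommerenke--Baker route via normalized iterates. That route is legitimate, but as written your sketch has a concrete gap before the hard analysis even starts.

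The gap is the fundamental set. Julia's lemma does make the horodisks $H_R=\{\mathrm{Re}\,w>R\}$ forward invariant (indeed $\mathrm{Re}\,\tilde g(w)\geq\mu\,\mathrm{Re}\,w$), but when $\mu=1$ they are in general \emph{not} absorbing, and absorption is part of what item (a) asserts and is also what you later need to conclude that $\psi(V)$ is fundamental for $T$. Orbits converge to the Denjoy--Wolff point $\infty$ but may do so tangentially with bounded real part: for the parabolic automorphism $\tilde g(w)=w+i$, which falls squarely under the simply parabolic case, $\mathrm{Re}\,\tilde g^n(w)$ is constant in $n$, so no compact set lying to the left of $\{\mathrm{Re}\,w=R\}$ ever enters $H_R$, and neither $H_R$ nor a component of $\tilde g^N(H_R)$ is a fundamental set. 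The correct construction builds $V$ out of hyperbolic neighbourhoods of the forward orbit of a base point, with radii growing to infinity, using the Schwarz--Pick monotonicity of $\rho(g^n(z),g^n(z_0))$ for absorption and a separate argument for eventual injectivity; your "hyperbolic diameter of each fiber tends to zero" claim does not by itself deliver univalence of $g$ on an absorbing set. A second, smaller reservation: in the parabolic case the two-point normalization only pins $\psi_n$ down at $z_0,z_1$; to obtain a non-constant limit satisfying $\psi\circ g=T\circ\psi$ you must show that the affine maps linking $\psi_n\circ\tilde g$ to $\psi_{n+1}$ converge and that no subsequential limit degenerates elsewhere --- precisely the Baker--Pommerenke estimates, which you correctly flag as the core but do not supply. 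So the proposal is a reasonable outline of a known alternative proof, with its hardest steps deferred and with one step, the construction of $V$, that fails as stated.
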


In view of this theorem, we say that a Baker domain $ U $ (or $ f_{|U} $) is of \textit{doubly parabolic}, \textit{hyperbolic} or \textit{simply parabolic} type if the same holds for the associated inner function.

Assuming that $ f $ has finite degree on $ U $, as in our example, $ g $ is always finite Blaschke product, so $ g $ is well-defined and holomorphic in $ \partial \mathbb{D} $, and $ g(\partial\mathbb{D})=\partial\mathbb{D} $. 

When the degree of $ f_{|U} $ is infinite, the associated inner function does not extend holomorphically to all points in $ \partial \mathbb{D} $, which complicates considerably the definition of a dynamical system in $ \partial\mathbb{D} $. Since this is not the case of our example, we shall skip the details and refer to \cite{doering-mañé} for a wide exposition on the topic.

\subsection*{Behaviour of the Riemann map}
Let $ U\subsetneq\mathbb{C} $ be a simply connected domain and let $ \varphi\colon\mathbb{D}\to U $ be a Riemann map. The behaviour of the Riemann map in the boundary plays a crucial role for describing $ \widehat{\partial} U $, and hence $ \partial U $. Here we state some basic definitions and results which we use. A general exposition on the topic can be found in \cite{pommerenke}, \cite[Sect. 17]{milnor}.

By Carathéodory's Theorem, $ \varphi $ extends continuously to $ \overline{\mathbb{D}} $ if and only if $ \widehat{\partial} U $ is locally connected. When this is not the case, radial limits, radial cluster sets and cluster sets replace the notion of image for points in $ \partial \mathbb{D}$, and are the key tools to study the behaviour of  $ \varphi $ in $ \partial\mathbb{D} $.

\begin{defi}
	Let $ \varphi\colon \mathbb{D}\to U $ be a Riemann map and let $ e^{i\theta}\in\partial \mathbb{D} $. 
	\begin{itemize}
		\item The {\em radial limit} of $ \varphi $ at $ e^{i\theta} $ is defined to be $\varphi^* (e^{i\theta} )\coloneqq\lim\limits_{r\to 1^-}\varphi(re^{i\theta})$.
		\item The {\em radial cluster set} $ Cl_\rho(\varphi, e^{i\theta}) $ of $ \varphi $ at $ e^{i\theta} $ is defined as the set of values $ w\in\widehat{\mathbb{C}}$ for which there is an increasing sequence $ \left\lbrace t_n\right\rbrace _n \subset (0,1) $ such that $ t_n\to 1 $ and $\varphi (e^{i\theta}t_n)\to w $, as $ n\to\infty $.
		\item The {\em cluster set} $ Cl(\varphi, e^{i\theta}) $ of $ \varphi $ at $ e^{i\theta} $ is the set of values $ w\in\widehat{\mathbb{C}} $ for which there is a sequence $ \left\lbrace z_n\right\rbrace _n \subset\mathbb{D}$ such that $ z_n\to e^{i\theta} $ and $\varphi (z_n)\to w $, as $ n\to\infty $.
	\end{itemize}
\end{defi}

The well-known Fatou, Riesz and Riesz Theorem on radial limits (\cite[Thm. 17.4]{milnor}) states that the radial limit $ \varphi^*(e^{i\theta}) \in \widehat{\partial} U $ exists for Lebesgue almost every $ \theta $;  but, if we fix any particular point $ v\in \widehat{\partial} U $, then the set of $ e^{i\theta}\in\partial\mathbb{D} $ such that $ \varphi^*(e^{i\theta}) =v$ has Lebesgue measure zero.

The cluster set $ Cl(\varphi, e^{i\theta}) $ can be seen to be equivalent to the impression of the prime end of $ U $ corresponding to  $ e^{i\theta} $ by the Riemann map $ \varphi $ (\cite[Thm. 2.16]{pommerenke}). Therefore, we use both notions indistinguishably.

When $ \widehat{\partial} U $ is non-locally connected, accessible points and accesses play an important role, since not all points in $ \partial U $ can be reached from inside $ U $. They can be characterized by means of the Riemann map.
\begin{defi}{\bf (Accessible point)}
	Given an open subset $ U\subset\widehat{\mathbb{C}} $, 
	a point $ v\in\widehat{\partial} U $ is \textit{accessible} from $ U $ if there is a path $ \gamma\colon \left[ 0,1\right) \to U $ such that $ \lim\limits_{t\to 1} \gamma(t)=v $. We also say that $ \gamma $ \textit{lands} at $ v $.
\end{defi}

Moreover, we say that a curve $ \gamma\colon\left[ 0,1\right) \to U $ lands at $ +\infty $ (resp., $ -\infty $), if  $ \textrm{Re }\gamma(t) \to+\infty$ (resp. $ -\infty $), as $ t\to 1 $, and  $ \textrm{Im }\gamma (t) $ is bounded for $ t\in \left[ 0,1\right)  $.

\begin{defi}{\bf (Access)}
	Let $ z_0\in U $ and let $ v\in \widehat{\partial} U $ be an accessible point. A homotopy class (with fixed endpoints) of curves $ \gamma\colon \left[ 0,1\right] \to\widehat{\mathbb{C}} $ such that $ \gamma(\left[ 0,1\right))\subset U $, $ \gamma(0)=z_0 $ and $ \gamma(1)=v $ is called an \textit{access} from $ U $ to $ v $.
\end{defi}
\begin{thm}{\bf (Correspondence Theorem, \normalfont{\cite{bfjk15}}\bf )}\label{correspondence-theorem} Let $ U\subset\widehat{\mathbb{C}} $ be a simply-connected domain, $ \varphi\colon\mathbb{D}\to U $ a Riemann map, and let $ v\in\widehat{\partial} U $. Then, there is a one-to-one correspondence between accesses from $ U $ to $ v $ and the points $ e^{i\theta}\in \mathbb{D} $ such that $ \varphi^*(e^{i\theta})=v  $. The correspondence is given as follows.\begin{enumerate}[label={\em (\alph*)}]
		\item If $ \mathcal{A} $ is an access to $ v\in\widehat{\partial} U $, then there is a point $ e^{i\theta}\in\partial \mathbb{D} $ with $ \varphi^*(e^{i\theta})=v  $. Moreover, different accesses correspond to different points in $ \partial\mathbb{D} $.
		\item If, at a point $ e^{i\theta}\in\partial\mathbb{D} $, the radial limit $ \varphi^* $ exists and it is equal to $ v\in \widehat{\partial} U $, then there exists an access $ \mathcal{A} $ to $ v $. Moreover, for every curve $ \eta\subset \mathbb{D} $ landing at $ e^{i\theta} $, if $ \varphi(\eta) $ lands at some point $ w\in\widehat{\mathbb{C}} $, then $ w=v $ and $ \varphi(\eta)\in \mathcal{A} $.
	\end{enumerate}
\end{thm}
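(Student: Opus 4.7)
The plan is to prove both directions of the correspondence using Lindelöf's theorem on boundary limits of bounded holomorphic functions, together with standard Koebe-type distortion estimates for Riemann maps. (If $ \infty\in U $, one first post-composes $ \varphi $ with a Möbius transformation to make it bounded, so these theorems apply without change.) The thread running through both parts is that $ \varphi $ transports landing of curves from $ \mathbb{D} $ to $ U $ and back, and that the ambiguity one worries about on the boundary is controlled by the almost-everywhere existence of radial limits and the rigidity of their level sets.

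For part (b), given $ e^{i\theta}\in\partial\mathbb{D} $ with $ \varphi^*(e^{i\theta})=v $, an access is produced by taking the image under $ \varphi $ of the radial segment $ r\mapsto re^{i\theta} $ and concatenating it with a fixed curve from $ z_0 $ to $ \varphi(0) $ in $ U $; this yields a curve landing at $ v $ and thus defines an access $ \mathcal{A} $. For the \emph{moreover} claim, suppose $ \eta\subset\mathbb{D} $ is a curve landing at $ e^{i\theta} $ and $ \varphi(\eta) $ lands at some $ w\in\widehat{\mathbb{C}} $; Lindelöf's theorem applied to $ \varphi $ along $ \eta $ forces $ w=\varphi^*(e^{i\theta})=v $. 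To conclude $ \varphi(\eta)\in\mathcal{A} $, I would construct a homotopy with fixed endpoints between $ \eta $ and the radial segment inside the simply connected set $ \mathbb{D}\cup\{e^{i\theta}\} $ and push it forward by $ \varphi $ to a homotopy in $ U\cup\{v\} $ between $ \varphi(\eta) $ and the image of the radial segment.

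For the existence part of (a), given an access $ \mathcal{A} $ represented by $ \gamma\colon[0,1]\to\widehat{\mathbb{C}} $ with $ \gamma([0,1))\subset U $ and $ \gamma(1)=v $, set $ \eta\coloneqq\varphi^{-1}\circ\gamma\colon[0,1)\to\mathbb{D} $. Since $ \gamma(t)\to v\in\partial U $, the curve $ \eta(t) $ accumulates only on $ \partial\mathbb{D} $. I would show $ \eta $ lands at a single point via a Koebe cross-cut argument: if the cluster set of $ \eta $ contained two distinct points of $ \partial\mathbb{D} $, one could extract subarcs of $ \eta $ forming cross-cuts of $ \mathbb{D} $ whose $ \varphi $-images (being subarcs of $ \gamma $ near its endpoint) have diameter tending to zero, while the arcs they cut off on $ \partial\mathbb{D} $ retain length bounded away from zero, contradicting the Koebe distortion estimate. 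Once $ \eta $ lands at a unique $ e^{i\theta} $, Lindelöf's theorem gives $ \varphi^*(e^{i\theta})=v $.

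For the injectivity claim, suppose two distinct accesses $ \mathcal{A}_1\ne\mathcal{A}_2 $ corresponded to the same $ e^{i\theta} $. Representative curves $ \gamma_1,\gamma_2 $ would pull back to curves $ \eta_1,\eta_2 $ both landing at $ e^{i\theta} $; one then builds a homotopy with fixed endpoints between $ \eta_1 $ and $ \eta_2 $ in the simply connected set $ \mathbb{D}\cup\{e^{i\theta}\} $ and pushes it forward by $ \varphi $ to a homotopy in $ U\cup\{v\} $ between $ \gamma_1 $ and $ \gamma_2 $, contradicting $ \mathcal{A}_1\ne\mathcal{A}_2 $. I expect the main obstacle to be precisely the continuity of this pushforward at the boundary parameter, where the homotopy value is $ e^{i\theta} $ on the $ \mathbb{D} $-side and must be $ v $ on the $ U $-side: this calls for building the homotopy so that each intermediate curve approaches $ e^{i\theta} $ in a controlled (essentially radial) way in local coordinates, so that Lindelöf can be applied uniformly in the homotopy parameter. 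Alternatively, this step can be circumvented by appealing to the prime-end machinery, in which accesses correspond directly to principal points of prime ends, and prime ends to points of $ \partial\mathbb{D} $ via Carathéodory's theorem.
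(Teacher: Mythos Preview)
The paper does not give its own proof of this statement: it is quoted in the Preliminaries section as a known result from \cite{bfjk15}, and is used as a black box later on (for instance, to reformulate Theorem~C in terms of radial limits). So there is no in-paper argument to compare against.

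That said, your outline is the standard route and is essentially how the result is proved in the cited reference and in Pommerenke's book: Lindel\"of's theorem to identify landing points with radial limits, and a cross-cut/diameter-shrinking argument (your ``Koebe cross-cut argument'') to show that the preimage $\eta=\varphi^{-1}\circ\gamma$ of a curve landing on $\widehat{\partial}U$ must land at a single point of $\partial\mathbb{D}$. You also correctly isolate the one genuinely delicate step, namely the continuity at the boundary parameter of the pushed-forward homotopy in $U\cup\{v\}$. Your proposed fix---arranging the homotopy so that intermediate curves approach $e^{i\theta}$ non-tangentially, so that Lindel\"of applies uniformly in the homotopy parameter---is exactly what is done in practice; the alternative via prime ends is also standard and is the route taken in \cite{pommerenke}. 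Either way, your sketch is sound.
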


\subsection*{Harmonic measure}
The Riemann map $ \varphi\colon \mathbb{D}\to U $ induces a measure in $ \widehat{\partial} U $, the harmonic measure, which is the appropriate one when dealing with the boundaries of Fatou components. Indeed, we now define harmonic measure in $ \widehat{\partial} U $ in terms of the pullback under a Riemann map of the normalized measure on the unit circle $ \partial\mathbb{D} $, following the approach of \cite[Chapter 7]{mesuraharmonica}.
\begin{defi}{\bf (Harmonic measure)}
	Let $ U\subsetneq{\mathbb{C}} $ be a simply connected domain, $ z\in U $, and let $ \varphi\colon\mathbb{D}\to U $ be a Riemann map, such that $ \varphi(0)=z\in U $. Let $ (\partial\mathbb{D}, \mathcal{B}, \lambda) $ be the measure space on $ \partial \mathbb{D} $ defined by $ \mathcal{B} $, the Borel $ \sigma $-algebra of $ \partial \mathbb{D} $, and $ \lambda $, its normalized Lebesgue measure. Consider the measurable space $ (\widehat{\partial}U, \mathcal{B}_U) $, where  $ \mathcal{B}_U $ is the $ \sigma $-algebra defined as\[\mathcal{B}_U\coloneqq \left\lbrace B\subset\partial U  \colon (\varphi^*)^{-1}(B)\in\mathcal{B} \right\rbrace .\] Then, given $ B\in  \mathcal{B}_U$, the \textit{harmonic measure at $ z $ relative to $ U $} of the set $ B $ is defined as:\[\omega_U(z, B)\coloneqq\lambda ((\varphi^*)^{-1}(B)).\]
\end{defi}

We note that the $ \sigma $-algebra $ \mathcal{B}_U $ defined in $ \widehat{\partial} U$ does not depend on the chosen Riemann map $ \varphi $ nor on the base point $ z $. Indeed, any two Riemann maps $ \varphi_1,\varphi_2\colon\mathbb{D}\to U $ are equal up to precomposition with an automorphism of $ \mathbb{D} $, and automorphisms of $ \mathbb{D} $ send Borel sets of $\partial \mathbb{D}  $ to Borel sets of $\partial \mathbb{D}  $. Hence, if for some Riemann map $ \varphi\colon\mathbb{D}\to U $, it holds $ (\varphi^*)^{-1}(B)\in\mathcal{B} $, then it also holds for all Riemann maps $ \varphi\colon\mathbb{D}\to U $.

We also note that the definition of $ \omega_U(z, \cdot) $ is independent of the choice of $ \varphi$, provided it satisfies $ \varphi(0)=z $, since $ \lambda $ is invariant under rotation.

We refer to \cite{harmonicmeasure2, pommerenke, mesuraharmonica} for equivalent definitions and further properties of the harmonic measure. We only need the following simple fact.

\begin{lemma}{\bf (Sets of zero and full harmonic measure)}
	Let $ U\subsetneq{\mathbb{C}} $ be a simply connected domain. Consider the measure space $ (\widehat{\partial} U, \mathcal{B}_U) $ defined above, and let $ B\in  \mathcal{B}_U$. If there exists $ z_0\in U $ such that $ \omega_U(z_0, B)=0 $ (resp. $ \omega_U(z_0, B)=1 $), then $ \omega_U(z, B)=0 $ (resp. $ \omega_U(z, B)=1 $) for all $ z\in U $.
	In this case, we say that the set $ B $ has {\em zero} (resp. {\em full}) {\em harmonic measure relative to $ U $}, and we write $ \omega_U(B)=0 $ (resp. $ \omega_U(B)=1 $).
\end{lemma}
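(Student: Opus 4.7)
The plan is to compare harmonic measures at two different base points by comparing the Riemann maps that define them. Fix $ z_0,z\in U $ and let $ \varphi_0,\varphi\colon \mathbb{D}\to U $ be Riemann maps with $ \varphi_0(0)=z_0 $ and $ \varphi(0)=z $. By the uniqueness part of the Riemann Mapping Theorem, there exists an automorphism $ \tau\in\mathrm{Aut}(\mathbb{D}) $ such that $ \varphi=\varphi_0\circ\tau $. Since $ \tau $ is a Möbius transformation of the form $ \tau(w)=e^{i\alpha}(w-a)/(1-\overline{a}w) $ with $ |a|<1 $, it extends holomorphically across $ \overline{\mathbb{D}} $, and in particular restricts to a real-analytic diffeomorphism $ \tau\colon\partial\mathbb{D}\to\partial\mathbb{D} $ whose derivative is bounded above and below away from zero.

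Next I would relate the radial limits. By Fatou's theorem, $ \varphi_0^* $ exists almost everywhere on $ \partial\mathbb{D} $. Since $ \tau $ is a diffeomorphism of $ \partial\mathbb{D} $ that preserves Lebesgue null sets, the set where $ \varphi_0^*\circ\tau $ is defined is also of full measure, and by continuity of $ \tau $ the radial limits satisfy
\[
\varphi^*(\zeta)=(\varphi_0\circ\tau)^*(\zeta)=\varphi_0^*(\tau(\zeta)) \qquad\text{for a.e. }\zeta\in\partial\mathbb{D}.
\]
Consequently, for every $ B\in\mathcal{B}_U $,
\[
(\varphi^*)^{-1}(B) \; =\; \tau^{-1}\bigl((\varphi_0^*)^{-1}(B)\bigr)
\]
up to a set of Lebesgue measure zero.

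To finish, I would use that $ \tau $ is a bi-Lipschitz homeomorphism of $ \partial\mathbb{D} $, so it preserves both Lebesgue null sets and Lebesgue full-measure sets. Therefore
\[
\lambda\bigl((\varphi^*)^{-1}(B)\bigr)=0 \iff \lambda\bigl((\varphi_0^*)^{-1}(B)\bigr)=0,
\]
and similarly with $ 0 $ replaced by $ 1 $. Unwinding the definitions, this is precisely the statement $ \omega_U(z,B)=0 \iff \omega_U(z_0,B)=0 $, and the same equivalence for full harmonic measure. Since $ z,z_0\in U $ were arbitrary, the lemma follows.

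The argument is essentially bookkeeping, and the only conceptual point to be careful about is the identity of radial limits $ \varphi^*=\varphi_0^*\circ\tau $ almost everywhere, which I expect to be the main (mild) obstacle. It relies on the fact that $ \tau $ is smooth across the boundary, so that radial approach to $ \zeta $ in $ \mathbb{D} $ is mapped by $ \tau $ to a curve approaching $ \tau(\zeta) $ \emph{non-tangentially}, allowing us to invoke non-tangential convergence of $ \varphi_0 $ at $ \tau(\zeta) $ (which holds wherever the radial limit exists, by Lindelöf's theorem applied to the bounded function $ \varphi_0 $ composed with a conformal collar of $ \partial\mathbb{D} $).
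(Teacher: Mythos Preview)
Your argument is correct. The paper does not actually supply a proof of this lemma; it is stated as a ``simple fact'' and left unproved. Your approach is precisely the natural one, and in fact the paper itself sketches the key ingredient in the paragraph immediately preceding the lemma, where it explains that $\mathcal{B}_U$ is independent of the Riemann map because any two Riemann maps differ by an automorphism of $\mathbb{D}$, which sends Borel sets to Borel sets. Your proof simply carries this observation one step further to the measure-theoretic level: automorphisms of $\mathbb{D}$ restrict to smooth diffeomorphisms of $\partial\mathbb{D}$ and hence preserve Lebesgue null sets and full-measure sets.

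The one point you flag as the ``mild obstacle''---the identity $\varphi^*=\varphi_0^*\circ\tau$ almost everywhere---is handled correctly. Your reasoning via non-tangential limits is sound; an alternative, perhaps cleaner, justification is that conformal automorphisms of $\mathbb{D}$ map Stolz angles to Stolz angles, so the non-tangential limit of $\varphi_0\circ\tau$ at $\zeta$ coincides with the non-tangential limit of $\varphi_0$ at $\tau(\zeta)$ wherever the latter exists. Either way, the conclusion stands.
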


Finally, we note that, by the Fatou, Riesz and Riesz Theorem stated above, it holds $ \omega_U(\left\lbrace \infty\right\rbrace )=0 $. Hence, for every measurable set $ B\subset\widehat{\partial} U $ and $ z\in U $, we have \[\omega_U(z,B)=\omega_U(z, B\cap\mathbb{C}).\]
\subsection*{Dynamics on the boundary of Baker domains}
For transcendental entire functions, \textit{Baker domains} are defined as periodic Fatou components in which iterates converge uniformly towards infinity, the essential singularity of the function. Such Fatou components have been widely studied
\cite{rippon-stallard-familiesbakersI, rippon-stallard-familiesbakersII, fh,konig, rippon-bakerdomainsmeromorphic,rippon-bakerdomains, bfjk15-absorbing, bfjk19}, although here we only state the results we need to deal with our example.

Without loss of generality, let us assume that $ U $ is an invariant Baker domain. 
Since all points in  $ U $ escape to $ \infty $ under iteration, $ U $ is clearly unbounded and infinity is accessible from it. Indeed, given any point $ z\in U $ and a curve joining $ z $ and $ f(z) $ within $ U $, then the curve $ \gamma\coloneqq\cup_{n\geq0} f^n(\gamma) $ is unbounded and lands at infinity, defining an access which is called the {\em dynamical access to infinity}. 

\begin{remark}{\bf (Classification of Baker domains)}\label{remark-bakers}
	Cowen's classification (Thm. \ref{teo-cowen}) implies that dynamics inside a Baker domain can be eventually conjugate to $ \textrm{id}_\mathbb{C}+1 $, or to $ \lambda\textrm{id}_\mathbb{H} $, $ \lambda>1 $, or to $ \textrm{id}_\mathbb{H}+1 $. Indeed, all types are possible (\cite{konig}) and thus, this gives a classification of Baker domains. Let us remark that this is not the case for parabolic basins, which can be proved to be always of doubly parabolic type using extended Fatou coordinates (see e.g. \cite[Sect. 10]{milnor}).
\end{remark}

The following results describe the boundary of Baker domains, both from a topological and dynamical points of view.
\begin{thm}{\bf (Boundary of doubly parabolic Baker domains, {\normalfont \cite{baker-dominguez, bargmann}})}
Let $ f\colon\mathbb{C}\to\mathbb{C} $ and $ U $ be a Baker domain of $ f $ of doubly parabolic type. Let $ \varphi\colon\mathbb{D}\to U $ be a Riemann map. Then, \[\overline{\left\lbrace e^{i\theta}\colon \varphi^*(e^{i\theta})=\infty\right\rbrace} =\partial  \mathbb{D}.\] In particular, $ \partial U $ is non-locally connected.
\end{thm}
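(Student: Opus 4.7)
The plan is to combine three ingredients: the correspondence between the Denjoy–Wolff point $p$ of the associated inner function $g$ and the point $\infty \in \widehat{\partial} U$, the density of iterated $g$-preimages of $p$ in $\partial\mathbb{D}$, and the boundary semiconjugacy, which transforms preimages of $p$ into radial preimages of $\infty$.

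First I would identify $p$ with $\infty$ under $\varphi^*$. Fix $z_0 \in U$ and a Jordan arc $\eta \subset U$ from $z_0$ to $f(z_0)$. The concatenation $\gamma \coloneqq \bigcup_{n \geq 0} f^n(\eta)$ is an arc in $U$ tending to $\infty$, realizing the dynamical access. Its lift $\widetilde{\gamma} \coloneqq \varphi^{-1}(\gamma) \subset \mathbb{D}$ is a curve whose successive pieces are $g^n$-images of $\varphi^{-1}(\eta)$, so by Theorem \ref{teo-dw} it lands at the Denjoy–Wolff point $p$. Applying Theorem \ref{correspondence-theorem}(b) gives $\varphi^*(p) = \infty$.

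Next I would establish the density of the backward orbit of $p$ in $\partial\mathbb{D}$ and transfer it through $\varphi^*$. In the finite-degree setting (which covers the example of the paper), $g$ is a finite Blaschke product of degree at least two, extending holomorphically across $\partial\mathbb{D}$; its Julia set on $\partial\mathbb{D}$ is the entire unit circle, so $\bigcup_{n \geq 0} g^{-n}(p)$ is dense in $\partial\mathbb{D}$. In the general doubly parabolic case one invokes instead the ergodicity and recurrence of the boundary map $g^*$ to obtain the same density. To transfer: the functional equation $\varphi \circ g = f \circ \varphi$ on $\mathbb{D}$ passes to the boundary at points where the radial limits are compatible, giving $\varphi^*(g(\zeta)) = f(\varphi^*(\zeta))$. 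If $g^n(\zeta) = p$ and $\varphi^*(\zeta) = w \in \mathbb{C}$, then iterating would yield $f^n(w) = \infty$, impossible since $f$ is entire. Hence $\varphi^*(\zeta) = \infty$ for every $\zeta \in \bigcup_{n \geq 0} g^{-n}(p)$.

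Combining these steps, $\{e^{i\theta} : \varphi^*(e^{i\theta}) = \infty\}$ contains a dense subset of $\partial\mathbb{D}$, giving the first assertion. For the second, I would argue by contradiction using Carathéodory's theorem: if $\widehat{\partial}U$ were locally connected, $\varphi$ would extend continuously to $\overline{\mathbb{D}}$, so $\{\varphi = \infty\} \cap \partial\mathbb{D}$ would be closed; density then forces $\varphi(\partial\mathbb{D}) = \{\infty\}$, contradicting $\widehat{\partial}U \neq \{\infty\}$. The main obstacle is making the pointwise boundary commutation rigorous: in the general (in particular infinite-degree) setting, $g$ need not extend continuously to every point of $\partial\mathbb{D}$, and one must work at points where the radial limits of $\varphi$ exist and where the functional equation propagates cleanly. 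This is the technical heart of the Baker–Domínguez and Bargmann arguments and would require the most care in a complete write-up.
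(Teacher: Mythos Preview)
The paper does not give its own proof of this theorem: it is stated in Section~\ref{sect-2-prelim} as a preliminary result quoted from \cite{baker-dominguez, bargmann}, with no argument supplied. So there is nothing in the paper to compare your proposal against.

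That said, your outline is essentially the argument of the cited references. The three steps---identifying the Denjoy--Wolff point $p$ with the dynamical access to $\infty$ via the Correspondence Theorem, proving density of $\bigcup_n g^{-n}(p)$ in $\partial\mathbb{D}$, and showing that each such preimage has radial limit $\infty$---are exactly the structure of the Baker--Dom\'inguez proof. Your treatment of the finite-degree case (where $g$ is a Blaschke product of degree $\geq 2$, hence $\mathcal{J}(g)=\partial\mathbb{D}$) is correct, and the Lindel\"of-type argument for the boundary semiconjugacy $\varphi^*\circ g = f\circ\varphi^*$ works because $g$ extends conformally across $\partial\mathbb{D}$, so $r\mapsto g(re^{i\theta})$ approaches $g(e^{i\theta})$ non-tangentially. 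You are also right that the infinite-degree case is where the technical weight lies; Bargmann's contribution is precisely to handle the general doubly parabolic situation, where density of preimages of $p$ and the boundary commutation both require more care. Your contradiction for non-local connectivity via Carath\'eodory is the standard conclusion.
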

\begin{thm}{\bf (Dynamics on the boundary of Baker domains, {\normalfont \cite{rippon-stallard, bfjk19}})}\label{teo-dynamics-boundary-baker}
Let $ f\colon \mathbb{C}\to\mathbb{C} $ be a transcendental entire function and $ U $ be a Baker domain of $ f $, such that $ f_{|U} $ has finite degree. Then, the following holds.
\begin{enumerate}[label={\em (\alph*)}]
	\item If $ U $ is hyperbolic or simply parabolic, then $ \mathcal{I}(f)\cap\partial U $ (the set of escaping points in  $\partial U$) has full harmonic measure.
		\item If $ U $ is doubly parabolic type, then  the set of points in $ \partial U $ whose orbit is dense in $ \partial U $ has full harmonic measure. In particular, $ \mathcal{I}(f)\cap\partial U $ has zero harmonic measure.
\end{enumerate}
\end{thm}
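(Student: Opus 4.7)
The plan is to prove both statements by transferring ergodic properties of the inner function $g=\varphi^{-1}\circ f\circ \varphi$ to boundary dynamics of $f$ on $\partial U$ via the Riemann map $\varphi$. Because $f_{|U}$ has finite degree, $g$ is a finite Blaschke product, hence extends holomorphically across $\partial\mathbb{D}$ and acts there as a smooth covering map of $\partial\mathbb{D}$ onto itself. The first step is to set up a measure-theoretic semiconjugacy on the boundary: by Lindel\"of-type arguments applied to radial segments, for Lebesgue-a.e.\ $\zeta\in\partial\mathbb{D}$ the radial limits $\varphi^*(g^{*n}(\zeta))$ exist for every $n\geq 0$, and whenever they are finite the identity
\[
\varphi^*(g^{*n}(\zeta))=f^n(\varphi^*(\zeta))
\]
holds. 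Since by definition of $\omega_U$ the boundary map $\varphi^*$ pushes normalised Lebesgue measure on $\partial\mathbb{D}$ forward to harmonic measure on $\widehat{\partial}U$, and since $\omega_U(\{\infty\})=0$, this identity is enough to translate Lebesgue-a.e.\ statements about the dynamical system $(g^*,\partial\mathbb{D})$ into harmonic-a.e.\ statements about $(f,\partial U)$.

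For part (a), in the hyperbolic case the angular derivative satisfies $g'(p)<1$ at the Denjoy-Wolff point, so $p$ is an attracting fixed point of the boundary map $g|_{\partial\mathbb{D}}$ whose basin has full Lebesgue measure; in the simply parabolic case $g'(p)=1$ but $p$ remains a global attractor of $g|_{\partial\mathbb{D}}$ in the sense that $g^{*n}(\zeta)\to p$ for Lebesgue-a.e.\ $\zeta$ (this can be read off from Cowen's linearising coordinate $\psi\colon\mathbb{D}\to\mathbb{H}$ conjugating $g$ to $T(z)=\lambda z$ or $T(z)=z\pm 1$, where every non-fixed point on $\partial\mathbb{H}$ escapes to the Denjoy-Wolff fixed point of $T$). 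Since $p$ corresponds via $\varphi$ to the dynamical access to $\infty$, any non-tangential approach to $p$ has radial limit $\infty$ under $\varphi$; combining with the semiconjugacy gives $f^n(\varphi^*(\zeta))\to\infty$ for a.e.\ $\zeta$, and hence $\omega_U(\mathcal{I}(f)\cap\partial U)=1$.

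For part (b), the doubly parabolic case, the decisive input is the ergodic theorem for inner functions due to Aaronson (formulated for Blaschke products by Doering-Ma\~n\'e): a finite Blaschke product of doubly parabolic type is conservative, ergodic and recurrent with respect to normalised Lebesgue measure on $\partial\mathbb{D}$, and in particular the forward orbit $\{g^{*n}(\zeta)\}_n$ is dense in $\partial\mathbb{D}$ for Lebesgue-a.e.\ $\zeta$. Combining with the boundary semiconjugacy and the fact (proved in the excerpt) that $\{\zeta\in\partial\mathbb{D}\colon\varphi^*(\zeta)=\infty\}$ is dense in $\partial\mathbb{D}$, one deduces that for harmonic-a.e.\ $w\in\partial U$ the orbit $\{f^n(w)\}$ is dense in $\partial U$; dense orbits are plainly non-escaping, which yields $\omega_U(\mathcal{I}(f)\cap\partial U)=0$. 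The main obstacle throughout is that $\partial U$ is non-locally connected, so $\varphi$ has no continuous boundary extension and topology cannot be transported through $\varphi^*$ pointwise. The delicate step is verifying that the exceptional Lebesgue-null sets where the radial-limit identity or the semiconjugacy break down really do correspond to harmonic-null subsets of $\partial U$, and that density and escape survive the measure-theoretic transfer; controlling this carefully (particularly the way orbits in $\partial\mathbb{D}$ meet the dense set of preimages of $\infty$) is where the hardest work lies.
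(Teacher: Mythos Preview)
The paper does not prove this theorem: it appears in the Preliminaries section as a cited result from \cite{rippon-stallard} and \cite{bfjk19}, stated without proof. So there is no ``paper's own proof'' to compare against; your proposal is an attempt to reconstruct the argument from the original references.

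That said, a brief comment on the sketch itself. The overall architecture---reducing to the inner function $g$, using that $g$ is a finite Blaschke product, and transferring ergodic information on $\partial\mathbb{D}$ through $\varphi^*$---is indeed the strategy of the cited papers. However, your justification for part~(b) has a gap. You write that density of $g^*$-orbits in $\partial\mathbb{D}$, together with density of $(\varphi^*)^{-1}(\infty)$ in $\partial\mathbb{D}$, yields density of $f$-orbits in $\partial U$. But $\varphi^*$ is not continuous, so density in $\partial\mathbb{D}$ does not push forward to density in $\partial U$. The correct argument (as in \cite{bfjk19}) is measure-theoretic: every relatively open set $W\subset\partial U$ has positive harmonic measure, hence $(\varphi^*)^{-1}(W)$ has positive Lebesgue measure in $\partial\mathbb{D}$; ergodicity of $g^*$ then forces Lebesgue-a.e.\ $g^*$-orbit to visit $(\varphi^*)^{-1}(W)$, and the semiconjugacy carries this to the $f$-orbit visiting $W$. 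A countable basis for the topology of $\partial U$ finishes it. The density of $(\varphi^*)^{-1}(\infty)$ is not what drives this step. Similarly, in part~(a) the appeal to Cowen's linearisation to show that a.e.\ boundary orbit converges to $p$ in the simply parabolic case is too loose: Cowen's map $\psi$ is univalent only on a fundamental set inside $\mathbb{D}$, not on $\partial\mathbb{D}$, and one really needs the Doering--Ma\~n\'e dichotomy (or the Pommerenke/Aaronson boundary classification) to conclude non-recurrence and hence a.e.\ convergence to the Denjoy--Wolff point.
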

\subsection*{Indecomposable continua}
Finally, we include the definition of indecomposable continuum and the following result, which gives a sufficient condition for the accumulation set of a curve to be an indecomposable continuum. Here, we shall understand simple curve as the continuous, one-to-one image of the  non-negative real numbers.
\begin{defi}{\bf (Indecomposable continuum)}
	We say that $ X\subset \widehat{\mathbb{C}} $ is a {\em continuum } if it is compact and connected.
	A continuum is {\em indecomposable} if it cannot be expressed as the union of two
proper subcontinua.
\end{defi}
\begin{thm}{\bf (Curry, {\normalfont \cite[Thm. 8]{curry}})}\label{teo-curry}
	Let $ X $ be a one-dimensional non-separating plane continuum which is the closure of a simple curve  that limits upon itself. Then $ X $ is indecomposable.
\end{thm}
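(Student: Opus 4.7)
The plan is to argue by contradiction. Suppose $X$ is decomposable, so $X = A \cup B$ with $A, B$ proper subcontinua of $X$. A classical corollary of Janiszewski's theorem, applicable because $\widehat{\mathbb{C}} \setminus X$ is connected by hypothesis, ensures that $C := A \cap B$ is itself connected, giving the partition $X = (A\setminus C) \sqcup C \sqcup (B\setminus C)$ with both $A\setminus C$ and $B \setminus C$ nonempty.

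Let $\gamma \colon [0, \infty) \to X$ be a continuous injection with $\overline{\gamma([0,\infty))} = X$ that limits upon itself. The limiting condition is equivalent to every tail $\gamma([t, \infty))$ being dense in $X$: otherwise its closure would be a proper subcontinuum of $X$ and $\gamma$ would fail to accumulate on the remaining points from arbitrarily far along the parameter. Consequently $\gamma$ must visit both $A \setminus C$ and $B \setminus C$ at arbitrarily large parameter values, and by continuity each transit between them passes through the connected compact set $C$.

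The heart of the argument is converting this infinite weaving of the injective curve $\gamma$ across $C$ into a topological contradiction with one-dimensionality and non-separation. The route I would take follows Curry's approach: first establish (as a lemma) that in any decomposable one-dimensional non-separating plane continuum, there exist two \emph{disjoint} subcontinua $H, K \subset X$ and an $\varepsilon > 0$ such that both $H$ and $K$ are $\varepsilon$-dense in $X$. Density of the tails of $\gamma$ then forces $\gamma$ to approximate both $H$ and $K$ within $\varepsilon/2$ infinitely often. Using planarity together with non-separation, so that $\widehat{\mathbb{C}} \setminus X$ is a simply connected domain whose accesses to points of $X$ are governed by prime ends, the injective ray $\gamma$ cannot indefinitely oscillate between $\varepsilon$-neighbourhoods of two disjoint compacta inside a one-dimensional continuum without either producing a self-intersection or tracing out arcs that disconnect the complement.

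The main obstacle is exactly the disjoint $\varepsilon$-dense subcontinua lemma: producing genuine subcontinua (rather than arbitrary closed $\varepsilon$-dense subsets) with this property in a general decomposable non-separating plane continuum is delicate and is the technical core of \cite{curry}. Once this lemma is in hand, the contradiction follows from the weaving argument sketched above, combining the injectivity of $\gamma$, the density of its tails, and the prime-end structure of the simply connected complement $\widehat{\mathbb{C}} \setminus X$.
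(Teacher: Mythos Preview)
The paper does not prove this statement: Theorem~\ref{teo-curry} is quoted from \cite[Thm.~8]{curry} in the preliminaries and is used as a black box in Corollary~\ref{corol-indec}. There is therefore no ``paper's own proof'' to compare your proposal against.

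As for your proposal itself, it is not a proof but an outline that explicitly defers the crucial step back to Curry. You correctly identify the overall architecture (decompose, intersect, weave the ray through the intersection), but you then concede that the key lemma --- the existence of two disjoint $\varepsilon$-dense subcontinua in a decomposable one-dimensional non-separating plane continuum --- is ``the technical core of \cite{curry}'' and you do not supply it. Without that lemma, and without a precise argument for why the weaving of an injective ray between neighbourhoods of two disjoint compacta is impossible in this setting, what remains is a plausibility sketch rather than a proof. If your intent was simply to record why the cited result is believable, the sketch is reasonable; if it was meant to stand as an independent proof, the gap is exactly the lemma you flagged.
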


\section{Basic properties of the dynamics of $ f $}\label{sec-3-dynamics-f}

In this section we gather some of the properties of the function $ f(z)=z+e^{-z}$, as well as its dynamics, which are used recurrently during the proofs of the main theorems. First, we include a quick description of the general dynamics of $ f $, summarizing the ideas of \cite{baker-dominguez, fh}. From there, it will be deduced that only the study of $ f $ on the strip $ S\coloneqq \left\lbrace z\in\mathbb{C}\colon \left| \textrm{Im }z\right| \leq\pi\right\rbrace  $ is needed.

\subsection*{General dynamics of $ f $}

To give a first approach to the dynamics, one may consider the semiconjugacy $ w=e^{-z}$ between $ f(z)=z+e^{-z} $ and $ h(w)=we^{-w} $. Observe that $ w=0 $ is a fixed point of multiplier 1, and $ h(w)=w-w^2+\mathcal{O}(w^3) $ near 0, implying that 0 is a parabolic fixed point having one attracting and one repelling direction. From the fact that $ \mathbb{R} $ is invariant by $ h $ and from the action of $ h $ in $ \mathbb{R} $, it is deduced that the repelling direction is $ \mathbb{R}_- $, which belongs to the Julia set $ \mathcal{J}(h) $, and the attracting direction is $ \mathbb{R}_+$, which belongs to the immediate parabolic basin of 0, and hence to the Fatou set $ \mathcal{F}(h) $. See Figure \ref{fig-h}. We denote by $ \mathcal{A}_0 $ the immediate basin of 0.

We note that all preimages of $ \mathbb{R}_-$ are in the Julia set and, since 0 is an asymptotic value, they separate the plane into infinitely many components. It follows that the Fatou set $ \mathcal{F}(h) $ has infinitely many connected components.

		\begin{figure}
		\centering
		\begin{subfigure}[b]{0.4\textwidth}
			\centering
			\includegraphics[width=\textwidth]{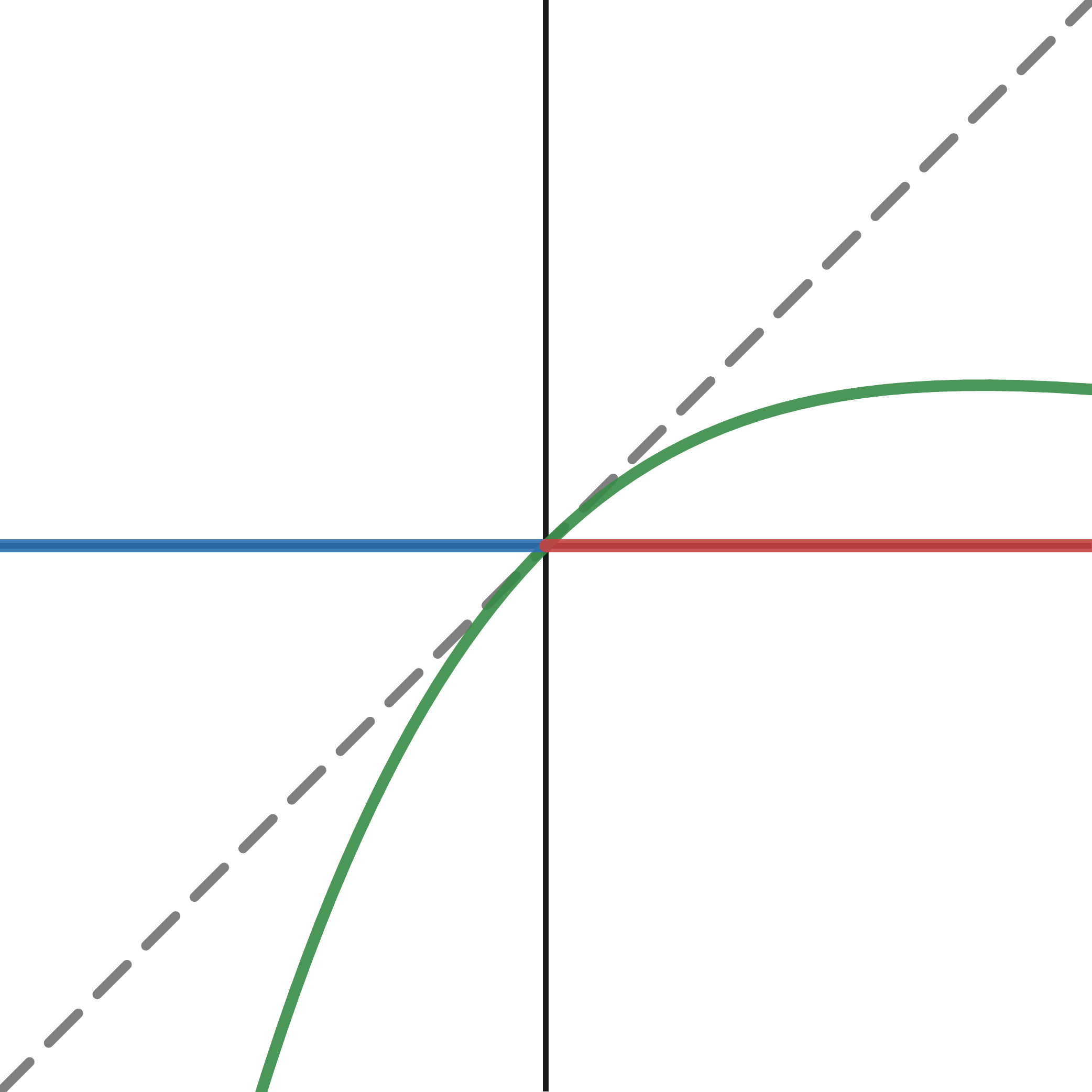}
			\setlength{\unitlength}{\textwidth}
			\put(-0.98, 0.18){\footnotesize$ y=x $}
			\put(-0.27, 0.59){\footnotesize$ h(x)=xe^{-x}$}
			\caption{}
			\label{fig1}
		\end{subfigure}
		\hspace{1.5cm}
		\begin{subfigure}[b]{0.4\textwidth}
			\centering
			\includegraphics[width=\textwidth]{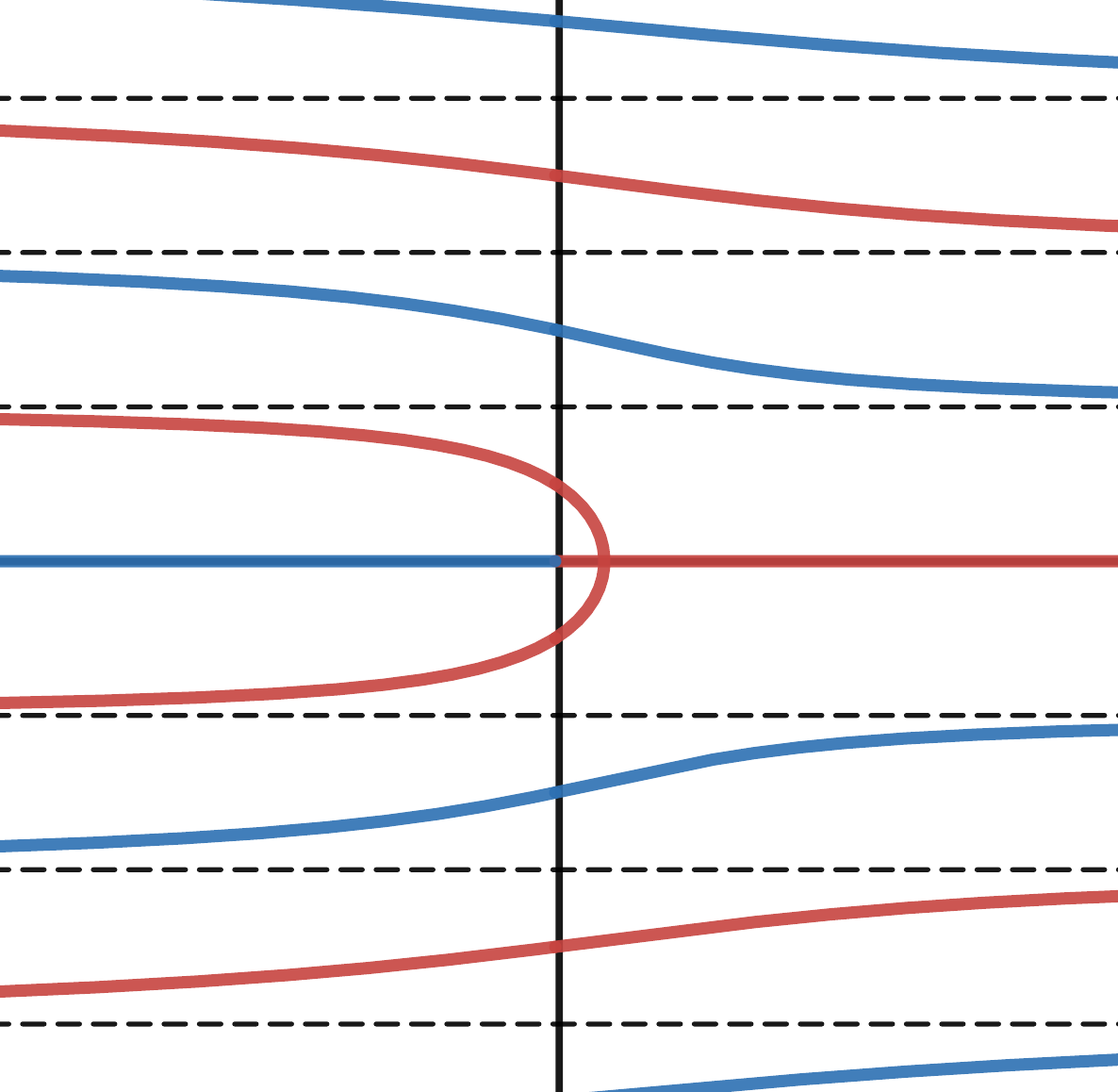}
			\setlength{\unitlength}{\textwidth}
			\put(-0.04, 0.48){\footnotesize$\mathbb{R}$}
			\put(-0.12, 0.57){\footnotesize$\mathbb{R}+\pi i$}
			\put(-0.12, 0.345){\footnotesize$\mathbb{R}-\pi i$}
			\put(-0.15, 0.21){\footnotesize$\mathbb{R}-2\pi i$}
			\put(-0.15, 0.07){\footnotesize$\mathbb{R}-3\pi i$}
			\put(-0.15, 0.71){\footnotesize$\mathbb{R}+2\pi i$}
			\put(-0.15, 0.85){\footnotesize$\mathbb{R}+3\pi i$}
			\put(-0.45, 0.48){\footnotesize$1$}
			\caption{}
			\label{fig2}
		\end{subfigure}

		\caption{\footnotesize In the left, Figure \ref{fig1} shows the plot of the real function $ h(x) = xe^{-x} $
			(green), together with the diagonal $ y = x $ (grey dotted line). The point 
			$ x=0 $ is a parabolic fixed point. Points in $ \mathbb{R}_+ $ (red) are attracted to 0, so $ \mathbb{R}_+\subset \mathcal{A}_0\subset\mathcal{F}(h) $, while points in $ \mathbb{R}_- $ (blue) converge to $ -\infty $ exponentially
			fast and $ \mathbb{R}_-\subset \mathcal{J}(h) $.  In the right, Figure \ref{fig2} shows in red the preimages of the positive real line $ \mathbb{R}_+ $; and, in blue, the preimages of the negative real
			line $ \mathbb{R}_- $. By the invariance of the Fatou and the Julia sets, all red lines are contained in the Fatou set, while
			the blue ones are in the Julia set. One deduces that the immediate parabolic basin $ \mathcal{A}_0 $ is contained in the region
			bounded by the two blue lines lying in the strips $ \left\lbrace \pi < y < 2\pi\right\rbrace  $ and $ \left\lbrace -2\pi < y < -\pi\right\rbrace  $ respectively.}\label{fig-h}
	\end{figure}

 It is not hard to see that the only two singular values of $ h $ are 0 and $ e^{-1} $, the latter being contained in the immediate parabolic basin $ \mathcal{A}_0 $. Therefore, the Fatou set $ \mathcal{F}(h) $ is precisely $ \mathcal{A}_0 $ and its preimages under $ h $. Indeed, since $ h $ has only a finite number of singular values, it cannot have Baker nor wandering domains (\cite[Sect. 5]{eremenko-lyubich}), and the presence of any other invariant Fatou component (either a basin or a Siegel disk) would require an additional singular value (see e.g. \cite[Thm. 7]{bergweiler}). Since there are infinitely many Fatou components for $ h $, $ \mathcal{A}_0 $ has infinitely many preimages, separated by the preimages of $ \mathbb{R}_- $. 

We lift these results to the dynamical plane of $ f $, using Bergweiler's result (\cite{bergweiler95}), which ensures that the Fatou and Julia sets of $ f $ and $ h $ are in correspondence under the projection $ w=e^{-z} $.  Preimages of $ \mathbb{R}_+ $ under $ e^{-z} $, which are precisely the forward invariant horizontal lines $ \left\lbrace \textrm{Im }z=2k\pi i\right\rbrace_{k\in\mathbb{Z}}  $, are in the Fatou set and their points escape to $ \infty $ to the right. Preimages of $ \mathbb{R}_- $ under the exponential projection are the forward invariant horizontal lines $ \left\lbrace \textrm{Im }z=(2k+1)\pi i\right\rbrace_{k\in\mathbb{Z}}  $, which are in the Julia set and whose points escape to $ -\infty $  exponentially fast. The horizontal strips $  S_k\coloneqq \left\lbrace (2k-1)\pi\leq\textrm{Im }z\leq(2k+1)\pi\right\rbrace $ contain a preimage $ U_k $ of $ \mathcal{A}_0 $ which, in turn, contains a preimage of $ \mathbb{R}_+ $ under $ e^{-z} $, that is $ \left\lbrace \textrm{Im }z=2k\pi i\right\rbrace  $. Such horizontal line is forward invariant, so this implies that $ U_k $ is forward invariant and iterates tend to $ \infty $, so $ U_k $ is a Baker domain. 

Moreover, we note that $ \mathcal{F}(f) $ is precisely the union of these Baker domains $ U_k $ and their preimages under $ f $. Indeed, any Fatou component $ V $ of $ f $ must project by $ w=e^{-z} $ to a preimage of $ \mathcal{A}_0 $, implying that $ V $ is mapped to some $ U_k $ in a finite number of steps. Hence, the presence of wandering domains is ruled out.

Finally, we note that the function $ f $ satisfies the relation $ f(z+2k\pi i)= f(z)+2k\pi i $, for all $ z\in\mathbb{C} $ and $ k\in\mathbb{Z} $, so it is enough to study it in the central strip $ S\coloneqq S_0 $ and the corresponding Baker domain $ U\coloneqq U_0 $. To do so, 
we consider the conformal branch of the semiconjugacy $ w=e^{-z} $, defined on $ \textrm{Int} S $, i.e.\[E(z)\coloneqq e^{-z}\colon\textrm{Int} S\longrightarrow \mathbb{C}\smallsetminus\mathbb{R}_-,\]\[E^{-1}(w)\coloneqq -\textrm{Log} (w)\colon\mathbb{C}\smallsetminus\mathbb{R}_-\longrightarrow \textrm{Int} S,\]where $ \textrm{Log}\colon\mathbb{C}\smallsetminus\mathbb{R}_-\to \textrm{Int} S $ denotes the principal branch of the logarithm. Since $ U\subset \textrm{Int } S $ and $ \mathcal{A}_0\subset \mathbb{C}\smallsetminus\mathbb{R}_- $, this gives a conformal conjugacy between $ f_{|U} $ and $ h_{|\mathcal{A}_0} $. Hence, we deduce that the Baker domain $ U $ is of doubly parabolic type and of degree two.

		\begin{figure}[htb!]\centering
	\includegraphics[width=15cm]{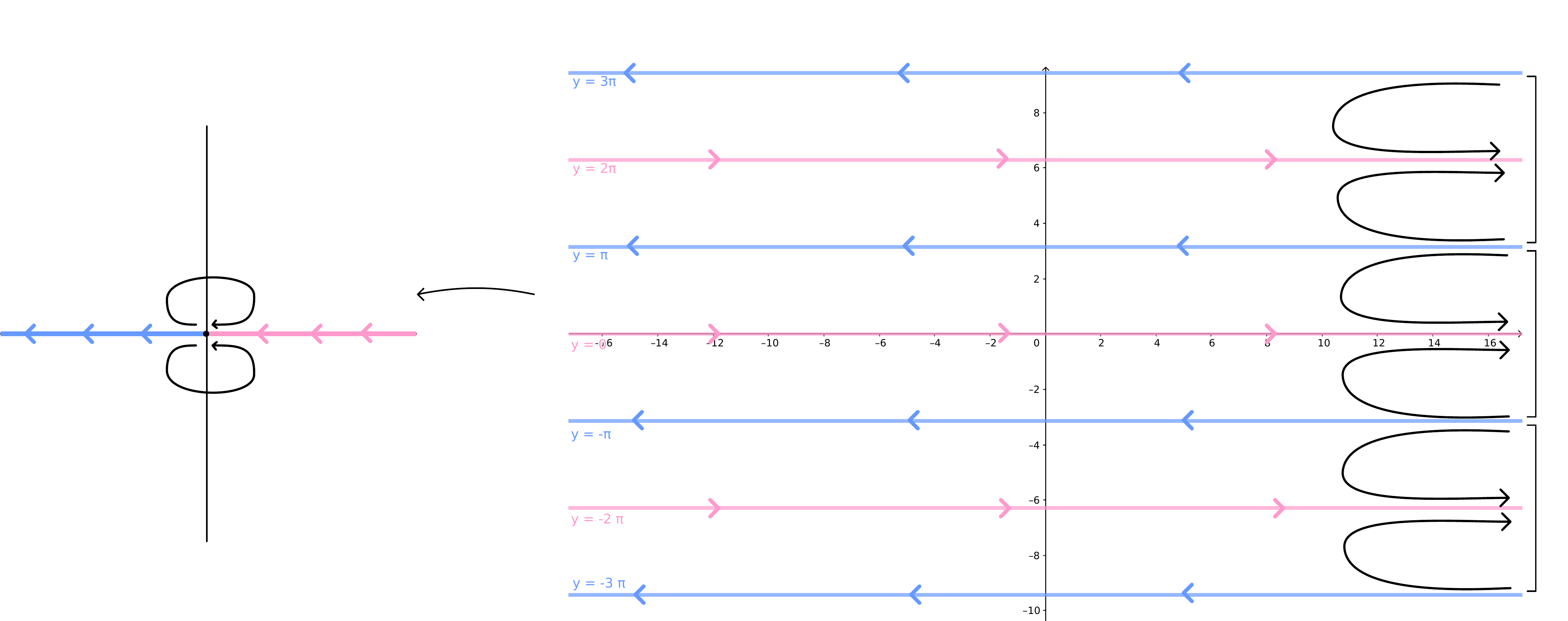}
	\setlength{\unitlength}{15cm}
	\put(-0.73, 0.22){\footnotesize $ w=e^{-z} $}
	\put(-0.94, 0.33){\footnotesize $ h(w)=we^{-w} $}
	\put(-0.4, 0.37){\footnotesize $ f(z)=z+e^{-z} $}
	\put(-0.015, 0.18){\footnotesize $  S_0 $}
	\put(-0.015, 0.29){\footnotesize $ S_1 $}
	\put(-0.015, 0.065){\footnotesize $  S_{-1} $}
	\caption{\footnotesize Schematic representation of the dynamics of $ h $ and $ f $ and how the exponential projection $ w=e^{-z} $ relates both of them. In the left, $ \mathbb{R}_+$ (in pink) is contained in the immediate parabolic basin $ \mathcal{A}_0 $. Its preimages by $ w=e^{-z} $, the lines $ \left\lbrace \textrm{Im }z=2k\pi i\right\rbrace_{k\in\mathbb{Z}}  $ (also in pink), lie each of them in a Baker domain $ U_k $. In blue, in the left there is $ \mathbb{R}_- \subset\mathcal{J}(h)$. Its preimages $ \left\lbrace \textrm{Im }z=(2k+1)\pi i\right\rbrace_{k\in\mathbb{Z}}  $ lie in $ \mathcal{J}(f) $ and separate the plane into the strips $ S_k $. }
\end{figure}
\begin{remark}
	Although working with the function $ h $ may seem easier, for having a finite number of singular values and being  postsingularly bounded, the fact that one asymptotic value lies in the Julia set reduces this advantage. In general, we shall work with $ f $, its logarithmic lift.
\end{remark}

\subsection*{Action of $ f $ in the strip $ S $}
As seen before, it is enough to consider $ f $ in the strip $ S=\left\lbrace z\in\mathbb{C}\colon \left| \textrm{Im }z\right| \leq \pi\right\rbrace  $, delimited by the horizontal lines $L^\pm\coloneqq\left\lbrace z\colon \textrm{Im }z=\pm \pi\right\rbrace $. See Figure \ref{fig-absorbing}.
	 \begin{figure}[htb]\centering
	\includegraphics[width=12cm]{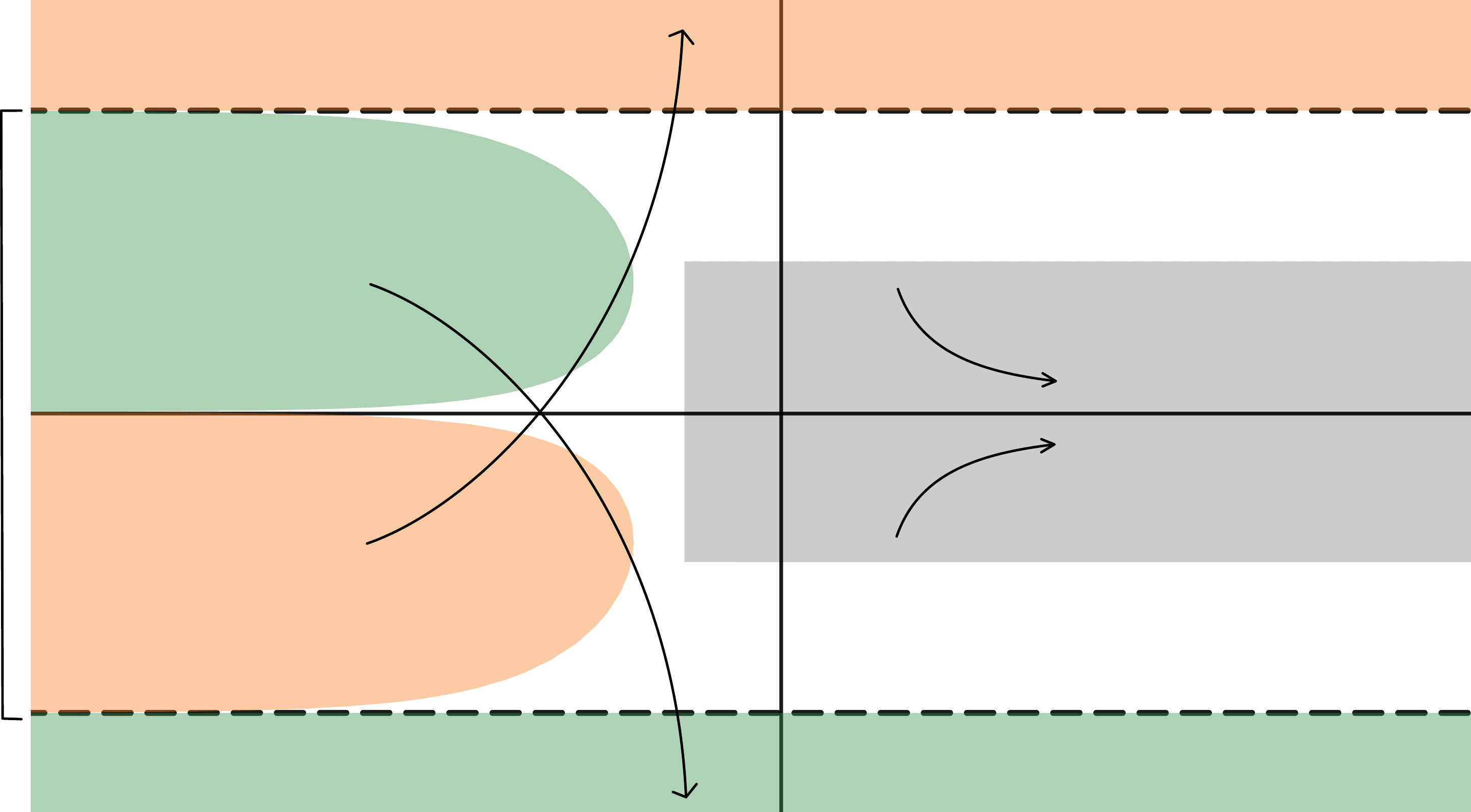}
	\setlength{\unitlength}{12cm}
	\put(-0.1, 0.32){\footnotesize $ V $}
	\put(-1.05, 0.28){\footnotesize $ S $}
	\put(0, 0.46){\footnotesize $ L^+ $}
	\put(-0.01, 0.06){ \footnotesize $ L^- $}
	\caption{\footnotesize Schematic representation of how $ f $ acts on the strip $ S $ and of the absorbing domain $ V $.}	\label{fig-absorbing}
\end{figure}

Observe that, to the left, $ f $ behaves like the exponential and, to the right, like the identity. Moreover, if one writes $ f $ as\[f(x,y)=(x+e^{-x}\cos y,y-e^{-x}\sin y),\]preimages of $ L^\pm $ can be computed explicitly as the curves of the form $ \left\lbrace y-e^{-x}\sin y=\pm \pi\right\rbrace  $. In $ S $ they consist precisely of two bent curves converging to $ -\infty $ in both ends, being asymptotic to $ \mathbb{R} $ and to $ L^\mp $ (see Fig. \ref{fig-absorbing}). The region delimited by these curves is mapped outside $ S $ in a one-to-one fashion.  On the other hand, the map $ f\colon f^{-1}(S)\cap S\to S $ is a proper map of degree two, which can be deduced for instance by computing the preimages of $ \mathbb{R} $ in $ S $.

Next, we define the set\[\widehat{S}\coloneqq \left\lbrace z\in S\colon f^n(z)\in S\textrm{, for all }n\right\rbrace .\]
Clearly, $ U\subset \widehat{S} $, since $ U $ is forward invariant under $ f $. Moreover, since both $ f\colon f^{-1}(S)\cap S\to S $ and $ f_{|U} $ have degree 2, there cannot be preimages  of  $ U $ in $ S $ other than itself.  Therefore, $ \mathcal{F}(f)\cap\widehat{S}=U $. On the other hand,  $\partial U\subset \mathcal{J}(f)\cap\widehat{S} $. The other inclusion, which is going to be proved in Proposition \ref{prop-caracteritzacio-frontera}, cannot be  claimed directly  to be true, for the possible existence of buried points in $ \widehat{S} $, i.e. points in $ \mathcal{J}(f) $ which are not eventually mapped to the boundary of any Baker domain $ U_k $.
\subsection*{Absorbing domains and expansion of $ f $}

Let us define the following set 
\[ V\coloneqq \left\lbrace z\in S\colon \textrm{Re }z>-1, \left| \textrm{Im }z\right| <\frac{\pi}{2} \right\rbrace.\]
\begin{lemma}
	The set $ V $ is an absorbing domain for $ f $ in $ U $.
\end{lemma}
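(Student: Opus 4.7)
Three items need verification: that $V\subset U$, that $f(V)\subset V$, and that every compact $K\subset U$ is eventually mapped into $V$; openness and convex connectedness of $V$ are clear from its definition as the intersection of two half-planes. I would begin with forward invariance, which reduces to a direct estimate. For $z=x+iy\in V$ one has $\textrm{Re }f(z)=x+e^{-x}\cos y>x>-1$ since $\cos y>0$ on $(-\pi/2,\pi/2)$, and by the conjugation symmetry $z\mapsto \overline{z}$ the only non-trivial bound is
\[
\pi/2 + y - e^{-x}\sin y \,>\, 0 \qquad \text{for } y\in(0,\pi/2),\ x>-1.
\]
Using $e^{-x}<e$ in this range, this reduces to showing $\phi(y):=\pi/2+y-e\sin y>0$ on $[0,\pi/2]$; a short calculus argument locates the unique interior minimum at $y^*=\arccos(1/e)$, where $\phi(y^*)=\pi/2+\arccos(1/e)-\sqrt{e^2-1}>0$ (this is exactly where the strict inequality $e<\pi$ is needed).

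Next I would prove $V\subset U$. Since $f^n(V)\subset V\subset \textrm{Int }S$, each $f^n_{|V}$ omits the two horizontal lines $L^\pm$, so Montel's theorem makes $\{f^n_{|V}\}_{n\geq 0}$ a normal family. Pointwise escape then follows from monotonicity: writing $z_n=f^n(z)=x_n+iy_n$, the sequence $x_n$ is strictly increasing since $x_{n+1}-x_n=e^{-x_n}\cos y_n>0$, hence converges to some $x^*\in(-1,+\infty]$. If $x^*<+\infty$, then $\cos y_n\to 0$ forces $|y_n|\to \pi/2$, while $|y_{n+1}-y_n|=e^{-x_n}|\sin y_n|\to e^{-x^*}\in(0,e)$; passing to a subsequence with $y_n\to +\pi/2$, one finds $y_{n+1}\to \pi/2-e^{-x^*}\in(-\pi/2,\pi/2)$ (again using $e<\pi$), contradicting $|y_{n+1}|\to \pi/2$. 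Hence $f^n(z)\to \infty$ pointwise on $V$, and together with normality this gives $V\subset \mathcal{F}(f)$; since $V$ is connected with $0\in V\cap U$, we conclude $V\subset U$.

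For the absorbing property I would transfer to the parabolic map $h(w)=we^{-w}$ through the conformal semiconjugacy $E(z)=e^{-z}\colon \textrm{Int }S\to \mathbb{C}\setminus \mathbb{R}_-$, which identifies $U$ with $\mathcal{A}_0$ and sends $V$ onto the half-disc $e^{-V}=\{w:\textrm{Re }w>0,\ 0<|w|<e\}$. Classical parabolic-basin theory at the fixed point $0$ of $h$ yields, for each sufficiently large $R$, an attracting petal $P_R=\{w:\textrm{Re}(1/w)>R\}\subset \mathcal{A}_0$ which absorbs every compact subset of $\mathcal{A}_0$ under iteration (the immediate basin is exhausted by preimages of $P_R$). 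For $R>1/e$ the petal $P_R$ is a disc of diameter $1/R<e$ tangent to the imaginary axis at $0$, and so $P_R\subset e^{-V}$. Thus, given a compact $K\subset U$, the set $E(K)\subset\mathcal{A}_0$ is compact and $h^n(E(K))\subset P_R$ for $n$ large; applying $E^{-1}=-\textrm{Log}$ yields $f^n(K)\subset V$.

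The main obstacle is the quantitative inequality $\phi(y^*)>0$ underlying forward invariance: it encodes the fact $e<\pi$, which is precisely what makes the cutoff $\textrm{Re }z>-1$ (equivalently $|w|<e$) admissible. Everything else is a routine combination of monotonicity, Montel-type normality, and the standard petal absorption in the immediate basin of the parabolic fixed point of $h$.
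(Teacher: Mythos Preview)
Your argument is correct and follows the same overall strategy as the paper: a direct estimate for forward invariance, and the conjugacy $E(z)=e^{-z}$ to the parabolic basin of $h(w)=we^{-w}$ for absorption. Two differences are worth flagging. First, your forward-invariance bound is sharper than the paper's: the paper writes the intermediate inequality $|\textrm{Im }f(x+iy)|<|\pi/2-e^{-x}|$, which is not actually valid for $e^{-x}$ close to $e$ (the minimum of $y-e^{-x}\sin y$ can dip below $\pi/2-e^{-x}$), whereas your reduction to $\phi(y^*)=\pi/2+\arccos(1/e)-\sqrt{e^2-1}>0$ is the genuine estimate, and it is indeed here that the numerical margin between $e$ and $\pi$ enters. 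Second, you insert a self-contained proof that $V\subset U$ via Montel and monotonicity of the real parts; the paper obtains this implicitly by declaring $E(V)$ a parabolic petal and citing the local theory. Your variant for absorption, embedding a standard petal $P_R=\{\textrm{Re}(1/w)>R\}$ with $R>1/e$ inside $E(V)$ rather than asserting $E(V)$ itself is a petal, is also cleaner. The net effect is a slightly longer but more rigorous version of the same proof.
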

\begin{proof}
	Clearly, $ V $ is open and connected. For the forward invariance, consider $ z=x+iy\in V $, so $ x>-1 $ and $ \left| y\right| <\frac{\pi}{2} $, then \[ \textrm{Re }f(x+iy)= x+e^{-x}\cos y>x>-1, \]
	\[ \left| \textrm{Im }f(x+iy)\right| <\left| \frac{\pi}{2}-e^{-x}\right| <\frac{\pi}{2}. \]
	Finally, the fact that $ V $ is absorbing, i.e. that all compact sets in $ U $ must eventually enter in $ V $, can be deduced from the dynamics on the conjugate parabolic basin $ \mathcal{A}_0 $.
	Indeed, $ E(V) $ is the following forward invariant set, \[E(V)=\left\lbrace w\in\mathbb{C}\colon \left| w\right| <\frac{1}{e}, \left| \textrm{arg }w\right| <\frac{\pi}{2}\right\rbrace .\] Observe that $ E(V) $ is an circular sector of angle $ \frac{\pi}{2} $ containing the real interval $ (0,e) $, which is in the attracting direction of the parabolic point $ w=0 $. Hence, $ E(V) $ is a parabolic petal (see e.g. \cite[p. 74]{Steinmetz+2011}), so all compact sets in $ \mathcal{A}_0 $ must eventually enter in $ E(V) $. Hence, applying back the conjugacy, we get that $ V $ is an absorbing domain for $ f $ in $ U $.
\end{proof}
\begin{remark} Since it contains the critical point 0, $ V $ is not a fundamental set. It can be turned into one making it smaller, for instance taking $ \left\lbrace z\in S\colon \textrm{Re }z>0, \left| \textrm{Im }z\right| <\frac{\pi}{2} \right\rbrace $. On the other hand, fundamental sets, and absorbing domains, can be chosen bigger, although we have no need to do that. In fact, using local theory around parabolic fixed points (see e.g. \cite[p. 74]{Steinmetz+2011}), there exist fundamental sets which approach tangentially $ L^\pm $. 
\end{remark}

One of the advantages of choosing $ V $ as we have done is that the map is expanding outside it (although not uniformly expanding). Indeed, a simple computation yields: \[f'(x+iy)=1-e^{-x}\cos y+i e^{-x}\sin y, \]\[\left| f'(x+iy)\right| =\sqrt{1+e^{-2x}-2e^{-x}\cos y}.\] Therefore, $ \left| f'(x+iy)\right| >1 $ if and only if $ e^{-x}-2\cos y>0 $. This last inequality is satisfied if $ \frac{\pi}{2}<\left| y\right| <\pi $ or if $ x<-1 $. Therefore, $ \left| f'(z)\right| >1 $ for all $ z\in S\smallsetminus \overline{V} $. 

Since $ S\smallsetminus \overline{V}$ is not convex, in order to apply the expansion of $ f $ as an augmenter of the distance between points, we need to consider a more appropriate distance than the Euclidean one. To this aim, we define the following metric. 

\begin{defi}{\bf($ \rho $-distance in $ S\smallsetminus\overline{V} $)}\label{def-rho-distance}
	Given $ z,w\in S\smallsetminus\overline{V} $, let us define its $ \rho $-distance as: \[\rho (z,w)\coloneqq\inf l (\gamma),\] where the infimum is taken over all paths $ \gamma\subset S\smallsetminus\overline{V} $ with endpoints $ z $ and $ w $, and $ l $ denotes the length of the path with respect to the Euclidean metric.
	
	Given a set $ K \subset S\smallsetminus\overline{V}$, we denote by $ \textrm{diam}_\rho (K) $ the diameter of $ K $  with respect to the $ \rho $-distance, i.e.\[\textrm{diam}_\rho (K) =\sup\limits_{x,y\in K} \rho(x,y).\]
\end{defi}

Observe that the Euclidean distance is always smaller than the $ \rho $-distance, i.e. 
\[\left| z-w\right| \leq \rho(z,w), \hspace{0.3cm}\textrm{for all }z,w\in S\smallsetminus\overline{V} , \] with equality if both $ z $ and $ w $ are contained in a convex subset of $ S\smallsetminus\overline{V} $.

Notice also that the $ \rho $-distance between two points can be arbitrarily large, although the Euclidean distance between them remains bounded. However, we are going to restrict the use of the $ \rho $-distance to particular subsets of $ S\smallsetminus\overline{V} $, where we do have an upper bound for the $ \rho $-distance in terms of the Euclidean one (see Lemma \ref{lemma-properties-Sij}).

\begin{remark} Let us observe that, instead of considering the dynamical system defined by $ f $ in $ \mathbb{C} $, we can restrict to the one given by $ f $ in $ \widehat{S} $. 
	For it we have a similar situation that the one for $ \lambda e^{z} $, $ 0<\lambda<\frac{1}{e} $, in \cite{goldberg-devaney}, and the corresponding generalization in \cite{baranski, baranski-karpinska}: a unique Fatou component which contains the postsingular set. Mainly, two things distinguish our situation from theirs. First, $ f $ in $ \widehat{S} $ has degree two, and the functions they are dealing with have infinite degree. Second, they have uniform expansion (at least in the logarithmic tracts), while our expansion is not uniform (compare with Prop. \ref{remark_expansion}). Hence, the results on next sections are meant to overcome this difficulty.
\end{remark}
\subsection*{Itineraries in $ \widehat{S} $ and symbolic dynamics}
Recall that $ f\colon f^{-1}(S)\cap S\to S $ has degree two and the critical value is 1. Therefore, the two branches of the inverse of $ f $ in $ S $, say $ \phi_0 $ and $ \phi_1 $, are well-defined in $ S\smallsetminus\left[ 1,+\infty\right)  $.
More precisely \[
\phi_0\colon S\smallsetminus\left[ 1,+\infty\right)\to \Omega_0\coloneqq S\cap\mathbb{H}_+, \]
\[
\phi_1\colon S\smallsetminus\left[ 1,+\infty\right)\to \Omega_1\coloneqq S\cap\mathbb{H}_- , \]
where $ \mathbb{H}_+ $ and $ \mathbb{H}_- $ denote the upper and the lower half plane, respectively (see Fig. \ref{fig-inverses}). \begin{figure}[htb!]\centering
	\includegraphics[width=15cm]{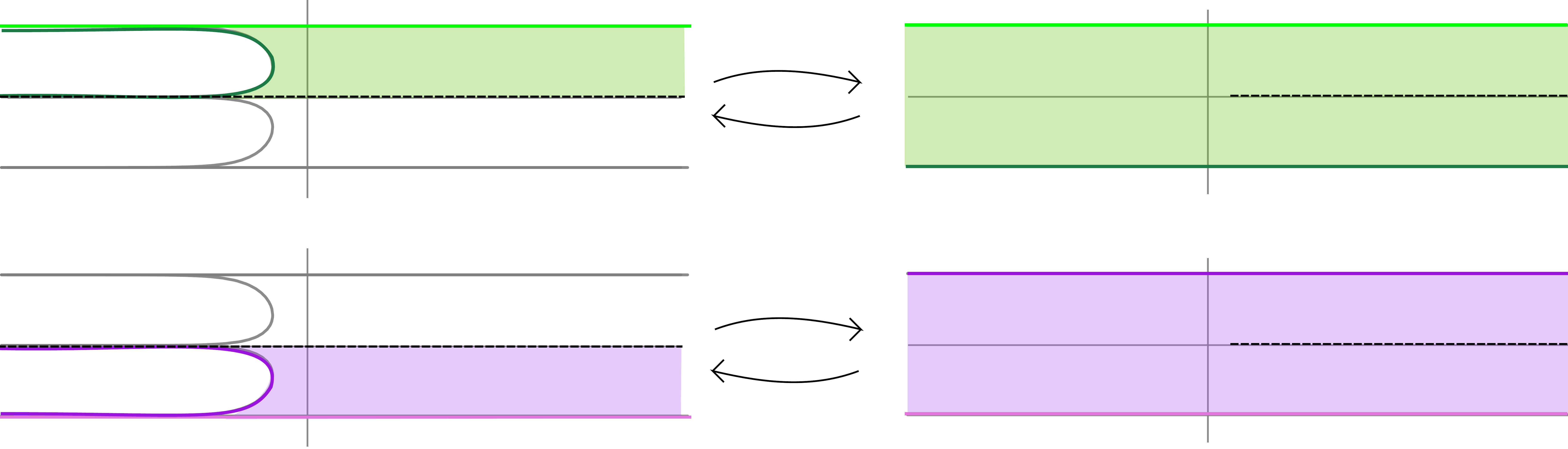}
	\setlength{\unitlength}{15cm}
	\put(-0.505, 0.25){\footnotesize $ f $}
	\put(-0.505, 0.185){\footnotesize $ \phi_0 $}
	\put(-0.505, 0.09){\footnotesize$ f $}
	\put(-0.505, 0.02){\footnotesize $ \phi_1 $}
		\put(-0.705, 0.24){\footnotesize $ \Omega_0 $}
	\put(-0.705, 0.195){\footnotesize $\Omega_1$}
	\put(-0.705, 0.08){\footnotesize$ \Omega_0 $}
	\put(-0.705, 0.03){\footnotesize $ \Omega_1 $}
	\caption{\footnotesize Action of the inverses $ \phi_0 $ and $ \phi_1 $ on the strip S.}	\label{fig-inverses}
\end{figure}

We claim that $ \phi_0 $ and $ \phi_1 $ do not increase the $ \rho $-distance between points, as shown in the following proposition.

\begin{prop}
	{\bf (Contraction and uniform contraction in $ S\smallsetminus \overline{V} $)}\label{remark_contract} 
		The following properties hold true. \begin{enumerate}[label={\em (\alph*)}]
		\item  Let $ z,w\in S\smallsetminus \overline{V} $. Then, for $ i\in\left\lbrace 0,1 \right\rbrace   $, \[\rho(\phi_i(z), \phi_i(w))\leq \rho(z,w).\]
		\item Let $ k\in \mathbb{R} $ and let $ S_k\coloneqq \left\lbrace z=x+iy\in S\smallsetminus \overline{V}\colon x\leq k\right\rbrace $. Then, there exists $ \lambda\coloneqq \lambda(k)<1 $ such that, if $ z,w\in S\smallsetminus \overline{V} $, then, for $ i\in\left\lbrace 0,1 \right\rbrace   $, \[\rho(\phi_i(z), \phi_i(w))\leq\lambda \rho(z,w).\]
		Moreover, if $ K\subset S_k $ is a compact set, then \[\textrm{\em diam}_\rho (\phi_i (K))\leq \lambda \textrm{\em diam}_\rho (K).\]
	\end{enumerate}
\end{prop}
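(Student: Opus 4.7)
Since $\phi_0,\phi_1$ are defined on $S\setminus[1,+\infty)$ and $[1,+\infty)\subset\overline V$, both branches are defined on $S\setminus\overline V$. I first check that each $\phi_i$ preserves this set: the forward invariance $f(V)\subset V$ established in the previous lemma extends by continuity to $f(\overline V)\subset\overline V$, so if $\phi_i(z)\in\overline V$ then $z=f(\phi_i(z))\in\overline V$, contradicting $z\in S\setminus\overline V$. Consequently $\phi_i(S\setminus\overline V)\subset S\setminus\overline V$. Then, for any path $\gamma\subset S\setminus\overline V$ from $z$ to $w$, the curve $\phi_i\circ\gamma$ lies in $S\setminus\overline V$ and has length
\[
l(\phi_i\circ\gamma)=\int_\gamma\frac{|d\zeta|}{|f'(\phi_i(\zeta))|}\leq l(\gamma),
\]
since $|f'|\geq 1$ on $S\setminus\overline V$. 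Taking the infimum over $\gamma$ completes part (a).

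\textbf{Plan for (b): pointwise bound.} My goal is to upgrade the non-strict estimate to a uniform strict bound $|\phi_i'|\leq\lambda<1$ on $S_k$, and then to ensure that almost-minimizing paths between $z,w\in S_k$ can be chosen inside $S_k$. For the pointwise bound I first show that there exists $M=M(k)$ with $\phi_i(S_k)\subset\{z\in S\setminus\overline V:\textrm{Re }z\leq M\}$: from $z=\phi_i(z)+e^{-\phi_i(z)}$, using that $\phi_i(z)\in S\setminus\overline V$ forces $\cos(\textrm{Im }\phi_i(z))\leq 0$ whenever $\textrm{Re }\phi_i(z)\geq -1$, I obtain $\textrm{Re }z\geq\textrm{Re }\phi_i(z)-e^{-\textrm{Re }\phi_i(z)}$, which forces $\textrm{Re }\phi_i(z)\leq M(k)$. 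Next, the explicit identity $|f'(x+iy)|^2=1+e^{-2x}-2e^{-x}\cos y$ gives a uniform lower bound $|f'|\geq c>1$ on that target region: where $|y|\geq\pi/2$ one has $\cos y\leq 0$ and hence $|f'|^2\geq 1+e^{-2M}$, while where $x\leq -1$ one has $|f'|\geq|e^{-x}-1|\geq e-1$. Setting $\lambda:=1/c$ then yields $|\phi_i'(z)|\leq\lambda$ for every $z\in S_k$.

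\textbf{Passing to the $\rho$-distance, and main obstacle.} The remaining subtlety, which I expect to be the main technical point, is that an almost-minimizing path $\gamma\subset S\setminus\overline V$ for $\rho(z,w)$ between $z,w\in S_k$ may a priori leave $S_k$ to the right. Here I rely on the topology of $S\setminus\overline V$: since $\overline V$ is unbounded to the right, the set $\{x>k\}\cap(S\setminus\overline V)$ splits (for $k>-1$) into two disjoint components lying in the upper and lower half-strips, not joinable inside $S\setminus\overline V$ without re-entering $\{x\leq k\}$. Any excursion of $\gamma$ into $\{x>k\}$ therefore begins and ends on the same vertical segment of $\{x=k\}\cap(S\setminus\overline V)$ and can be replaced by the subsegment joining entry and exit, without increasing Euclidean length; for $k\leq -1$ the set $S_k$ is already convex and the issue does not arise. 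Consequently $\rho(z,w)$ is the infimum of $l(\gamma)$ over paths $\gamma\subset S_k$, and for such paths the pointwise bound gives $l(\phi_i\circ\gamma)\leq\lambda\,l(\gamma)$. Taking infimum yields $\rho(\phi_i(z),\phi_i(w))\leq\lambda\,\rho(z,w)$, and the diameter statement follows by supremum over $z,w\in K\subset S_k$.
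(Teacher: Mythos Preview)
Your argument is correct and follows the same underlying idea as the paper --- use the lower bound $|f'|>1$ on $S\setminus\overline V$, uniform on each left half-strip, to contract the path metric under $\phi_i$ --- but you are considerably more careful, and in fact you close two points that the paper's proof leaves implicit. First, you verify that $\phi_i(S\setminus\overline V)\subset S\setminus\overline V$ (needed already in (a) so that $\phi_i\circ\gamma$ is an admissible competitor for $\rho$ and so that $|f'(\phi_i(\zeta))|>1$ along $\gamma$); the paper simply writes ``$\phi_i(\gamma)$ is a curve joining $\phi_i(z)$ and $\phi_i(w)$'' without checking it stays in the domain. Second, for (b) the paper bounds $|f'|$ uniformly from below on $S_k$ and then says ``the first statement follows applying the same reasoning as in (a)'', but the relevant quantity along $\gamma$ is $|f'(\phi_i(\zeta))|$, not $|f'(\zeta)|$; your detour through $\phi_i(S_k)\subset S_{M(k)}$ is exactly what is needed to make this step rigorous. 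Your path-shortening argument showing that $\rho(z,w)$ for $z,w\in S_k$ is realized by paths inside $S_k$ (using that for $k>-1$ the region $\{x>k\}\cap(S\setminus\overline V)$ has two components, each meeting $\{x=k\}$ in a single vertical segment, while for $k\le -1$ the set $S_k$ is convex) is a clean way to finish.

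One small simplification: in bounding $\mathrm{Re}\,\phi_i(z)$ you invoke $\cos(\mathrm{Im}\,\phi_i(z))\le 0$, but the inequality $\mathrm{Re}\,z=a+e^{-a}\cos b\ge a-e^{-a}$ holds for all $b$, and since $a\mapsto a-e^{-a}$ is increasing and unbounded, $\mathrm{Re}\,z\le k$ alone forces $a\le M(k)$. The case distinction on $\mathrm{Re}\,\phi_i(z)\ge -1$ is unnecessary there (though it is of course needed later when bounding $|f'|$ on $S_{M(k)}$).
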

\begin{proof}\begin{enumerate}[label={(\alph*)}]
		\item As observed above, it holds $ \left| f'(z)\right| >1 $ for all $ z\in S\smallsetminus \overline{V} $. Therefore, if $ \gamma \subset S\smallsetminus\overline{V} $ is a geodesic (in $ S\smallsetminus\overline{V}  $) joining $ z $ and $ w $, then $ \phi_i(\gamma) $ is a curve joining $ \phi_i(z) $ and $ \phi_i(w)$, and \[\rho(\phi_i(z), \phi_i(w))\leq \int_{\phi_i(\gamma)} ds=\int_\gamma\left|\phi_i'(s) \right| ds<\int_\gamma ds=\rho(z,w),\] as desired.
		
		\item We start by noticing that $ \left| f'\right|  $ is uniformly bounded in $  S\smallsetminus \overline{V}$. Indeed, on the one hand, for all $ z=x+iy $ with $ x\leq-1 $, it holds \[\left| f'(x+iy)\right| =\sqrt{1+e^{-2x}-2e^{-x}\cos y}\geq \sqrt{1+e^{2}-2e} >1.\]
		On the other hand, assuming $ k>-1 $ and $ -1<x<k $, necessarily $ \frac{\pi}{2}\leq \left| y\right| \leq \pi $, so \[\left| f'(x+iy)\right| =\sqrt{1+e^{-2x}-2e^{-x}\cos y}\geq \sqrt{1+e^{-2x}}\geq \sqrt{1+e^{-2k}}>1.\]Hence,  there exists a constant $ \lambda $, depending only on $ k $, such that $ \left| f'(z)\right| \geq \lambda $, for all $ z\in \left\lbrace z=x+iy\in S\smallsetminus \overline{V}\colon x\leq k\right\rbrace $. 
		Hence, the first statement follows applying the same reasoning as in (a).
		
		Finally, let $ K\subset S_k $ and denote by $ \lambda $ the constant of contraction in $ S_k $. Then, for all $ z,w\in \phi_i(K) $, we have $ f(z), f(w)\in K $, and \[\rho(z,w)\leq \lambda\rho(f(z), f(w))\leq \lambda \textrm{diam}_\rho (K).\]
		Hence, $  \textrm{diam}_\rho (\phi_i(K))\leq \lambda \textrm{diam}_\rho (K)$, as desired.
	\end{enumerate}
\end{proof}

\begin{remark}	{\bf (Expansion and uniform expansion in $ S\smallsetminus \overline{V} $)}\label{remark_expansion}
	We note that, as a direct consequence of Proposition \ref{remark_contract} (a), if $ z,w\in \Omega_i $ and $ f(z), f(w)\in S\smallsetminus\overline{V} $, then \[\rho(z, w)\leq \rho(f(z), f(w)).\]
	
	Likewise, the expansion is uniform in any half-strip $ S_k $. In particular, if $ K $ is a compact set such that $ \textrm{diam}_\rho (K)>0 $ and $ f^n(K)\subset S_k\cap \Omega_{i_n} $, $ i_n\in \left\lbrace 0,1\right\rbrace  $, then $ \textrm{diam}_\rho (f^n(K))\to\infty $, as $ n\to\infty $. 
\end{remark}

Next, we use this subdivision of the strip in $ \Omega_0 $ and $ \Omega_1 $  to define the itinerary for points in $ \widehat{S} $, where
 $ \Sigma_2 $ denotes the space of infinite sequences of two symbols, taken to be 0's and 1's.
\begin{defi}{\bf (Itineraries in $ \widehat{S} $)}\label{def-itinerary}
	Let $ z\in \widehat{S} $ be such that $ f^n(z)\notin \mathbb{R} $, for all $ n\geq 0 $. The sequence $ I(z)={\underline{s}}=\left\lbrace s_n\right\rbrace _n\in \Sigma_2 $ satisfying $ f^n(z)\in \Omega_{s_n} $ is called the \textit{itinerary} of $ z $.
\end{defi}
\begin{remark}
	For points in $ \widehat{S} $ which are eventually mapped to $ \mathbb{R} $, the itinerary is not defined. However, this can be neglected because they are in the Baker domain and their dynamics are already understood.
\end{remark}

We will need a further subdivision of the strip. Let us define the regions\[ \Omega_{ij}\coloneqq \phi_i( \phi_j(S))\smallsetminus \overline{V}.\] For instance, the region $ \Omega_{00} $ has to be seen as the set of points in $ \Omega_0 $ which remain in $ \Omega_0 $ after one iteration of the function, while points in $ \Omega_{01}$ are the points which change to $ \Omega_1 $. Clearly, if $ z\in \widehat{S} $ belongs to $ \Omega_{00} $, its itinerary starts with $ 00 $; while if  $z\in \Omega_{01} $, then $ I(z) $ begins with $ 01 $. The absorbing domain $ V $ is removed from the regions for practical use: this has no effect on the study of $ \partial U $, since its points are never in $ V $, but it allows us to give better estimates on the function. See Figure \ref{fig-regionsSij}.

\begin{figure}[htb!]\centering
	\includegraphics[width=15cm]{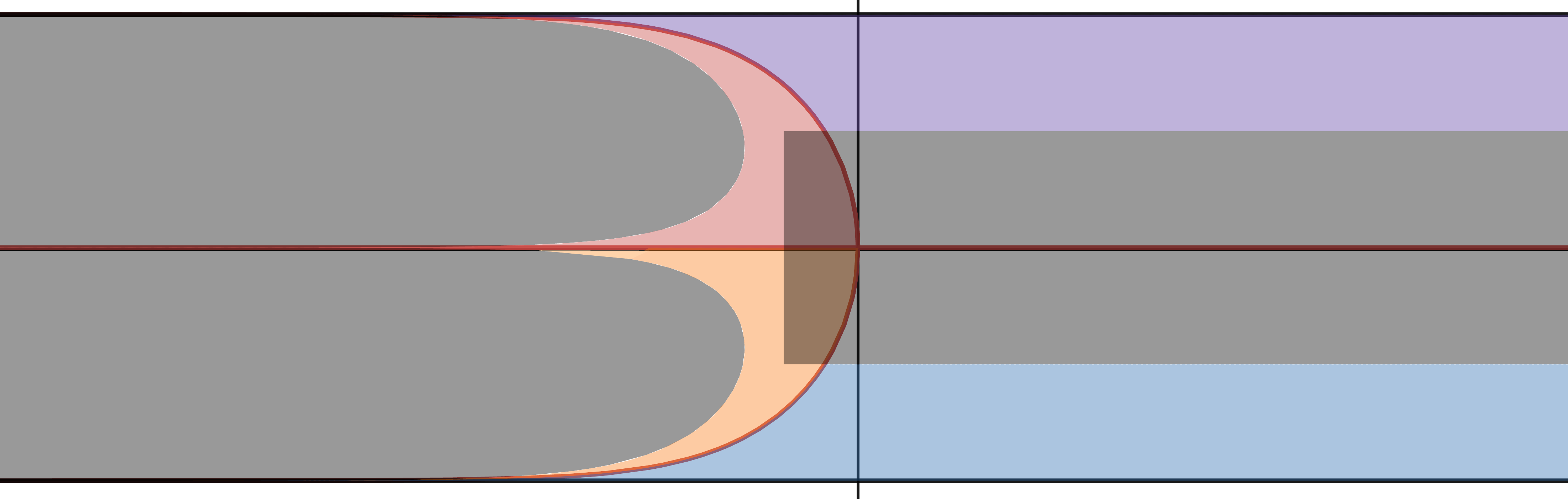}
	\setlength{\unitlength}{15cm}
	\put(-0.2, 0.26){$ \Omega_{00} $}
	\put(-0.2, 0.04){$ \Omega_{11} $}
	\put(-0.2, 0.17){$ V $}
	\put(-0.535, 0.19){$ \Omega_{01}$}
	\put(-0.535, 0.1){$ \Omega_{10}$}
	\caption{\footnotesize Graphic representation of the regions $ \Omega_{ij} $, $ i,j\in \left\lbrace 0,1\right\rbrace  $.}
	\label{fig-regionsSij}
\end{figure}

\begin{lemma}{\bf (Properties of the regions $ \Omega_{ij} $)}\label{lemma-properties-Sij}
	The following properties hold true. \begin{enumerate}[label={\em (\alph*)}]
		\item  $ \Omega_{01}, \Omega_{10}\subset \left\lbrace z\in S\colon \textrm{\em Re z}<0 \right\rbrace $. Therefore, if  $ z\in S\smallsetminus \overline{V} $ with $ \textrm{\em Re }z>0 $, either $ z\in \Omega_{00} $ or $ z\in \Omega_{11} $.
		\item If  $ z\in S\smallsetminus \overline{V} $ with $ -\frac{\pi}{2}<\textrm{\em Im }z<\frac{\pi}{2} $ and $ f(z)\in S $, then either $ z\in \Omega_{01} $ or $ z\in \Omega_{10} $.
		\item For $ z\in \Omega_{ii}$, $ i\in \left\lbrace 0,1\right\rbrace  $, we  have $ \left| \textrm{\em Im }z\right|>\frac{\pi}{2} $.  In particular, $  \textrm{\em Re } f(z)<\textrm{\em Re }z$ and, if $ z\notin L^\pm $, $ \left|  \textrm{\em Im } f(z)\right| <\left| \textrm{\em Im }z\right| $. 
		\item For $ z,w\in \Omega_{ij} $, $ i, j\in \left\lbrace 0,1\right\rbrace  $, it holds $\left| z-w\right| \leq \rho(z,w)\leq \left| z-w\right|+\pi$.
	\end{enumerate}
\end{lemma}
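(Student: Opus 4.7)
The plan is to prove the four parts in sequence, as each uses the formula $\textrm{Im}\, f(x+iy)=y-e^{-x}\sin y$ together with the elementary inequalities $\sin y\le y$ and $e\sin y>y$ on the appropriate ranges, and each part builds on the previous ones.

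For \emph{(a)}, I would fix $z=x+iy\in\Omega_{01}\subset\Omega_0$, so $y\in(0,\pi]$. The condition $f(z)\in\Omega_1$ reads $\textrm{Im}\, f(z)<0$; the case $y=\pi$ is immediately excluded since $\sin\pi=0$ gives $\textrm{Im}\, f(z)=\pi$, and for $y\in(0,\pi)$ the inequality $\sin y\le y$ yields $e^{-x}>y/\sin y\ge 1$, so $\textrm{Re}\, z<0$. The case $\Omega_{10}$ follows by the symmetry $z\mapsto\bar z$. For the second claim, any $z\in S\setminus\overline V$ with $\textrm{Re}\, z>0$ must satisfy $|\textrm{Im}\, z|>\pi/2$ by the definition of $V$, hence $z\in\Omega_0\cup\Omega_1$; a direct computation using $e^{-x}<1$ verifies that $f(z)\in S$, and the first part then rules out $\Omega_{01}\cup\Omega_{10}$, forcing $z\in\Omega_{00}\cup\Omega_{11}$.

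For \emph{(b)}, note that $z\notin\overline V$ together with $|\textrm{Im}\, z|<\pi/2$ forces $\textrm{Re}\, z<-1$ and hence $e^{-x}>e$. The elementary inequality $e\sin y>y$ on $(0,\pi/2]$ (the function $g(y)=e\sin y-y$ satisfies $g(0)=0$, $g(\pi/2)=e-\pi/2>0$, and is unimodal on $[0,\pi/2]$) then gives $\textrm{Im}\, f(z)<y-e\sin y<0$ for $z\in\Omega_0$, placing $z$ in $\Omega_{01}$; the case $z\in\Omega_1$ is symmetric. Part \emph{(c)} follows at once: if $z\in\Omega_{ii}$ had $|\textrm{Im}\, z|\le\pi/2$, the argument of \emph{(b)} (extended to the boundary case $|\textrm{Im}\, z|=\pi/2$ using $e\sin(\pi/2)=e>\pi/2$) would force $z\in\Omega_{i,1-i}$, contradicting disjointness. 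The inequalities on $\textrm{Re}\, f(z)$ and $|\textrm{Im}\, f(z)|$ are then direct consequences of $\cos y<0$ on $(\pi/2,\pi]$ and of the sign of $\sin y$, which vanishes only on $L^\pm$.

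The heart of the argument is \emph{(d)}, and it is where I expect the main difficulty. The lower bound is the general fact recorded just before the lemma. For the upper bound I proceed by cases. If $i=j$, \emph{(c)} places both points in a half-strip $|\textrm{Im}|>\pi/2$ with common sign, so the Euclidean segment $[z,w]$ remains outside $\overline V$ and $\rho(z,w)=|z-w|$. If $i\ne j$, \emph{(a)} gives $\textrm{Re}\, z,\textrm{Re}\, w<0$, and $[z,w]$ still avoids $\overline V$ whenever both points have $|\textrm{Im}|\ge\pi/2$ (segment stays above $\overline V$) or both have $|\textrm{Im}|<\pi/2$ (which forces $\textrm{Re}<-1$ for both, so the segment stays to the left of $\overline V$). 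The only non-trivial sub-case is, up to symmetry, $\textrm{Im}\, z<\pi/2\le\textrm{Im}\, w$ with $\textrm{Re}\, w\ge-1$; here $[z,w]$ enters $\overline V$ through its left side at $p=(-1,y_p)$ and exits through its top at $q=(x_q,\pi/2)$. I replace the portion $[p,q]$ by a path arbitrarily close to $\partial V$ through the corner $c=-1+i\pi/2$; this detour has length $(\pi/2-y_p)+(x_q+1)\le\pi/2+1<\pi$, so the resulting path in $S\setminus\overline V$ has total length at most $|z-w|+\pi$. The delicate point is the bookkeeping: one must check in each sub-case that the chosen detour really lies in $S\setminus\overline V$, and recognize that only the bounded left side of the (otherwise unbounded to the right) rectangle $V$ ever contributes, which is precisely where the constant $\pi$ in the statement of the lemma comes from.
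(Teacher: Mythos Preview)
Your proposal is correct and follows exactly the approach the paper indicates: the paper's own proof consists of the single sentence ``The proof is direct from the definition of the regions. See also Figure~\ref{fig-regionsSij}.'' You have supplied the detailed direct verification that this sentence promises, including the elementary inequalities for (a)--(c) and the geometric case analysis for (d), all of which the paper leaves entirely to the reader.
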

\begin{proof}
	The proof is direct from the definition of the regions. See also Figure \ref{fig-regionsSij}.
\end{proof}

\section{The escaping set in $\partial U $: Proof of \ref{teo:A}}\label{sect-4-escaping}
This section is devoted to the proof of \ref{teo:A}, which asserts that escaping points in $ \partial U $ are organized in curves, and $ \partial U $ is precisely the closure of these curves. To do so, a detailed study of the escaping set is required, which is carried out in a several number of steps. First, it is proven that all escaping points in $ \partial U $ are left-escaping (Lemma \ref{lemma-non-ocsillating-escaping}), and sufficiently to the left, curves of escaping points with the same itinerary are constructed (Prop. \ref{teo-curves-of-escaping-points}). Afterwards, these curves are enlarged by the dynamics to collect all points in $ S $ with the same itinerary (Thm. \ref{teo-dynamic-ray}); and, finally, all this construction is used to prove a characterization of $ \partial U $ (Prop. \ref{prop-caracteritzacio-frontera}), which is of independent interest. As indicated in the end of the section, \ref{teo:A} will follow from Theorem \ref{teo-dynamic-ray} (a) and Proposition \ref{prop-caracteritzacio-frontera} (b).

First, recall that $ \partial U\subset \widehat{S} $, where $ \widehat{S} $ consists of all the points in $ S $ which never leave $ S $ under iteration; and  
 observe that in $ \widehat{S} $ there are three distinguished ways to escape to infinity. Indeed, points can escape to infinity to the left, to the right, or oscillating from left to right. This leads us to define the following sets:
\[\mathcal{I}_S^+\coloneqq\left\lbrace z\in\mathcal{I}(f)\cap \widehat{S}\colon  \textrm{Re } f^{n}(z)\to +\infty \right\rbrace, \]
\[\mathcal{I}_S^-\coloneqq\left\lbrace z\in\mathcal{I}(f)\cap \widehat{S}\colon  \textrm{Re } f^{n}(z)\to -\infty \right\rbrace. \]
By construction, these two sets are disjoint, but they may not  contain all the escaping points: points which escape to $ \infty $ oscillating from left to right  belong  neither to $ \mathcal{I}_S^- $  nor to $ \mathcal{I}_S^- $. However, this possibility is excluded, as it is shown in the following lemma. Intuitively, oscillations are not possible because, on the right, the map is close to the identity.
\begin{lemma}{\bf (No oscillating escaping points)}\label{lemma-non-ocsillating-escaping}
	There are no oscillating escaping points, i.e.\[\mathcal{I}(f)\cap\widehat{S}=\mathcal{I}_S^+ \cup \mathcal{I}_S^- .\]
	Moreover, $ \mathcal{I}_S^+=U $.
\end{lemma}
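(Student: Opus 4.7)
The plan is to handle the two statements separately: first establish $\mathcal{I}_S^+=U$, then rule out oscillating escaping orbits. For the inclusion $U\subseteq \mathcal{I}_S^+$, I would pass through the conjugacy $w=E(z)=e^{-z}$, under which the absorbing domain $V$ maps onto $E(V)=\{|w|<1/e,\;|\arg w|<\pi/2\}$, a parabolic petal for $h(w)=we^{-w}$ at the parabolic fixed point $0$ whose attracting direction is $\mathbb{R}_+$. By the standard local theory of parabolic fixed points, iterates in such a petal converge tangentially to $0$ along the attracting axis, so $h^n(w)\to 0$ with $\arg h^n(w)\to 0$; pulling back through $E$ yields $\textrm{Re }f^n(z)\to+\infty$ for every $z\in V$. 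Since $V$ is absorbing in $U$, the conclusion extends to all of $U$.

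For the converse inclusion $\mathcal{I}_S^+\subseteq U$, take $z\in\widehat S$ with $x_n:=\textrm{Re }f^n(z)\to+\infty$ and set $y_n=\textrm{Im }f^n(z)$. I claim the orbit enters $V$. If not, pick $N$ so that $x_n>0$ for all $n\ge N$; since $x_n>-1$, the assumption $f^n(z)\notin V=\{x>-1,\;|y|<\pi/2\}$ forces $|y_n|\ge\pi/2$, hence $\cos y_n\le 0$, and therefore
\[x_{n+1}=x_n+e^{-x_n}\cos y_n\le x_n.\]
Thus $(x_n)_{n\ge N}$ is non-increasing, contradicting $x_n\to+\infty$. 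So $f^{n_0}(z)\in V\subset U$ for some $n_0$, and since both $f_{|U}$ and $f\colon f^{-1}(S)\cap S\to S$ have degree two (so no preimage of $U$ in $S$ other than $U$ itself), backward induction along the orbit in $\widehat S$ yields $z\in U$.

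For the no-oscillation statement, let $z\in\mathcal{I}(f)\cap\widehat S$. Since $|y_n|\le\pi$ and $|f^n(z)|\to\infty$, we have $|x_n|\to\infty$, so the only possible accumulation values of $(x_n)$ in $[-\infty,+\infty]$ are $\pm\infty$. Suppose toward contradiction that both $+\infty$ and $-\infty$ are accumulation values; then $z\notin U=\mathcal{I}_S^+$ and the orbit avoids $V$ for all time. Pick $n_k$ with $x_{n_k}\to+\infty$; the monotonicity argument from the previous paragraph, applied from step $n_k$, shows that while $x_{n_k+j}>0$ the sequence is non-increasing with $|y_{n_k+j}|\ge\pi/2$, so the orbit must eventually drop below $x=0$. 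To return to arbitrarily large positive real part the orbit needs a ``large jump'', which in turn requires $x\ll 0$ together with $\cos y>0$. The key geometric constraint imposed by $\widehat S$ is that, for $z=x+iy\in\widehat S$ with $x\ll 0$, the bound $|\textrm{Im }f(z)|\le\pi$ forces $|\sin y|\le 2\pi e^{x}$, so $y$ must cluster near one of the lines $\{y=0,\pi,-\pi\}$. If $y$ clusters near $\pm\pi$, the orbit hugs the invariant lines $L^\pm$ and escapes monotonically to $-\infty$, contradicting oscillation; if $y$ clusters near $0$, one uses the quantity $\sin y\cdot e^{x}$ (exactly conserved by the continuous-time flow $\dot z=e^{-z}$ that approximates the dynamics on the right side) to trace the image of the jump and check that it falls inside $V$, again a contradiction.

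I expect this third part to be the main obstacle: the argument must couple the slow ``right-phase'' drift, where the map is close to the identity and monotonicity is easy to exploit, with the rapid ``left-phase'' jumps, while simultaneously tracking that the orbit remains in $\widehat S$ and avoids $V$. Parts~1 and~2 are essentially bookkeeping with the sign of $\cos y_n$ and with the absorbing domain; the quantitative estimate showing that a hypothetical ``bounce back'' jump either exits $\widehat S$ or enters $V$ is the delicate piece that closes the argument.
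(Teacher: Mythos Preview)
Your treatment of $\mathcal{I}_S^+=U$ is correct and matches the paper's argument (the paper is terser on $U\subseteq\mathcal{I}_S^+$, but your use of tangential convergence in the parabolic petal is the right justification). The gap is in the no-oscillation part, and it is a gap of \emph{direction}: you analyse the wrong jump.

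You correctly observe that for $n\ge n_0$ one has $|x_n|>R$, and that once $x_n>R$ the sequence is non-increasing (since the orbit avoids $V$, so $|y_n|\ge\pi/2$ and $\cos y_n\le 0$). You then conclude the orbit must drop to $x<-R$ and spend the rest of the argument trying to control the subsequent \emph{left-to-right} jump. That analysis is genuinely hard---your case split on $y$ near $0$ versus $\pm\pi$ is not complete as stated (for instance, $y_n$ close to $\pi$ at one step does not force the orbit to ``hug $L^+$'' thereafter; the very next $y_{n+1}$ can lie anywhere in $[-\pi,\pi]$), and the claim that the image of a jump from $\{y\approx 0\}$ lands in $V$ fails in general, since $|y_{n+1}|$ is only constrained to be $\le\pi$.

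What you are missing is that the drop itself is already impossible. Independently of $y_n$ one has the trivial lower bound
\[
x_{n+1}=x_n+e^{-x_n}\cos y_n\ \ge\ x_n-e^{-x_n},
\]
so if $x_n>R>1$ then $x_{n+1}>R-e^{-R}>0$, and combined with $|x_{n+1}|>R$ this forces $x_{n+1}>R$. Hence once the orbit is on the right of $\{x=R\}$ it stays there, and $x_n\to+\infty$: no oscillation. This is the paper's entire argument for the first statement---a single inequality using only $\cos y\ge -1$. Your upper bound $x_{n+1}\le x_n$ (from $\cos y_n\le 0$) is the wrong side; the lower bound is what closes the proof in one line and makes all the left-side analysis unnecessary.
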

\begin{proof}
	Assume $ z\in \mathcal{I}(f)\cap \widehat{S} $. For any $ r>0 $, there exists $ n_0 $ such that, for all $ n\geq n_0 $, $ f^n(z)\in S $ and $ \left| f^n(z)\right|>r $. In particular, taking $ r>\sqrt{\pi^2+1} $, there exists $ R>1 $ such that $\textrm{Re } f^n(z)>R $ or $\textrm{Re } f^n(z)<-R $, for all $ n\geq n_0 $. Assuming that $ \textrm{Re }z>R $, we are going to see that it is not possible to have $ \textrm{Re }f(z)<-R $, so oscillating escaping orbits are not possible. Indeed, \[\textrm{Re } f(x+iy)=x+e^{-x}\cos y\geq x-e^{-x}\geq R-e^{-R}.\]Since $ R>1 $, the right-hand side of the inequality is greater than 0, so it does not hold $ \textrm{Re }f(z)<-R $, proving the first statement.

To prove the second statement, first observe that $ U\subset \mathcal{I}^+_S $. 
	It is left to show that, for $ z \in \widehat{S}\smallsetminus U $, it cannot hold $ \textrm{Re } f^n(z)\to +\infty $. Indeed, such a point never enters the absorbing domain, so, when $ \textrm{Re } f^n(z)>0 $, either $ \textrm{Im } f^n(z)>\frac{\pi}{2} $ or $ \textrm{Im } f^n(z)<-\frac{\pi}{2} $. In both cases, $ \textrm{Re } f^{n+1}(z)<\textrm{Re } f^n(z) $, so it is impossible for a point which is not in $ U $ to belong to $ \mathcal{I}_S^+ $.
\end{proof}

 Next we show that these left-escaping points are organized in curves, which eventually contain all left-escaping points with the same itinerary. To do so, we adapt the proof of \cite[Prop. 3.2]{goldberg-devaney} for the exponential maps $ \lambda e^{z} $, $ 0<\lambda<\frac{1}{e} $, to our setting. Moreover, the construction is made in such a way that a parametrization of the curves appears implicitly, as the one introduced in \cite{bodelon} for the exponential family (see also \cite{schleicher-zimmer, tesi-lasse-rempe,rempe2007}). The main attribute of this parametrization is to conjugate the dynamics on the curve with the model of growth given by $ F(t)=t-e^{-t} $, $ t\in\mathbb{R} $. Observe that $ F\colon\mathbb{R}\to\mathbb{R} $ is an increasing homeomorphism of $ \mathbb{R} $ without fixed points, where all iterates converge to $ -\infty $ under iteration. 

\begin{prop}{\bf (Escaping tails)} \label{teo-curves-of-escaping-points}
	For every sequence $ {\underline{s}}=\left\lbrace s_n\right\rbrace _n \in\Sigma_2$ there exists a curve of left-escaping points $ \gamma_{\underline{s}}\colon \left( -\infty, -2\right] \to \mathcal{I}^-_S $, whose points have itinerary $ {\underline{s}} $ and $ \gamma_{\underline{s}}\subset\partial U $. Such curve is called {\em escaping tail}.
	The following properties hold.
		\begin{enumerate}[label={\em (\alph*)}]
		\item {\em (Asymptotics and dynamics)} It holds that  $ \textrm{Re }\gamma_{{\underline{s}}}(t)\to -\infty $ , as $ t\to-\infty $,  and $\textrm{Re } f^n(\gamma_{{\underline{s}}}(t)) \to -\infty$, as $ n\to \infty $. Moreover, $ \textrm{Re }f^n(\gamma_{{\underline{s}}}(t))\leq -2 $ for all $ n\geq 0 $.
		\item {\em (Uniqueness)} Escaping tails are unique, in the sense that if $ z\in \mathcal{I}^-_S $, with $ I(z)=\underline{s} $, and $ \textrm{Re }f^n(z)\leq -2 -\pi$ for all $ n\geq 0 $, then $ z\in\gamma_{\underline{s}} $.
		\item {\em (Internal dynamics)} For $ t\leq -2 $, it is satisfied \[ f(\gamma_{\underline{s}}(t))=\gamma_{\sigma({\underline{s}})}(F(t)),\] where $ \sigma  $ denotes the shift map and $ F(t)=t-e^{-t} $.
	\end{enumerate}
\end{prop}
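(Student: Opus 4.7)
The plan is to construct $\gamma_{\underline{s}}$ as the pointwise limit of iterated inverse branches with the parameter advanced by the real model $F$, so that the conjugacy in (c) drops out of the construction. For $t\leq -2$ and $\underline{s}\in\Sigma_2$, I set
\[
z_n(t,\underline{s}) \;:=\; \phi_{s_0}\circ\phi_{s_1}\circ\cdots\circ\phi_{s_{n-1}}\bigl(F^n(t)\bigr),
\]
which is well defined because $F^n(t)\in\mathbb{R}$ with $F^n(t)\leq t\leq -2<1$, so $F^n(t)\in S\smallsetminus[1,+\infty)$ and every partial composite lands in some $\Omega_{s_k}\subset S\smallsetminus[1,+\infty)$. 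The candidate curve is $\gamma_{\underline{s}}(t):=\lim_{n\to\infty}z_n(t,\underline{s})$, and the telescoping identity $f(z_n(t,\underline{s}))=z_{n-1}(F(t),\sigma\underline{s})$ is immediate, so (c) will follow upon passing to the limit.

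The central analytic step is to show that $\{z_n(t,\underline{s})\}_n$ is Cauchy in the $\rho$-metric, uniformly in $t$ on compact subsets of $(-\infty,-2]$. A direct analysis of the equation $f(x+iy)=u$ for $\textrm{Re }u$ very negative shows that the preimage in $\Omega_{s_i}$ has $x\approx F^{-1}(\textrm{Re }u)$ and $y$ close to $\pm\pi$. Inductively, every partial composite $\phi_{s_k}\circ\cdots\circ\phi_{s_{n-1}}(F^n(t))$ has real part comparable to $F^k(t)\leq t\leq -2$, so the whole construction stays in the half-strip $S_{-2}$, where Proposition \ref{remark_contract}(b) gives uniform $\rho$-contraction with factor $\lambda=\lambda(-2)<1$. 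The same analysis yields the base estimate $\rho(F^n(t),\phi_{s_n}(F^{n+1}(t)))\leq \pi+o(1)$, because the two points differ essentially by a vertical translate of height $\pi$, and the connecting segment lies at real part $\leq -2<-1$, hence outside $\overline{V}$. Applying the $n$-fold contraction yields $\rho(z_n,z_{n+1})\leq C\lambda^n$, summable and uniform in $t$ on compacta, producing both convergence and continuity of $\gamma_{\underline{s}}$.

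With the curve in hand, property (c) passes to the limit and property (a) follows from the tracking $\textrm{Re }\gamma_{\underline{s}}(t)\approx t$ together with the conjugacy $f^n(\gamma_{\underline{s}}(t))=\gamma_{\sigma^n\underline{s}}(F^n(t))$; in particular $\textrm{Re }f^n(\gamma_{\underline{s}}(t))\to -\infty$ as $n\to\infty$, with the bound $\textrm{Re }f^n(\gamma_{\underline{s}}(t))\leq -2$ coming from the same real-part tracking. For the inclusion $\gamma_{\underline{s}}\subset\partial U$, note that $F^n(t)\in\mathbb{R}\subset U$ and, since both $f_{|U}$ and $f\colon f^{-1}(S)\cap S\to S$ have degree two, the two preimages in $S$ of any point of $U$ coincide with its two preimages in $U$; hence each $z_n(t,\underline{s})\in U$ and $\gamma_{\underline{s}}(t)\in\overline{U}$. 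Since orbits in $U$ escape to the right while orbits along $\gamma_{\underline{s}}$ escape to the left, $\gamma_{\underline{s}}(t)\notin U$, so $\gamma_{\underline{s}}(t)\in\partial U$.

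For uniqueness (b), given $z\in\mathcal{I}^-_S$ with $I(z)=\underline{s}$ and $\textrm{Re }f^n(z)\leq -2-\pi$, the strategy is to reverse the parametrization. The slack in the hypothesis puts $f^n(z)$ in the regime where $\textrm{Re }f^{n+1}(z)=F(\textrm{Re }f^n(z))+o(1)$, so I would define $t$ as the (Cauchy) limit of $F^{-n}(\textrm{Re }f^n(z))$ and then verify $\gamma_{\underline{s}}(t)=z$ by applying $n$ inverse branches to the identity $f^n(z)=f^n(\gamma_{\underline{s}}(t))$ and invoking the uniform contraction in $S_{-2}$. The step I expect to be most delicate is this uniqueness argument, since unlike the forward construction it requires quantitative control of the deviation between the true dynamics of $f$ on the tail and the real model $F$ in order to extract the correct parameter $t$ from a given left-escaping orbit.
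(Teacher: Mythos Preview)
Your existence argument takes a genuinely different route from the paper's. The paper builds, for each $t\leq -2$, a sequence of squares $D_n^{t,\underline{s}}$ of side $\pi$ in $\Omega_{s_n}$ with right edge at $F^n(t)$, proves the covering relation $D_n^{t,\underline{s}}\subset f(D_{n-1}^{t,\underline{s}})$, and obtains $\gamma_{\underline{s}}(t)$ as the nested intersection of the pullbacks $Q_n^{t,\underline{s}}=\phi_{s_0}\circ\cdots\circ\phi_{s_n}(\overline{D_{n+1}^{t,\underline{s}}})$. Your scheme instead pulls back single real points $F^n(t)\in\mathbb{R}\subset U$ and takes a Cauchy limit. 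Both ultimately rest on the uniform contraction of Proposition~\ref{remark_contract}(b), but the square approach buys something you only assert: from the covering relation one gets $\phi_{s_k}\circ\cdots\circ\phi_{s_n}(\overline{D_{n+1}})\subset\overline{D_k}\subset\{\textrm{Re }z\leq F^k(t)\leq -2\}$ for free, which is precisely what is needed to iterate the contraction with a fixed $\lambda$. In your version this (``real part comparable to $F^k(t)$'') is stated but not established, and Proposition~\ref{remark_contract}(b) says nothing about where $\phi_i$ sends points, so you really do owe an inductive estimate here. The same missing estimate is what you invoke to get the sharp bound $\textrm{Re }f^n(\gamma_{\underline{s}}(t))\leq -2$ in (a). On the plus side, your argument that $z_n\in U$ by the degree-two count, hence $\gamma_{\underline{s}}(t)\in\overline{U}\smallsetminus U=\partial U$, is cleaner than the paper's (which shows each $Q_n$ meets $U$).

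The substantive gap is (b). The paper does \emph{not} extract a parameter from the orbit; it invokes the general criniferous result of \cite{r3s} to produce a second curve $\widetilde{\gamma}_{\underline{s}}$ of left-escaping points through $z_0$ with the same itinerary, and then argues topologically: the open region bounded by $\gamma_{\underline{s}}$, $\widetilde{\gamma}_{\underline{s}}$ and a vertical segment stays in $S\cap\{\textrm{Re }z<-2\}$ forever (since $|\textrm{Im}|>\pi/2$ there forces $\textrm{Re }f(z)<\textrm{Re }z$), so it never meets $U$, contradicting that $\mathcal{J}(f)$ has empty interior. Your route via $t=\lim_n F^{-n}(\textrm{Re }f^n(z))$ runs into a real obstacle: with $a_n=\textrm{Re }f^n(z)$ one has $a_{n+1}-F(a_n)=e^{-a_n}(1+\cos\textrm{Im }f^n(z))\geq 0$, so $F^{-n}(a_n)$ is monotone increasing, but the hypothesis $a_n\leq -2-\pi$ gives no control on $\textrm{Im }f^n(z)$ being near $\pm\pi$, and the only obvious upper bound $F^{-n}(-2-\pi)\to+\infty$. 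Showing this limit exists (and lands in $(-\infty,-2]$) is not a detail you can wave away; it is essentially a shadowing lemma for a non-uniformly expanding map, and the paper sidesteps it entirely.
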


It is worth mentioning that the existence of such curves of escaping points can be deduced directly from \cite[Thm. 1.2]{r3s} for functions in class $ \mathcal{B} $ of finite order, applied to $ h(w)=we^{-w} $. Indeed, both functions $ f $ and $ h $ are semiconjugate by $ w=e^{-z} $, so left-escaping points for $ f $ correspond to the escaping set of $ h $.  Then, if $ z\in \mathcal{I}^-_S $, then $ w=e^{-z}\in\mathcal{I}(h) $ and, by \cite[Thm. 1.2]{r3s}, it is connected to $ \infty $ by a curve $ \Gamma $ of escaping points. An appropriate lift $ \gamma $ of $ \Gamma $ is a curve of left-escaping points connecting $ z $ to infinity.
It is easy to see that points in $ \gamma $ must have the same itinerary. Indeed, $ \gamma $ must be contained in either $ \Omega_0 $ or in $ \Omega_1 $, since it cannot intersect $ \mathbb{R} $ (because it is in the Fatou set) nor $ L^\pm $ (since $ L^\pm $ separate distinct preimages of the $ w $-plane under $ w=e^{-z} $). Moreover, this is also true for any iterated image of $ \gamma $, implying that all points in $ \gamma $ must have the same itinerary.

However, from this general result, it cannot be deduced which of these curves are in $ \partial U $ and it does not give a parametrization for the curves, which will be important in the following sections. This is why we choose an alternative proof for Proposition \ref{teo-curves-of-escaping-points}, based on the more constructive approach of \cite{goldberg-devaney}. On the other hand, we do apply \cite[Thm. 1.2]{r3s} to deduce the uniqueness of the escaping tails.

\begin{proof} [Proof of Proposition \ref{teo-curves-of-escaping-points}]
	First, let us show that, to every $  t\leq- 2  $ and $ \underline{s}\in\Sigma_2 $, we can find a left-escaping point $ z^{t, \underline{s}} $, with itinerary $ \underline{s} $, associated to $ t $. To do so, fix 
 $  t\leq- 2  $ and $ \underline{s}\in\Sigma_2 $, and let $ D^{t,\underline{s}}_0 $ be the square of side length $ \pi $ located in $ \Omega_{s_0} $ and right-hand side at $t_0\coloneqq t $. We construct  a sequence of squares $ \left\lbrace D^{t,\underline{s}}_n\right\rbrace _n $, where $ D^{t,\underline{s}}_n $ is a square of side length $ \pi $, located in $ \Omega_{s_n} $ and right-hand side  $t_n\coloneqq F^n(t)  $, where $ F(t)=t-e^{-t} $. Observe that $t_n\to-\infty $, as $ n\to\infty $. Compare with Figure \ref{fig-quadrats}.
 		\begin{figure}[htb!]\centering
 	\includegraphics[width=15cm]{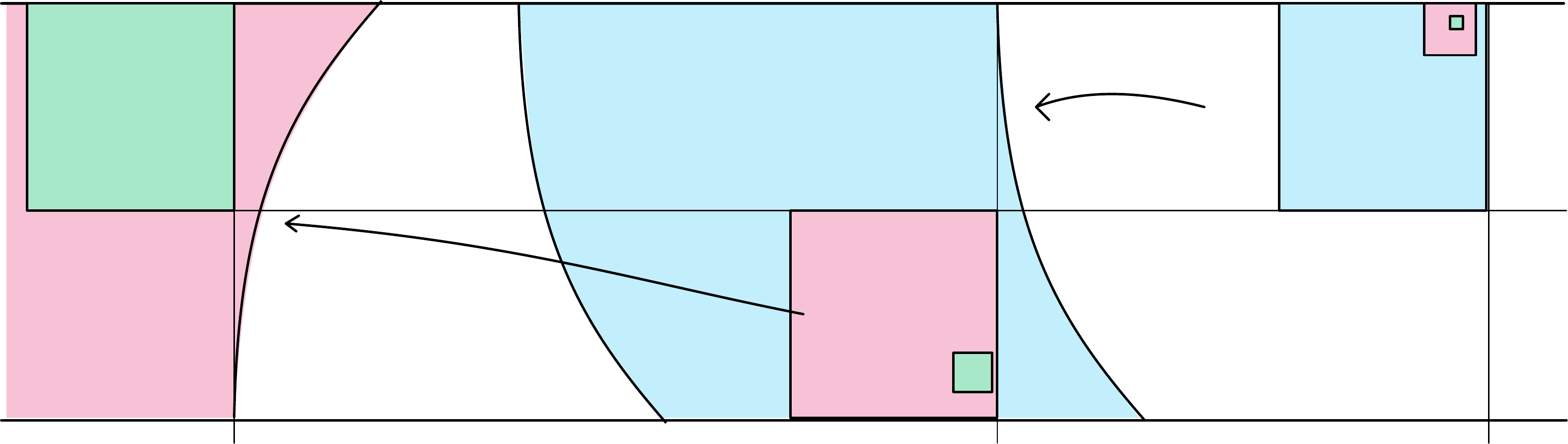}
 	\setlength{\unitlength}{15cm}
 	\put(-0.7, 0.105){\footnotesize $f$}
 	\put(-0.29, 0.23){\footnotesize $f$}
 	\put(-0.13, 0.22){\footnotesize $D_0^{t, \underline{s}}$}
 	\put(-0.45, 0.08){\footnotesize $D_1^{t, \underline{s}}$}
 	\put(-0.94, 0.22){\footnotesize $D_2^{t, \underline{s}}$}
 		\put(-0.11, -0.02){\footnotesize $\left\lbrace \textrm{Re }z=t\right\rbrace $}
 			\put(-0.425, -0.02){\footnotesize $\left\lbrace \textrm{Re }z=F(t)\right\rbrace $}
 				\put(-0.91, -0.02){\footnotesize $\left\lbrace \textrm{Re }z=F^2(t)\right\rbrace $}
 	\caption{\footnotesize Schematic representation of the first three squares $ \left\lbrace D^{t,\underline{s}}_n\right\rbrace _n $, for a given $ t\leq -2 $, showing how they satisfy $ D^{t,\underline{s}}_n\subset f(D^{t,\underline{s}}_{n-1}) $.}\label{fig-quadrats}
 \end{figure}
	
	\begin{claim*}
		The squares $ \left\lbrace D^{t,\underline{s}}_n\right\rbrace _n $ satisfy $ D^{t,\underline{s}}_n\subset f(D^{t,\underline{s}}_{n-1}) $, for all $ n\geq 1 $.
	\end{claim*}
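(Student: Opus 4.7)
The plan is to reduce the claim to showing that the appropriate inverse branch of $f$ sends $D_n$ into $D_{n-1}$. Without loss of generality I take $s_{n-1}=0$, so $D_{n-1}\subset\Omega_0$ is a closed square of side $\pi$ whose right edge lies on $\{\textrm{Re}\,z=t_{n-1}\}$; the case $s_{n-1}=1$ is symmetric. Since $t_n=F(t_{n-1})=t_{n-1}-e^{-t_{n-1}}\leq -2$, one has $D_n\subset S\setminus[1,+\infty)$, so the inverse branch $\phi_0\colon S\setminus[1,+\infty)\to\Omega_0$ is defined on $D_n$. Because $f|_{\Omega_0}$ is a conformal bijection onto $S\setminus[1,+\infty)$ (the only critical point $0$ lies on $\partial\Omega_0$), the desired inclusion $D_n\subset f(D_{n-1})$ is equivalent to $\phi_0(D_n)\subset D_{n-1}$.

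I would then use the formula $f(x+iy)=(x+e^{-x}\cos y)+i(y-e^{-x}\sin y)$ to pin down $\phi_0(w)$ for $w\in D_n$. The key calculation is that on the right edge of $D_{n-1}$, $\textrm{Re}\,f(t_{n-1}+iy)=t_{n-1}+e^{-t_{n-1}}\cos y$ attains its minimum $t_n$ precisely at $y=\pi$, so $f$ sends the vertex $t_{n-1}+i\pi$ to the vertex $t_n+i\pi$ of the next square, matching the model $F$ exactly. Writing $z+e^{-z}=w$ as $z=w-e^{-z}$ and using $|z-w|=e^{-\textrm{Re}\,z}$, a straightforward implicit-function expansion then yields, for $w=u+iv\in D_n$, that $\textrm{Re}\,z=t_{n-1}-(u-t_n)e^{t_{n-1}}+O(e^{2t_{n-1}})$ and $\textrm{Im}\,z=\pi-(\pi-v)e^{t_{n-1}}+O(e^{2t_{n-1}})$. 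Since $t_{n-1}\leq-2$, these corrections are much smaller than $\pi$, so $(\textrm{Re}\,z,\textrm{Im}\,z)$ lies comfortably inside $[t_{n-1}-\pi,t_{n-1}]\times[0,\pi]=D_{n-1}$. In fact $\phi_0(D_n)$ is a tiny neighbourhood of the vertex $t_{n-1}+i\pi$, consistent with the strong contraction of $\phi_0$ far to the left established in Proposition \ref{remark_contract}.

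The main technical obstacle is verifying uniformity of the implicit-function expansion over the whole square $D_n$, and in particular handling the case $s_n=1$, where $D_n$ lies in the lower half-strip and so $v\in[-\pi,0]$; the expansion above still applies because $(\pi-v)e^{t_{n-1}}\leq 2\pi e^{-2}$ is still far smaller than $\pi$. An alternative, more topological argument is also available: one computes $f(\partial D_{n-1})$ edge by edge and shows, via the univalence of $f|_{\Omega_0}$ and the argument principle, that the resulting Jordan curve encircles $D_n$. This avoids the implicit-function estimates but replaces them by geometric bookkeeping for the left edge, whose image sweeps a range comparable to $e^{\pi-t_{n-1}}$ and must be checked not to wind around $D_n$ more than once. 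I would choose the inverse-branch route because the exponential smallness of $e^{t_{n-1}}$ makes every estimate very comfortable.
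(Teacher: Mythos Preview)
Your inverse-branch strategy is genuinely different from the paper's. The paper takes precisely the route you sketch and then set aside in your final paragraph: it works forward, estimating $f$ on the two vertical edges of $D_0$ (using $\textrm{Re }f(z)\geq t-e^{-t}$ on the right edge and the crude bound $|f(z)|>\tfrac12 e^{-\textrm{Re }z}$ on the left edge) to show that the part of $f(D_0)$ lying in $S\cap\{\textrm{Re }z<0\}$ contains $D_1$. The ``geometric bookkeeping'' you were worried about occupies only a few lines.

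Your route can be made to work, but the write-up has real errors. First, $f|_{\Omega_0}$ is \emph{not} a bijection onto $S\setminus[1,+\infty)$: there is a whole region of $\Omega_0$, bounded by the preimage of $L^-$ drawn in Figure~\ref{fig-absorbing}, that $f$ sends outside $S$. This is harmless for your purposes, since you only use the implication $\phi_0(D_n)\subset D_{n-1}\Rightarrow D_n\subset f(D_{n-1})$, and that follows just from $f\circ\phi_0=\mathrm{id}$. Second, the sign in your real-part expansion is wrong: linearising $w-w_0=f'(z_0)(z-z_0)$ with $f'(z_0)=1+e^{-t_{n-1}}>0$ gives $\textrm{Re}\,z\approx t_{n-1}+(u-t_n)e^{t_{n-1}}$, not $t_{n-1}-(u-t_n)e^{t_{n-1}}$. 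With your sign, since $u\leq t_n$, the preimage would lie to the \emph{right} of $D_{n-1}$, not inside it. Third, and more structurally, the Taylor expansion around $z_0=t_{n-1}+i\pi$ is only valid once you already know $\phi_0(w)$ is close to $z_0$; as written the argument is circular. A fix is to integrate $|\phi_0'|$ along a path from $w_0=t_n+i\pi$ into $D_n$, but when $s_n=1$ that anchor sits at distance $\geq\pi$ from $D_n$, and at the borderline value $t_{n-1}=-2$ the first-order displacement you quote is already $2\pi e^{-2}\approx 0.85$ with a second-order correction of comparable size, so the ``comfortably inside'' claim needs an honest estimate rather than an asymptotic one.
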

\begin{proof}[Proof of the claim]
It is enough to show that $ D^{t,\underline{s}}_1\subset f(D^{t,\underline{s}}_{0}) $. Let us denote by $ \partial^- D$ and $ \partial^+ D$, the left and the right-hand sides of a square $ D $, respectively. 

First let us observe that, on the left, the map $ f $ acts on a similar way than the exponential, sending vertical segments to circular curves, which start at $ L^+ $, ends at $ L^- $ and have an auto-intersection in the negative real line. Compare with Figure \ref{fig-din-S}.

\begin{figure}[htb!]\centering
	\includegraphics[width=15cm]{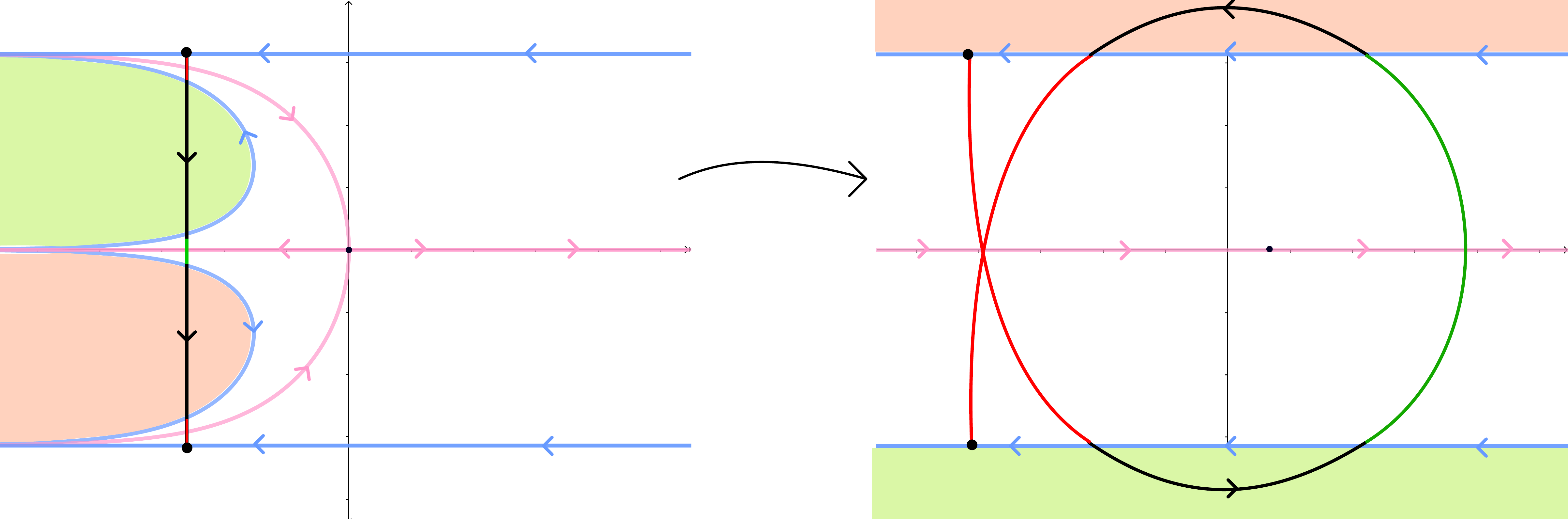}
	\setlength{\unitlength}{15cm}
	\put(-0.52,0.23){$ f $}
	\put(-0.9,0.305){\footnotesize $ t+\pi i $}
	\put(-0.9,0.025){\footnotesize $ t-\pi i $}
	\put(-0.42,0.305){\footnotesize $ F(t)+\pi i $}
	\put(-0.42,0.025){\footnotesize $ F(t)-\pi i $}
	\caption{\footnotesize Schematic representation of how $ f $ acts on the left-side on the strip.}
	\label{fig-din-S}
\end{figure}

 Moreover, if $ \textrm{Re }z=t\leq -2 $, we have the following inequality controlling the modulus of the image: \[ \left| f(z)\right| =\left| z+e^{-z}\right| \geq \left| e^{-z}\right| -\left| z\right| = e^{-\textrm{Re }z} -\left| z\right|>\frac{1}{2}e^{-\textrm{Re } z}=\frac{1}{2}e^{-t}>-t.\]

To prove that $ D^{t,\underline{s}}_1\subset f(D^{t,\underline{s}}_{0}) $, it is enough to show that $ \partial^- D^{t,\underline{s}}_1$ and  $ \partial^+ D^{t,\underline{s}}_1$ are contained in $ f(D^{t,\underline{s}}_{0}) $. In fact, we shall see that $ \partial^- D^{t,\underline{s}}_1$ and  $ \partial^+ D^{t,\underline{s}}_1$ are contained in $ f(D^{t,\underline{s}}_{0}) \cap S\cap \left\lbrace \textrm{Re }z<0\right\rbrace $. Compare with Figure \ref{fig-din-S}.

First we see that $ \partial^+ D^{t,\underline{s}}_1$ is located more to the left than $ f(\partial ^+D^{t,\underline{s}}_{0}) $. Indeed, points in $ \partial^+ D^{t,\underline{s}}_1$ have real part $ t-e^{-t} $, while for $z\in \partial^+ D^{t,\underline{s}}_{0} $ it is satisfied that $\textrm{Re }f(z)\geq t-e^{-t}$.

Finally, to see that $ \partial^- D^{t,\underline{s}}_1$ is contained in $ f(D^{t,\underline{s}}_{0}) \cap S\cap \left\lbrace \textrm{Re }z<0\right\rbrace $,  we shall see that $ \partial^- D^{t,\underline{s}}_1$ is located more to the right than $ f(\partial ^-D^{t,\underline{s}}_{0})\cap S\cap  \left\lbrace \textrm{Re }z<0\right\rbrace $. For 
 $ z \in \partial ^-D^{t,\underline{s}}_{0} $ and such that $ f(z)\in S\cap \left\lbrace \textrm{Re }z<0\right\rbrace  $, we have:
\[ \textrm{Re }f(z)\leq- \left| f(z)\right| +\pi <-\frac{1}{2}e^{-\textrm{Re } z}+\pi=-\frac{1}{2}e^{-(t-\pi)}+\pi.\]

A point $ z\in \partial^- D^{t,\underline{s}}_1 $ has real part $ t-e^{-t}-\pi $, which is easy to see that it is bigger than our previous bound. Indeed,  the real function $ h(x)= x-e^{-x} +\frac{1}{2}e^{-(x-\pi)}-2\pi$ is positive, when $ x<0 $.  Therefore, the claim is proved. 
\end{proof}

	Now, let us define\[ Q^{t, \underline{s}}_n\coloneqq\phi_{s_0}\circ\dots\circ\phi_{s_n}(\overline{D^{t, \underline{s}}_{n+1}}),\]\[z^{t, \underline{s}}\coloneqq\bigcap\limits_{n\geq0} Q^{t, \underline{s}}_n.\]
	Notice that $ z^{t, \underline{s}} $ is a unique point. Indeed, $ \left\lbrace Q^{t, \underline{s}}_n\right\rbrace _n $ is a sequence of nested compact sets contained in $ D^{t, \underline{s}}_0 $. Its intersection is a connected compact set, and to prove that it consists precisely of a unique point, we shall see that the diameter of $ Q^{t, \underline{s}}_n $ tends to 0, as $ n\to\infty $. Indeed, since $  \phi_{s_k}\circ\dots\circ\phi_{s_n}(\overline{D^{t, \underline{s}}_{n+1}}) \subset \left\lbrace \textrm{Re }z<0\right\rbrace  $ for all $ n\geq 0 $ and $ k\leq n $, each time we apply either $ \phi_0 $ or $ \phi_1 $ we are applying a contraction of constant $ \frac{1}{\lambda} <1$ with respect to the $ \rho $-distance (see Prop. \ref{remark_contract}).  Recall that, in the half-plane $ \left\lbrace \textrm{Re }z<-2\right\rbrace $, the $ \rho $-distance and the Euclidean distance coincide. Hence,
	\[\textrm{diam }Q^{t, \underline{s}}_n= \textrm{diam}_\rho Q^{t, \underline{s}}_n\leq\frac{1}{\lambda^{n+1}}\sqrt{2}\pi\to 0,\textrm{ as } n\to \infty.\]
	
	The point $ z^{t, \underline{s}} $ satisfies the required conditions. Indeed, $ z^{t, \underline{s}} $
	 follows the itinerary prescribed by $ {\underline{s}} $ and converges to $ -\infty  $ under iteration. Moreover,  we claim that $ z^{t, \underline{s}} \in\partial U$. Indeed, since $ f( \overline{D^{t, \underline{s}}_{n+1}} ) $ intersects $ U $, then $ \overline{D^{t, \underline{s}}_{n+1}} $ contains points of $ U $, and so does $ Q^{t, \underline{s}}_n $.  Since the sets $ \left\lbrace Q^{t, \underline{s}}_n\right\rbrace_n $ shrink to $ z^{t, \underline{s}}  $, this gives a sequence of points in $ U $ approximating $ z^{t, \underline{s}} $. 
	 
	Therefore, we associate to any $ t\leq -2$ and $ \underline{s} \in \Sigma_2$ the point $ z^{t, \underline{s}} $. 
	 Observe that the resulting point $ z^{t, \underline{s}} $ depends continuously on $ t $, since the entire construction depends continuously on $ t $. Hence, letting $ t\to-\infty $, the points $ z_{{\underline{s}}, t} $ describing the required curve $ \gamma_{\underline{s}} $ of left-escaping points with itinerary $ {\underline{s}} $. This induces naturally a parametrization on $ \gamma_{\underline{s}} $: we define 
	$\gamma_{\underline{s}} \colon \left( -\infty, -2\right]  \to \mathbb{C}$ such that
	$ \gamma_{\underline{s}}(t)\coloneqq z^{t,\underline{s}}$.
	
	Finally, let us prove that, with this parametrization, the announced properties actually hold.
		\begin{enumerate}[label={ (\alph*)}]
			\item { (Asymptotics and dynamics)}
			
			It is clear by the construction of the squares that  $ \gamma_{{\underline{s}}}(t)\to -\infty $ , as $ t\to-\infty $,  and, for every $ t\leq -2 $, $ f^n(\gamma_{{\underline{s}}}(t)) \to -\infty$, as $ n\to \infty $. Moreover, since the orbit of a point is contained in the corresponding squares, we have  $ \textrm{Re }f^n(\gamma_{{\underline{s}}}(t))\leq -2 $ for all $ n\geq 0 $.
			 
		\item { (Uniqueness)} 
		
		Uniqueness follows from the results in \cite{r3s}, which imply that every point in $ \mathcal{I}_S^- $ can be connected to infinity by a curve of left-escaping points with the same itinerary. 
		
		Assume, on the contrary, that there exists $ z_0\in \mathcal{I}^-_S $, with $ I(z_0)=\underline{s} $, and $ \textrm{Re }f^n(z_0)\leq -2 -\pi$ for all $ n\geq 0 $, but $ z_0\notin\gamma_{\underline{s}} $. Then, there would exist another curve $ \widetilde{\gamma}_{\underline{s}} $ of left-escaping points with itinerary $ \underline{s} $ connecting $ z_0 $ to $ \infty $. Consider an open set $ W $ placed in the left-unbounded region delimited by $ \gamma_{\underline{s}} $, $ \widetilde{\gamma}_{\underline{s}} $ and $ \left\lbrace z\in S\colon \textrm{Re }z= \textrm{Re }z_0\right\rbrace  $. 
		
		We claim that $ f^n (W)\subset S\cap \left\lbrace \textrm{Re }z<-2\right\rbrace  $, for all $ n\geq 0 $.
		Indeed, note that   $\gamma_{\underline{s}} , \widetilde{\gamma}_{\underline{s}}\subset  \left\lbrace \left| \textrm{Im }z\right| >\frac{\pi}{2}\right\rbrace$.
		Then, $ W\subset S\cap \left\lbrace\left| \textrm{Im }z\right| >\frac{\pi}{2}\right\rbrace   $. Recall that, for $ z\in S\cap \left\lbrace\left| \textrm{Im }z\right| >\frac{\pi}{2}\right\rbrace  $, $  \textrm{Re }f(z)<\textrm{Re }z$.  Hence, $ f (W)\subset S\cap \left\lbrace \textrm{Re }z<-2\right\rbrace  $, and, by continuity, $ f (W) $ is the  left-unbounded region delimited by $ f(\gamma_{\underline{s}} )$, $ f(\widetilde{\gamma}_{\underline{s}} )$ and $ f(\left\lbrace z\in S\colon \textrm{Re }z= \textrm{Re }z_0\right\rbrace ) \subset S\cap \left\lbrace \textrm{Re }z<-2\right\rbrace$. 
		We can apply the same argument inductively to see that $ f^n (W)\subset S\cap \left\lbrace \textrm{Re }z<-2\right\rbrace  $, for all $ n\geq 0 $, as claimed.
		
		Therefore, $ W $ is an open set which never enters the Baker domain, so $ W\subset \mathcal{J}(f) $, leading to a contradiction.

		\item {  (Internal dynamics)} 
		
		We have to prove that, for $ t\leq -2 $, \[ f(\gamma_{\underline{s}}(t))=\gamma_{\sigma({\underline{s}})}(F(t)).\]

			First observe that, since $ F $ is an increasing map, $ F(t)<-2 $ for $ t\leq-2 $, so $ \gamma_{\sigma({\underline{s}})}(F(t)) $ is defined. 
		
		To construct the point $ \gamma_{\sigma({\underline{s}})}(F(t)) $ we use the sequence of squares $ \left\lbrace D^{F(t), \sigma({\underline{s}})}_n\right\rbrace _n $. Therefore, the $ n $-th square has right-hand side located at $ \left\lbrace x= F^n(F(t))= F^{n+1}(t)\right\rbrace  $ and it is in the half-strip $\Omega_{s_{n+1}} $. Hence, $ D^{F(t), \sigma({\underline{s}})}_n= D^{t,{\underline{s}}}_{n+1} $. Moreover,\[ Q^{F(t), \sigma({\underline{s}})}_n=\phi_{s_1}\circ\dots\circ \phi_{s_{n+1}}\left( \overline{D^{F(t), \sigma({\underline{s}})}_{n+1}}\right) = \phi_{s_1}\circ\dots\circ \phi_{s_{n+1}}\left( \overline{D^{t, {\underline{s}}}_{n+2}}\right) =f(Q^{t,{\underline{s}}}_{n+1}).\]
		Then, \[\gamma_{\sigma({\underline{s}})}(F(t))=\bigcap\limits_{n\geq0} Q^{F(t), \sigma({\underline{s}})}_n =\bigcap\limits_{n\geq0} f(Q^{t,{\underline{s}}}_{n+1})=f( \bigcap\limits_{n\geq0} Q^{t, {\underline{s}}}_{n+1}) =f(\gamma_{\underline{s}}(t)),\] as desired.
	\end{enumerate}
\end{proof}

Escaping tails are mapped among them following the symbolic dynamics given by its itinerary: if $ \sigma $ denotes the shift map in $ \Sigma_2 $ and $ {\underline{s}}\in\Sigma_2 $, we have $ f(\gamma_{\underline{s}})\subset \gamma_{\sigma({\underline{s}})} $. 
Moreover, we claim that, as a consequence of Proposition \ref{teo-curves-of-escaping-points} (c), this last inclusion is strict. Indeed, recall that, for all $ t_0\leq -2 $, it holds $ F(t_0)<t_0 $. Hence,  Proposition \ref{teo-curves-of-escaping-points} (c) implies \[ f(\gamma_{{\underline{s}}}(\left\lbrace t\colon t\leq t_0 \right\rbrace ))=\gamma_{\sigma({\underline{s}})}(\left\lbrace t\colon t\leq F(t_0) \right\rbrace )\subset \gamma_{\sigma({\underline{s}})}(\left\lbrace t\colon t\leq t_0 \right\rbrace ),\] where the last inclusion is strict.

Next, we define the dynamic rays as the natural extension of the escaping tails: we enlarge a given escaping tail $ \gamma_{\underline{s}} $ by adding to it all points in $ \widehat{S} $ which  are eventually mapped to $ \gamma_{\sigma^n({\underline{s}})} $, for some $ n\geq 0 $ (see Fig. \ref{fig-hairs}).  Next theorem includes the formal definition as well as the corresponding extension of the dynamical properties of the escaping tails. Moreover, a new property is proven, showing the continuity of the parametrization the hairs with respect to the itinerary, analogously to \cite[Lemma 3.2]{rempe2007}.

\begin{thm}{\bf (Dynamic rays)}\label{teo-dynamic-ray}
		Let $ {\underline{s}}\in \Sigma_2 $. Let us define the {\em dynamic ray} (or {\em hair}) of sequence $ \underline{s} $ as 
	$\gamma^\infty_{\underline{s}} \colon \left( -\infty, +\infty\right)   \to \mathcal{I}^-_S$ such that, if $ n\geq 0 $ with $ F^n(t)<-2 $, then \[\gamma^\infty_{\underline{s}}(t)\coloneqq \phi_{s_0}\circ\dots\circ\phi_{s_{n-1}}(\gamma_{\sigma^n({\underline{s}})}(F^n(t))).\]
	The following properties hold.
	\begin{enumerate}[label={\em (\alph*)}]
		\item{\em (Well-defined)} Dynamic rays are well-defined, in the sense that the definition does not depend on $ n $. Moreover,  $ \gamma^\infty_{\underline{s}} $ is actually a curve and contains all left-escaping points with itinerary $ \underline{s} $.
		\item {\em (Internal dynamics)} 	For $ t\in\mathbb{R} $, it holds \[ f(\gamma^\infty_{\underline{s}}(t))=\gamma^\infty_{\sigma({\underline{s}})}(F(t)),\] where $ \sigma  $ denotes the shift map and $ F(t)=t-e^{-t} $.
		\item {\em (Continuity between rays)}\label{lemma-continuity-rays} Let $ n_0 \in\mathbb{N}$ and $ \underline{s}\in\Sigma_2$. Let us denote by $ \Sigma_2({\underline{s}}, n_0) $ the set of all sequences $ \widetilde{{\underline{s}}}\in\Sigma_2 $ which agree with $ {\underline{s}}$ in the first $ n_0 +1$ entries. Then, for all $ t_0\in\mathbb{R} $ and $ \varepsilon >0 $, there exists $ n_0 $ such that \[\left| \gamma^\infty_{\underline{s}}(t)-\gamma^\infty_{\widetilde{{\underline{s}}}}(t)\right| <\varepsilon,\] for all $ t\leq t_0 $ and $ \widetilde{{\underline{s}}}\in \Sigma_2({\underline{s}}, n_0)$.
	\end{enumerate}
\end{thm}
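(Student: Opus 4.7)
The plan is to derive parts (a) and (b) as formal consequences of Proposition \ref{teo-curves-of-escaping-points}, and to prove (c) by combining the location of escaping tails far to the left with the uniform $\rho$-contraction of inverse branches from Proposition \ref{remark_contract}.

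For (a), independence of the definition on the choice of $n$ reduces to the identity
\[ \phi_{s_n}\bigl(\gamma_{\sigma^{n+1}(\underline{s})}(F^{n+1}(t))\bigr)=\gamma_{\sigma^n(\underline{s})}(F^n(t)), \]
which follows from Proposition \ref{teo-curves-of-escaping-points}(c) applied to $\sigma^n(\underline{s})$ at parameter $F^n(t)$, together with the itinerary property $\gamma_{\sigma^n(\underline{s})}(F^n(t))\in\Omega_{s_n}$ that pins down the correct inverse branch. Continuity of $\gamma^\infty_{\underline{s}}$ on each piece $\{t:F^n(t)\leq-2\}$ is then inherited from that of $\gamma_{\sigma^n(\underline{s})}$ and of the $\phi_i$; since $F$ is an increasing homeomorphism of $\mathbb{R}$ with every orbit tending to $-\infty$, these pieces exhaust $\mathbb{R}$. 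To show every left-escaping $z$ with itinerary $\underline{s}$ lies on $\gamma^\infty_{\underline{s}}$, I would pick $N$ with $\textrm{Re}\,f^n(z)\leq-2-\pi$ for $n\geq N$; uniqueness (Proposition \ref{teo-curves-of-escaping-points}(b)) then gives $f^N(z)=\gamma_{\sigma^N(\underline{s})}(t_0)$ for some $t_0\leq-2$, and iterating the inverse branches prescribed by the itinerary recovers $z=\gamma^\infty_{\underline{s}}(F^{-N}(t_0))$. For (b), choosing $n$ with $F^n(t)\leq-2$ and applying $f$ to the defining formula cancels the outermost $\phi_{s_0}$ and leaves exactly the defining expression for $\gamma^\infty_{\sigma(\underline{s})}(F(t))$ using $n-1$ branches.

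Part (c) is the main obstacle. Given $t_0$ and $\varepsilon>0$, I would choose $n_0$ so that $F^{n_0+1}(t_0)\leq-2-\pi$, and for $\widetilde{\underline{s}}\in\Sigma_2(\underline{s},n_0)$ and $t\leq t_0$ write
\[ \gamma^\infty_{\underline{s}}(t)=\Phi\bigl(\gamma_{\sigma^{n_0+1}(\underline{s})}(F^{n_0+1}(t))\bigr), \qquad \gamma^\infty_{\widetilde{\underline{s}}}(t)=\Phi\bigl(\gamma_{\sigma^{n_0+1}(\widetilde{\underline{s}})}(F^{n_0+1}(t))\bigr), \]
where the outer composition $\Phi=\phi_{s_0}\circ\cdots\circ\phi_{s_{n_0}}$ is the same for both, since the first $n_0+1$ symbols coincide. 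By the square construction in the proof of Proposition \ref{teo-curves-of-escaping-points}, both inputs of $\Phi$ lie in $S$ with real parts in $[F^{n_0+1}(t)-\pi,F^{n_0+1}(t)]\subset(-\infty,-2-\pi]$, a convex subregion of $S\smallsetminus\overline{V}$ where the $\rho$-distance equals the Euclidean one and is bounded by some universal constant $C\leq 3\pi$.

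The delicate step is producing a uniform contraction factor for the whole of $\Phi$. I would fix $N_1$ with $F^{N_1}(t_0)\leq-2$ (possible since $F^n(t_0)\to-\infty$) and split $\Phi=(\phi_{s_0}\circ\cdots\circ\phi_{s_{N_1-1}})\circ(\phi_{s_{N_1}}\circ\cdots\circ\phi_{s_{n_0}})$. For each $i$ in the inner range $N_1\leq i\leq n_0$, monotonicity of $F$ together with $t\leq t_0$ gives $F^i(t)\leq F^i(t_0)\leq-2$, so by Proposition \ref{teo-curves-of-escaping-points}(a) the intermediate pullbacks $\gamma_{\sigma^i(\underline{s})}(F^i(t))$ and $\gamma_{\sigma^i(\widetilde{\underline{s}})}(F^i(t))$ both lie in the half-strip $\{\textrm{Re}\,w\leq-2\}$; Proposition \ref{remark_contract}(b) then applies at each of these $n_0+1-N_1$ steps with a fixed constant $\lambda=\lambda(-2)<1$, while the remaining outer $N_1$ steps are non-expansive by Proposition \ref{remark_contract}(a). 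Combining,
\[ \bigl|\gamma^\infty_{\underline{s}}(t)-\gamma^\infty_{\widetilde{\underline{s}}}(t)\bigr|\leq\rho\bigl(\gamma^\infty_{\underline{s}}(t),\gamma^\infty_{\widetilde{\underline{s}}}(t)\bigr)\leq\lambda^{n_0+1-N_1}\,C, \]
and enlarging $n_0$ if necessary drives the right-hand side below $\varepsilon$ uniformly in $t\leq t_0$ and in $\widetilde{\underline{s}}\in\Sigma_2(\underline{s},n_0)$.
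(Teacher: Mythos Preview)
Your proposal is correct and follows essentially the same strategy as the paper. Parts (a) and (b) are handled identically: well-definedness reduces to the functional equation of the escaping tails, and the internal dynamics follows by cancelling the outermost inverse branch.

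For part (c) the underlying idea is the same (many applications of uniformly contracting inverse branches kill a bounded initial separation), but the packaging differs slightly. The paper argues set-wise: for $t\leq-2$ it observes that both $\gamma^\infty_{\underline{s}}(t)$ and $\gamma^\infty_{\widetilde{\underline{s}}}(t)$ lie in the \emph{same} pulled-back square $Q^{t,\underline{s}}_{n_0-1}=\phi_{s_0}\circ\dots\circ\phi_{s_{n_0-1}}(\overline{D^{t,\underline{s}}_{n_0}})$, and bounds $\textrm{diam}\,Q^{t,\underline{s}}_{n_0-1}\leq\lambda^{-n_0}\sqrt{2}\pi$ using that every intermediate pullback of the square sits in $\{\textrm{Re}\,z<0\}$. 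The case $t_0>-2$ is then reduced to the previous one by a fixed number $n_1$ of outer, merely non-expanding, inverse branches. You instead track the $\rho$-distance between the two tail points directly and split $\Phi$ into an inner uniformly contracting block and an outer non-expanding block; this is exactly the same mechanism with the two cases merged into one. The set-based version has the small advantage that the location of the \emph{entire} pulled-back square in $\{\textrm{Re}\,z<0\}$ is already established in the proof of Proposition~\ref{teo-curves-of-escaping-points}, so the hypothesis of Proposition~\ref{remark_contract}(b) is met for the whole set rather than only for the two endpoints; in your version one should note that at each inner step the two points lie in the same convex half-strip $\Omega_{s_i}\cap\{\textrm{Re}\,z\leq-2\}$, so the relevant geodesic stays in $S_{-2}$ as well.
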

		\begin{figure}[htb!]\centering
	\includegraphics[width=12cm]{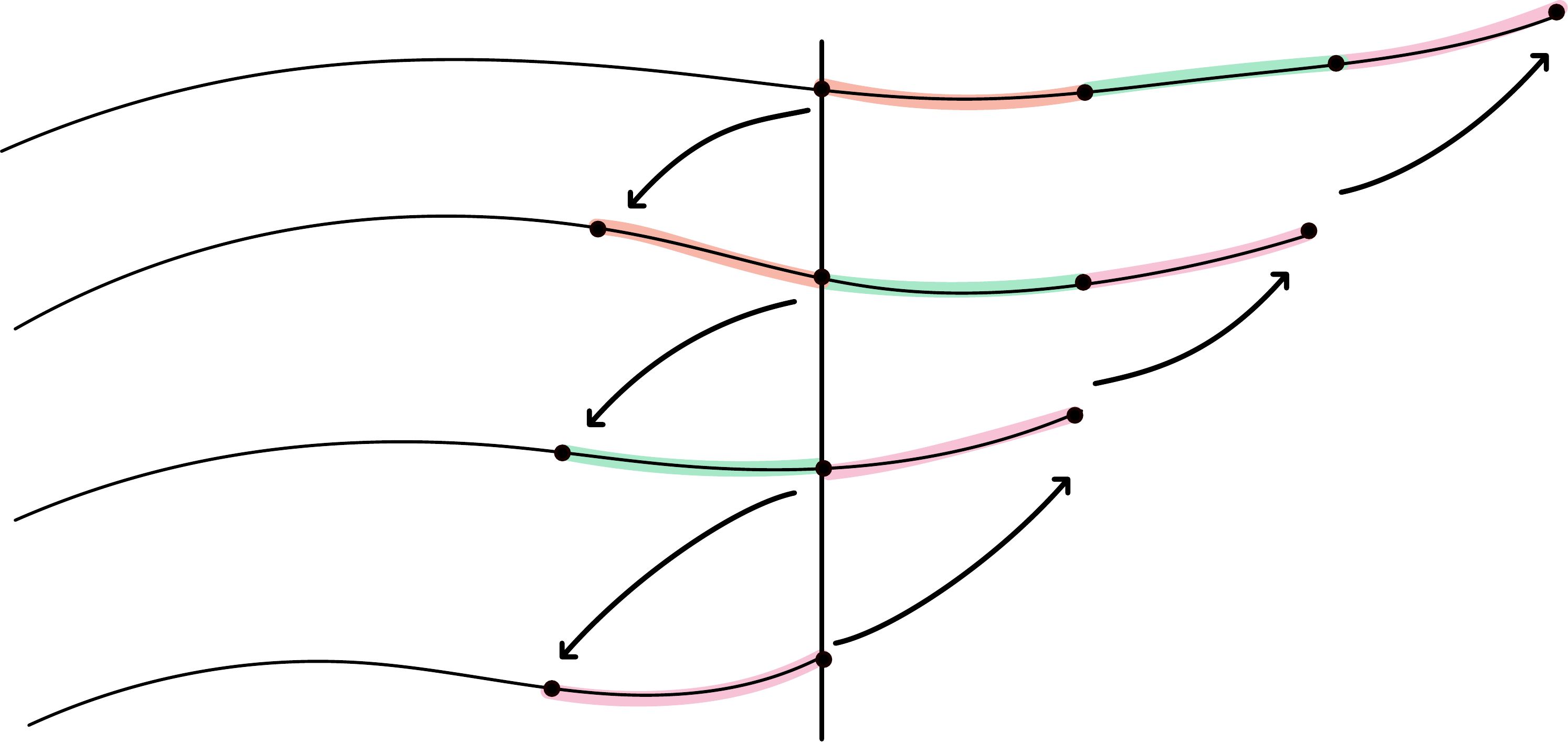}
	\setlength{\unitlength}{12cm}
	\put(-0.62, 0.09){\footnotesize $f$}
	\put(-0.59, 0.245){\footnotesize $f$}
	\put(-0.57, 0.38){\footnotesize $f$}
	\put(-0.38, 0.09){\footnotesize $\phi_{s_2}$}
	\put(-0.21, 0.245){\footnotesize $\phi_{s_1}$}
	\put(-0.05, 0.38){\footnotesize $\phi_{s_0}$}
	\put(-1, 0.4){\footnotesize $  \gamma_{\underline{s}} $}
	\put(-1.02, 0.26){\footnotesize $  \gamma_{\sigma({\underline{s}})} $}
	\put(-1.02, 0.13){\footnotesize $  \gamma_{\sigma^2({\underline{s}})} $}
	\put(-1.01, 0){\footnotesize $  \gamma_{\sigma^3({\underline{s}})} $}
	\caption{\footnotesize Construction of the hair $ \gamma^\infty_{\underline{s}} $ from the escaping tail $ \gamma_{\underline{s}} $. Intuitively, the process is clear: since the endpoint of the escaping tail is not mapped to the endpoint of the next escaping tail but to a point further to the left, the remaining piece of escaping tail can be added to the previous one by pulling back by the inverse. Repeating the process we get all the points in the ray. }\label{fig-hairs}
\end{figure}

\begin{proof}
		\begin{enumerate}[label={ (\alph*)}]
		\item {(Well-defined)} 
		
Fix $ {\underline{s}}\in\Sigma_2 $ and $ t>-2 $, and let $ m>n $ be such that $ F^m(t)<-2 $ and $ F^n(t)<-2 $. Put $ m=n+l $, with $ l>0 $. We have to see that \[
		\phi_{s_0}\circ \dots\circ\phi_{s_{n-1}}\circ\phi_{s_n}\circ\dots\circ \phi_{s_{n+l-1}} \left( \gamma_{\sigma^{n+l}({\underline{s}})}(F^l(F^n(t)))\right) = \phi_{s_0}\circ \dots\circ\phi_{s_{n-1}}\left( \gamma_{\sigma^{n}({\underline{s}})}(F^n(t))\right).
		\]Since $ \phi_i $, $ i\in \left\lbrace 0,1\right\rbrace  $, are univalent, this is equivalent to \[
		\phi_n\circ\dots\circ \phi_{s_{n+l-1}} \left( \gamma_{\sigma^{n+l}({\underline{s}})}(F^l(F^n(t)))\right) =  \gamma_{\sigma^{n}({\underline{s}})}(F^n(t)),
		\] and this last equality holds true by the internal dynamics of the escaping tail (Prop. \ref{teo-curves-of-escaping-points}(c)). 
		
		 Finally, in view of Proposition \ref{teo-curves-of-escaping-points}, it is clear that dynamic rays are actually curves and contain all left-escaping points with the same itinerary, proving statement (a).

		\item {(Internal dynamics)}

			We shall assume that $ t>-2 $, otherwise the point $ \gamma^\infty_{\underline{s}}(t) $ is in the escaping tail, where we have already proven the statement. Let $ n $ be such that $ F^n(t)\leq -2 $. Then, applying the known equality for the escaping tails, we have\[ f(\gamma^\infty_{\underline{s}}(t))= f(\phi_{s_0}\circ\dots\circ \phi_{s_{n-1}}(\gamma_{\sigma^n({\underline{s}})}(F^n(t))))=\]\[= \phi_{s_1}\circ\dots\circ \phi_{s_{n-1}}(\gamma_{\sigma^{n-1}(\sigma ({\underline{s}}))}(F^{n-1}(F(t))))=\gamma^\infty_{\sigma({\underline{s}})}(F(t)), \] proving statement (b).
			
		\item {(Continuity between rays)}
		
		Fix $ \underline{s}\in\Sigma_2$ and $ t_0\in\mathbb{R} $. The goal is to determine $ n_0 \in\mathbb{N}$ such that if $ \widetilde{{\underline{s}}}\in\Sigma_2 $ which agree with $ {\underline{s}}$ in the first $ n_0 +1$ entries and $ t\leq t_0 $, then \[\left| \gamma^\infty_{\underline{s}}(t)-\gamma^\infty_{\widetilde{{\underline{s}}}}(t)\right| <\varepsilon.\]
		
		To do so,	first assume $ t_0\leq -2 $ and fix $ \varepsilon>0 $. Let $ \lambda>1 $ be the factor of expansion of $ f $ in $ S\cap \left\lbrace \textrm{Re }z<0\right\rbrace  $ (see Rmk. \ref{remark_expansion}). Let $ n_0 $ be such that $ \frac{1}{\lambda^{n_0}}\sqrt{2}\pi<\varepsilon $. We claim that for $ \widetilde{{\underline{s}}}\in\Sigma_2({\underline{s}},n_0)$ and $ t\leq t_0 $ it holds \[\left| \gamma^\infty_{\underline{s}}(t)-\gamma^\infty_{\widetilde{{\underline{s}}}}(t)\right| <\varepsilon.\]Indeed, by construction we have\[\gamma^\infty_{\underline{s}}(t),\gamma^\infty_{\widetilde{{\underline{s}}}}(t)\in\bigcap\limits_{n=0}^{n_0-1} Q^{t,{\underline{s}}}_n=Q^{t,{\underline{s}}}_{n_0-1}=\phi_{s_0}\circ\dots\circ \phi_{s_{n_0-1}}(\overline{D^{t, {\underline{s}}}_{n_0}}).\]Therefore,\[\textrm{diam } Q^{t,{\underline{s}}}_{n_0}\leq\frac{1}{\lambda^{n_0}}\textrm{diam } \overline{D^{t, {\underline{s}}}_{n_0}}=\frac{1}{\lambda^{n_0}}\sqrt{2}\pi<\varepsilon,\]implying that $ \left| \gamma^\infty_{\underline{s}}(t)-\gamma^\infty_{\widetilde{{\underline{s}}}}(t)\right| <\varepsilon $, as desired.
		
		Now assume $ t_0>-2 $. 
		Choose $ n_1 $ such that $ F^{n_1}(t_0)<-2 $ (and, hence, $ F^{n_1}(t)<-2 $, for all $ t\leq t_0 $). By the previous reasoning, we can find $ n_0 $ such that $ \left| \gamma^\infty_{\sigma^{n_1}({\underline{s}})}(t)-\gamma^\infty_{\widetilde{{\underline{s}}}}(t)\right| <\varepsilon  $, for $ \widetilde{{\underline{s}}}\in \Sigma_2(\sigma^{n_1}({\underline{s}}),n_0) $ and $ t\leq -2 $. Take $ n\coloneqq n_0+n_1 $ and let us check that the property of the lemma is satisfied.
		
		Indeed, take $ \widetilde{{\underline{s}}}\in \Sigma_2({\underline{s}}, n) $. Then, $ \sigma^{n_1}(\widetilde{{\underline{s}}})\in \Sigma_2(\sigma^{n_1}({\underline{s}}), n_0) $ and $ F^{n_1}(t)<-2 $, so
		\[\left| \gamma^\infty_{\sigma^{n_1}({\underline{s}})}(F^{n_1}(t))-\gamma^\infty_{\sigma^{n_1}(\widetilde{{\underline{s}}})}(F^{n_1}(t))\right| <\varepsilon .\] 
		Since applying the inverses $ \phi_i $, $ i\in\left\lbrace 0,1\right\rbrace  $ does not increase the distance between points, we get 	\[\left| \gamma^\infty_{{\underline{s}}}(t)-\gamma^\infty_{\widetilde{{\underline{s}}}}(t)\right|= \left| \phi_{s_0}\circ\dots\circ\phi_{n_1-1} (\gamma^\infty_{\sigma^{n_1}({\underline{s}})}(F^{n_1}(t)))- \phi_{s_0}\circ\dots\circ\phi_{n_1-1}(\gamma^\infty_{\sigma^{n_1}(\widetilde{{\underline{s}}})}(F^{n_1}(t)))\right|\leq\]\[\leq \left| \gamma^\infty_{\sigma^{n_1}({\underline{s}})}(F^{n_1}(t))-\gamma^\infty_{\sigma^{n_1}(\widetilde{{\underline{s}}})}(F^{n_1}(t))\right|<\varepsilon ,\] ending the proof of statement (c).
	\end{enumerate}
\end{proof}
Observe that, by uniqueness, we have $ L^+=\gamma^\infty_{\overline{0}} $ and $ L^+=\gamma^\infty_{\overline{1}} $, implying, in particular, that $ L^\pm\subset \partial U $. Next, we use it to prove new characterization of $ \partial U $, which will be useful in the sequel. 

\begin{prop}{\bf (Characterizations of $ \partial U $)}\label{prop-caracteritzacio-frontera}	\begin{enumerate}[label={\em (\alph*)}]
		\item The boundary of $ U $ consists precisely of the points in $ \mathcal{J}(f) $ which never escape from $ S $, i.e. \[\partial U=\widehat{S}\cap\mathcal{J}(f).\]
		\item 	Every point in $ \partial U $ is in the closure of a dynamic ray, i.e. \[\partial U=\bigcup\limits_{{\underline{s}}\in\Sigma_2} \overline{\gamma^\infty_{\underline{s}}}.\]
	\end{enumerate}
\end{prop}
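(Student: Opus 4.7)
The strategy is to prove the easy inclusions of both parts first, and then the hard direction of each. The inclusion $\partial U\subseteq \widehat{S}\cap \mathcal{J}(f)$ of (a) follows from $U\subseteq S$ together with forward invariance of $\partial U$ (a consequence of $\partial U\subseteq \mathcal{J}(f)$ and $f(\mathcal{J}(f))\subseteq\mathcal{J}(f)$). For the easy direction of (b) I first check that every dynamic ray lies in $\partial U$: the escaping tails are in $\partial U$ by the nested-squares construction of Proposition~\ref{teo-curves-of-escaping-points} (each $Q^{t,\underline{s}}_n$ meets $U$ since $f(D^{t,\underline{s}}_{n+1})$ crosses $\mathbb{R}\subset U$ and $f^{-1}(U)\cap S=U$), and the extension to full dynamic rays preserves $\partial U$: pulling back any approximating sequence $U\ni w_m\to w\in \partial U\smallsetminus[1,\infty)$ by $\phi_i$ yields points in $f^{-1}(U)\cap S=U$ converging to $\phi_i(w)$, whence $\phi_i(w)\in \overline{U}\cap \mathcal{J}(f)=\partial U$. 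Since $\partial U$ is closed, $\bigcup_{\underline{s}}\overline{\gamma^\infty_{\underline{s}}}\subseteq \partial U$.

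For the hard inclusion of (a), suppose $z_0\in (\widehat{S}\cap \mathcal{J}(f))\setminus \partial U$. As $L^\pm\subseteq\partial U$, $z_0\in \textrm{Int}(S)$ and there is $\varepsilon>0$ with $B(z_0,\varepsilon)\cap \overline{U}=\emptyset$; moreover every $f^k(z_0)\in \textrm{Int}(S)$, since otherwise $f^k(z_0)\in L^\pm\subseteq \partial U$ and backward invariance of $\partial U$ in $S$ would force $z_0\in \partial U$. Montel's theorem at $z_0\in \mathcal{J}(f)$ gives $n\geq 1$ and $w\in B(z_0,\varepsilon)$ with $f^n(w)=1\in U$; let $W$ be the Fatou component containing $w$. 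Connectivity of $W$ combined with $L^\pm\subseteq \mathcal{J}(f)$ forces $W\subseteq S$; shrinking $\varepsilon$ (using continuity of $f^k$ at $z_0$ and $f^k(z_0)\in \textrm{Int}(S)$) ensures $f^k(w)\in S$ for every $k\leq n$, so by the same connectivity argument $f^k(W)\subseteq S$ for every $k\leq n$. Therefore $W\subseteq \widehat{S}\cap \mathcal{F}(f)=U$, so $W=U$, contradicting $B(z_0,\varepsilon)\cap \overline{U}=\emptyset$.

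For the hard inclusion of (b), take $z\in \partial U$; by (a) and $\mathbb{R}\subset U$, $z$ admits a well-defined itinerary $\underline{s}$. I would approximate $z$ by $\gamma^\infty_{\underline{s}}(t)$ via contraction of the pullback $\psi_N=\phi_{s_0}\circ\cdots\circ\phi_{s_{N-1}}$. By Lemma~\ref{lemma-non-ocsillating-escaping}, $\textrm{Re}\,f^n(z)\leq M$ for some $M$, and since the orbit lies in $\widehat{S}\cap \mathcal{J}(f)\subseteq S\smallsetminus \overline{V}$ one has $|f'|\geq c(M)>1$ uniformly on $\{\textrm{Re}\leq M\}\cap \overline{S\smallsetminus V}$, giving $|\psi_N'(f^N(z))|\leq c(M)^{-N}$. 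Choosing $t$ with $F^N(t)=\textrm{Re}\,f^N(z)$ (for $N$ so large that $F^N(t)\leq -2-\pi$), both $f^N(z)$ and $\gamma_{\sigma^N(\underline{s})}(F^N(t))$ sit in the same square $D^{t,\sigma^N(\underline{s})}_0$ of diameter $\sqrt{2}\pi$, and the mean value theorem applied to $\psi_N$ gives $|z-\gamma^\infty_{\underline{s}}(t)|\leq c(M)^{-N}\sqrt{2}\pi\to 0$. The principal obstacle is the case where $\textrm{Re}\,f^N(z)$ stays bounded away from $-\infty$ (for instance, at a repelling periodic orbit in the right part of $S$), where no such $N$ exists; I would then perturb the itinerary and invoke the continuity-between-rays estimate (Theorem~\ref{teo-dynamic-ray}(c)), approximating $z$ by $\gamma^\infty_{\widetilde{\underline{s}}}(t)$ for some $\widetilde{\underline{s}}$ agreeing with $\underline{s}$ in the first $N$ entries but whose dynamic ray extends into the far-left region where the square construction applies.
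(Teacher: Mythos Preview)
Your argument for (a) contains a circularity. Montel hands you $n$ and $w\in B(z_0,\varepsilon)$ with $f^n(w)=1$, and you then ``shrink $\varepsilon$'' so that $f^k(w)\in S$ for $k\leq n$; but $n$ was produced from the original $\varepsilon$, and after shrinking you must invoke Montel again, obtaining a new $w'$ and a possibly larger $n'$. Nothing guarantees this process terminates. The paper avoids the issue by taking the \emph{minimal} $n$ with $f^n(B(z_0,\varepsilon))\not\subset S$: then $f^k(B)\subset S$ for $k<n$ is automatic, and since $f^n(B)$ is connected and contains $f^n(z_0)\in S$, it meets $L^\pm$. Thus every neighbourhood of $z_0$ meets a preimage $\Phi_{\underline{s}}(L^\pm)\subset\partial U$, and $z_0\in\overline{\partial U}=\partial U$.

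Your approach to the hard inclusion of (b) has a genuine gap, which you yourself flag but do not resolve. When $\textrm{Re}\,f^N(z)$ never drops below $-2-\pi$ (as happens, e.g., for any repelling periodic $z$) there is no $N$ to which your square comparison applies. The proposed fix via perturbed itineraries $\widetilde{\underline{s}}$ cannot work: every finite point of $\overline{\gamma^\infty_{\widetilde{\underline{s}}}}$ has itinerary $\widetilde{\underline{s}}\neq\underline{s}$, so $z$ does not lie in $\overline{\gamma^\infty_{\widetilde{\underline{s}}}}$, and approximating $z$ by such points says nothing about membership in any single $\overline{\gamma^\infty_{\underline{r}}}$. (There is also a secondary issue: the ``mean value theorem for $\psi_N$'' needs a bound on $|\psi_N'|$ along a path joining the two points, not just at $f^N(z)$, and the relevant domain is not convex; this is exactly what the $\rho$-distance was set up to handle.)

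The paper's argument for (b) is dual to yours and avoids the obstacle entirely: it runs \emph{forward}, by contradiction. If $z\notin\overline{\gamma^\infty_{\underline{s}}}$ with $\underline{s}=I(z)$ non-eventually constant, then $\rho\bigl(f^n(z),f^n(\overline{\gamma^\infty_{\underline{s}}})\bigr)$ is non-decreasing and grows by a fixed factor $\lambda>1$ at every visit to $\Omega_{01}\cup\Omega_{10}\subset\{\textrm{Re}<0\}$, of which there are infinitely many; hence it tends to $\infty$. But $f^N(\overline{\gamma^\infty_{\underline{s}}})$ contains the escaping tail $\gamma_{\sigma^N(\underline{s})}$, and whenever $f^N(z)\in\Omega_{01}$ its $\rho$-distance to that tail is bounded by the width of $\Omega_{01}$ --- a contradiction. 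No control on how far left the orbit travels is needed. The case of eventually constant $\underline{s}$, which your sketch omits, is handled separately by checking that any point off $L^+$ cannot keep itinerary $\overline{0}$.
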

\begin{proof}
		\begin{enumerate}[label={(\alph*)}]
			\item Let us start by proving statement (a). To do so,	we show the following chain of inclusions:\[\partial U\subset \widehat{S}\cap \mathcal{J}(f)\subset\overline{\bigcup\limits_{n\geq0} \bigcup\limits_{\underline{s}\in\Sigma^n_2} \Phi_{\underline{s}}(L^\pm)} \subset \partial U,\]where $ \Sigma_2^n $ denotes the space of finite sequences of two symbols, $ \left\lbrace 0,1 \right\rbrace  $, of length $ n +1$; and if $ \underline{s}\in \Sigma_2^n $, $ \underline{s}=s_0\dots s_{n} $, then \[\Phi_{\underline{s}}\coloneqq\phi_{s_0}\circ\dots\circ \phi_{s_n}.\]
			
			The first inclusion comes straightforward from the definitions. To prove the second inclusion, consider $ z\in\widehat{S}\cap\mathcal{J}(f) $ and let $ W$ be a neighborhood of $ z $. Without loss of generality, we can assume $ z \notin L^\pm $ and $ W\subset S $. Since $ z\in\mathcal{J}(f) $, by the blow-up property, there exists $ n>0 $ such that $ f^n(W)\not\subset S $. But $ z\in\widehat{S} $, so $ f^n(z)\in S $. Therefore, $ f^n (W) $ intersects $ L^\pm $, and the result follows.
			
			Finally, regarding the third inclusion, it is enough to prove that $ \Phi_{\underline{s}}(L^\pm)\subset \partial U $, for all $ \underline{s}\in\Sigma_2^n $ and $ n\geq 0 $. Hence, fix $ n\geq 0 $ and $ \underline{s}\in\Sigma_2^n $, and consider $ z\in \Phi_{\underline{s}}(L^\pm) $. Since $ f^n(z)\in L^\pm \subset \partial U$,  there exists a sequence of points $\left\lbrace  w_n \right\rbrace _n\subset U $ such that $ w_n\to f^n(z) $. Applying $ \Phi_{\underline{s}} $ to the sequence $\left\lbrace  w_n \right\rbrace _n $,  we have $ \Phi_{\underline{s}}(w_n)\to z  $ with  $ \Phi_{\underline{s}}(w_n)\in U $, since  $ f^{-1}(U)\cap S=U $. Therefore, $ \Phi_{\underline{s}}(L^\pm)\subset \partial U $, as desired. See Figure \ref{fig-frontera-U}.
					
			\item	To prove statement (b), it is enough to show that, given an itinerary $ {\underline{s}}\in\Sigma_2 $, all points in $ \widehat{S}\smallsetminus U $ having this itinerary are precisely the ones in $ \overline{\gamma_{\underline{s}}^\infty} $.
	
	Let us assume first, that $ \underline{s}=\overline{0} $ and there is  $ z\in\partial U $ with this itinerary and $ z\notin L^+ $. Then, $ \textrm{Im }z<\pi $ and, since \[\textrm{Im }f(x+iy)=y-e^{-x}\sin y,\] it follows that there exists $ n\geq 0$ such that $ 0<\textrm{Im }f^n(z)<\frac{\pi}{2} $. Therefore,  $ f^n(z)\in \Omega_{01} $, so $ I(z) $ cannot be constant.
	The analogous argument works for $ \underline{s}=\overline{1} $ and, taking preimages, it also proves the statement for eventually constant sequences. 
	
	Now assume $ \underline{s} $ is a non-eventually constant sequence and there is $ z\in\widehat{S} $, with  $I(z)= \underline{s} $ and $ z\notin \overline{\gamma_{\underline{s}}^\infty } $. Since $  \overline{\gamma_{\underline{s}}^\infty }$ is closed in $ \mathbb{C} $, we have \[\rho (z, \overline{\gamma_{\underline{s}}^\infty })\coloneqq\inf\limits_{w\in\overline{\gamma_{{\underline{s}}}^\infty}} \rho (z,w)>0,\] where $ \rho $ is the distance in $ S\smallsetminus\overline{V} $ defined in \ref{def-rho-distance}.
	
	We note that, since $ f $ is expanding in $ S\smallsetminus\overline{V} $ with respect to $ \rho $, and $ f^n(z) \in S\smallsetminus\overline{V}$, $ f^n(\gamma_{{\underline{s}}}^\infty) \subset S\smallsetminus\overline{V}$, for all $ n\geq 0 $, it holds \[\rho (f^{n+1}(z), f^{n+1}(\overline{\gamma_{\underline{s}}^\infty }))>\rho (f^n(z), f^n(\overline{\gamma_{\underline{s}}^\infty })).\]
	
	Moreover, if both $ f^n(z) $ and $ f^n(\overline{\gamma_{\underline{s}}^\infty }) $ lie in $ \left\lbrace \textrm{Re }z<0 \right\rbrace  $, we have uniform expansion by constant $ \lambda>1  $ (see Rmk. \ref{remark_expansion}), i.e. \[\rho (f^{n+1}(z), f^{n+1}(\overline{\gamma_{\underline{s}}^\infty }))\geq \lambda \rho (f^n(z), f^n(\overline{\gamma_{\underline{s}}^\infty })).\]
	
	Since $ \underline{s }$ is non-eventually constant, there exists an infite increasing sequence $ \left\lbrace n_k\right\rbrace _k $ such that $ f^{n_k}(z), f^{n_k}(\overline{\gamma_{\underline{s}}^\infty })$ lie in $ \Omega_{01}$, so in particular they lie in the left half-plane $ \left\lbrace \textrm{Re }z<0 \right\rbrace  $, where $ f $ expands uniformly by factor $ \lambda >1$. Hence, since $ f $ is always expanding and expands infinitely many times uniformly by factor $ \lambda >1$, we get that \[ \rho (f^n(z), f^n(\overline{\gamma_{\underline{s}}^\infty }))\to\infty, \textrm{ as } n\to\infty.\]
	Hence, we can choose $ N>0 $ such that $ \rho (f^n(z), f^n(\overline{\gamma_{\underline{s}}^\infty }))>2+\pi$ and $ f^n(z)\in\Omega_{01} $, $ f^n(\overline{\gamma_{\underline{s}}^\infty })\subset\Omega_{01} $.
	
	 By construction, $ f^N({\gamma^\infty_{\underline{s}}}) $ contains the escaping tail $ \gamma_{\sigma^N (s)} $, which intersects the vertical segment  $ \left\lbrace z\in S\colon\textrm{Re } z=M\right\rbrace $. Observe that there are no points in $ \Omega_{01} $ at a distance greater than $ 2+\pi $ of $  \gamma_{\sigma^N({\underline{s}})}$, so this leads to a contradiction. 	\end{enumerate}
\end{proof}
			\begin{figure}[htb!]\centering
	\includegraphics[width=10cm]{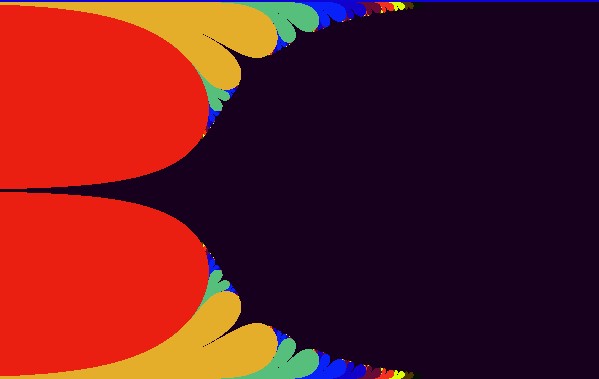}
	\caption{\footnotesize This picture shows the Baker domain (in black) and the regions (in different colors) which are eventually mapped outside $ S $. The boundaries of these regions are precisely $ \Phi_{\underline{s}}(L^\pm)$, $ \underline{s}\in\Sigma_2^n $, for some $ n\geq0 $.  Proposition \ref{prop-caracteritzacio-frontera} tells that $ \partial U $ is precisely the accumulation of those curves. }
	\label{fig-frontera-U}
\end{figure}

We note that the previous proposition allow us to characterize the points in $ \widehat{S} $. Indeed, as noted in Section \ref{sec-3-dynamics-f},  $U\cup \partial U\subset\widehat{S}$; and, from the fact that $ U $ has no more preimages in $ S $ apart from itself, $U\cap \mathcal{F}(f)=\widehat{S}$. The previous proposition characterizes $ \mathcal{J}(f)\cap\widehat{S} $,  implying the following corollary.
\begin{corol}{\bf (Characterization of $ \widehat{S} $)}\label{corol-charctS}
	It holds:\[\widehat{S}=\overline{U}=U\cup\partial U.\]
\end{corol}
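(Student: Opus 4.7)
The plan is to observe that this corollary is essentially a bookkeeping consequence of Proposition \ref{prop-caracteritzacio-frontera} together with facts already gathered in the section. I would first split $\widehat{S}$ into its Fatou and Julia parts by writing
\[
\widehat{S} = \bigl(\widehat{S}\cap\mathcal{F}(f)\bigr)\,\cup\,\bigl(\widehat{S}\cap\mathcal{J}(f)\bigr),
\]
and then handle each piece separately.

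For the Fatou piece, the argument is the one already sketched in the discussion of $\widehat{S}$: any Fatou component meeting $\widehat{S}$ must be forward invariant under $f$ (since its forward orbit never leaves $S$), and because $f: f^{-1}(S)\cap S\to S$ has degree two and $f_{|U}$ also has degree two, $U$ admits no preimages in $S$ other than itself. Together with the known inclusion $U\subset\widehat{S}$, this yields $\widehat{S}\cap\mathcal{F}(f)=U$. For the Julia piece, I would simply quote Proposition \ref{prop-caracteritzacio-frontera}(a), which says precisely $\widehat{S}\cap\mathcal{J}(f)=\partial U$. Combining the two identities gives $\widehat{S}=U\cup\partial U$.

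The second equality $U\cup\partial U=\overline{U}$ is then the standard topological identity for the open set $U\subset\mathbb{C}$, so nothing further is required. There is no genuine obstacle here; the content of the corollary sits entirely in Proposition \ref{prop-caracteritzacio-frontera}(a) and in the degree-counting observation that rules out extra Fatou components inside $\widehat{S}$.
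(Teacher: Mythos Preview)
Your proposal is correct and follows essentially the same approach as the paper: split $\widehat{S}$ into its Fatou and Julia parts, identify the Fatou part with $U$ via the degree-two argument (no extra preimages of $U$ in $S$), and identify the Julia part with $\partial U$ via Proposition~\ref{prop-caracteritzacio-frontera}(a). One small wording issue: saying ``any Fatou component meeting $\widehat{S}$ must be forward invariant'' is not quite the right formulation---what you actually use (and what the paper uses) is that any such component is eventually mapped into some $U_k$, hence into $U$ since the orbit stays in $S$, and then the degree count forces the component to be $U$ itself.
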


From the results of this section, we shall deduce \ref{teo:A}.
\begin{proof}[Proof of \ref{teo:A}]
The first statement of \ref{teo:A} is deduced from statement (a) of Theorem \ref{teo-dynamic-ray}, whereas  the second statement of \ref{teo:A} corresponds to statement (b) in Proposition \ref{prop-caracteritzacio-frontera}.
\end{proof}

\section{Landing and non-landing rays: Proof of \ref{teo:Indec}}\label{sect-5-non-escaping}

We shall discuss now the landing properties of the dynamic rays defined in the previous section. More precisely, we devote the section to prove \ref{teo:Indec}, which asserts that for uncountably many sequences the dynamic ray  land at some point; while for uncountable many others the dynamic ray does not land and its accumulation set (in the Riemann sphere) is an indecomposable continuum. 

We proceed as follows. First of all, we define precisely what we mean for a ray to land, introducing the notion of \textit{landing set}. We also require the notion of \textit{non-escaping set} to relate the accumulation set of a dynamic ray with the non-escaping points having the same itinerary. Afterwards, we classify the sequences $ \underline{s}\in\Sigma_2 $ according to the nature of its landing set, resulting in the different landing behaviours claimed in \ref{teo:Indec}.

\begin{defi}{\bf (Landing set of a ray)}\label{def-end-set}	
		Let $ {\underline{s}}\in\Sigma_2 $ and let $ \gamma_{\underline{s}}^\infty $ be the dynamic ray of sequence $ {\underline{s}} $. We define the {\em landing set} $ L_{\underline{s}} $ of the ray $ \gamma_{\underline{s}}^\infty $ as the set of values $ w\in \widehat{\mathbb{C}} $ for which there is a sequence $ \left\lbrace t_{n}\right\rbrace _n\subset \mathbb{R} $ such that $ t_n\to  +\infty$ and $\gamma^\infty_{\underline s}(t_n)\to w $, as $ n\to\infty $. If $ L_{\underline{s}}=\left\lbrace w\right\rbrace  $,  we say that the dynamic ray $ \gamma_{\underline{s}}^\infty $ \textit{lands} at $ w $.
\end{defi}

Observe that, by Proposition \ref{prop-caracteritzacio-frontera}(b), $ \gamma^\infty_{\underline{s}}\cup L_{\underline{s}}$ contains all the points in $ \partial U $ with itinerary $ \underline{s} $, so \[
\gamma^\infty_{\underline{s}}\cup L_{\underline{s}}= \left\lbrace z\in\partial U\colon I(z)={\underline{s}}\right\rbrace .
\] Therefore, all non-escaping points with itinerary $ {\underline{s}}$ are in $ L_{\underline{s}} $, but {a priori} $ L_{\underline{s}}$ may contain escaping points. This leads us to define the following set.

\begin{defi}{\bf (Non-escaping set)}\label{def-non-escap-set}	
	Let $ {\underline{s}}\in\Sigma_2 $. We define the {\em non-escaping set} $ W_{\underline{s}}$ as the set of points in $ \widehat{S} $ with itinerary $ {\underline{s}} $ which do not escape to infinity.
\end{defi}

	Clearly,  $ W_{\underline{s}}\subset L_{\underline{s}}\cap \mathbb{C} $. Since all escaping points are in a ray, we have $ W_{\underline{s}}= \overline{\gamma^\infty_{\underline{s}}}\smallsetminus \gamma^\infty_{\underline{s}} $. Moreover, $ L_{\underline{s}} $ is always non-empty, compact and connected, whereas $ W_{\underline{s}} $ may be empty. 
	
	We start by describing $ L_{\underline{s}} $ and $ W_{\underline{s}} $ for eventually constant sequences.

\begin{lemma}{\bf (Eventually constant sequences)}\label{lemma-eventually-constant}
	Let $ {\underline{s}}\in\Sigma_2 $.
Then, $ L_{\underline{s}} =\left\lbrace \infty\right\rbrace $ if, and only if, $ {\underline{s}}$ is  eventually constant. In this case, $ W_{\underline{s}}=\emptyset $.
\end{lemma}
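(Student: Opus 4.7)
The forward direction is direct. Assuming $\sigma^N(\underline{s}) = \overline{i}$ for some $N \geq 0$ and $i \in \{0,1\}$, the internal dynamics of Theorem \ref{teo-dynamic-ray}(b) together with the parametrization $\gamma^\infty_{\overline{i}}(s) = s \pm i\pi$ of $L^\pm$ yield $f^N(\gamma^\infty_{\underline{s}}(t)) = F^N(t) \pm i\pi \to \infty$ as $t \to +\infty$, and the continuity of $f^N$ forces $\gamma^\infty_{\underline{s}}(t) \to \infty$, whence $L_{\underline{s}} = \{\infty\}$. The identity $W_{\underline{s}} = \emptyset$ then follows by the same route: any $z \in \widehat{S}$ with itinerary $\underline{s}$ has $f^N(z)$ of itinerary $\overline{i}$, and by Proposition \ref{prop-caracteritzacio-frontera} such an $f^N(z)$ lies either in $U$ (escaping to $+\infty$) or on $L^\pm$ (escaping to $-\infty$), so $z$ itself is escaping.

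For the converse I prove the contrapositive by constructing an element of $W_{\underline{s}} \subset L_{\underline{s}} \cap \mathbb{C}$ whenever $\underline{s}$ is not eventually constant. As a preliminary I note that every point of $U$ has eventually constant itinerary: by Lemma \ref{lemma-non-ocsillating-escaping} iterates satisfy $\operatorname{Re} f^n(z) \to +\infty$, and Lemma \ref{lemma-properties-Sij}(a) then pins them in $\Omega_{00} \cup \Omega_{11}$ for large $n$, giving $s_n = s_{n+1}$. Hence any non-escaping point with itinerary $\underline{s}$ must lie in $\partial U$. Set $p_n \coloneqq \gamma^\infty_{\sigma^n(\underline{s})}(0)$ and $z_n \coloneqq \phi_{s_0} \circ \cdots \circ \phi_{s_{n-1}}(p_n)$, which equals $\gamma^\infty_{\underline{s}}(F^{-n}(0))$; by Theorem \ref{teo-dynamic-ray}(c) together with compactness of $\Sigma_2$ the family $\{p_n\}$ is uniformly bounded, and the $z_n$ lie on the ray at parameters tending to $+\infty$, so any finite accumulation point of $\{z_n\}$ furnishes the desired element of $L_{\underline{s}} \cap \mathbb{C}$.

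Consecutive pairs $z_n, z_{n+1}$ are the images, under the common composition $\phi_{s_0} \circ \cdots \circ \phi_{s_{n-1}}$, of two points on $\gamma^\infty_{\sigma^n(\underline{s})}$ lying in the common region $\Omega_{s_n s_{n+1}}$, whose $\rho$-distance is bounded uniformly in $n$ by Lemma \ref{lemma-properties-Sij}(d). At each index $j$ with $s_j \neq s_{j+1}$ the intermediate iterate lies in $\Omega_{s_j s_{j+1}} \subset \{\operatorname{Re} z < 0\}$, where Proposition \ref{remark_contract}(b) gives uniform $\rho$-contraction by some $\lambda < 1$; since $\underline{s}$ is non-eventually-constant there are infinitely many such switch indices, and a telescoping estimate then yields that $\{z_n\}$ is Cauchy in the $\rho$-metric, hence in the Euclidean metric (as $\rho$ dominates Euclidean distance), so $z_n \to z^* \in \mathbb{C}$. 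Since $z^* \in \partial U \setminus \gamma^\infty_{\underline{s}} = W_{\underline{s}}$, this contradicts $L_{\underline{s}} = \{\infty\}$. The main obstacle will be making the telescoping estimate uniform when the switches of $\underline{s}$ occur with sparse frequency; this is handled by grouping non-switching positions into blocks and exploiting the uniform contraction contributed by the following switch to ensure summability of $\rho(z_n, z_{n+1})$.
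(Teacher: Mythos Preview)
Your forward direction is fine and matches the paper's argument in spirit.

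For the converse, the paper takes a completely different and much shorter route: if $\underline{s}$ is not eventually constant and yet $\gamma^\infty_{\underline{s}}$ landed at $\infty$, then (since it also tends to $-\infty$ as $t\to-\infty$) the ray would disconnect $S$ into two regions $R_1,R_2$. Using $U=\mathrm{Int}(\widehat S)$ (Corollary~\ref{corol-charctS}) and the fact that $\gamma^\infty_{\underline{s}}$ is not a preimage of $L^\pm$, both $R_1$ and $R_2$ meet $U$, contradicting the connectedness of $U$. No estimate on the ray is needed.

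Your constructive approach has a genuine gap. Your Cauchy estimate gives $\rho(z_n,z_{n+1})\le \lambda^{k(n)}M$, where $k(n)$ is the number of switch indices below $n$; summability then requires $\sum_n\lambda^{k(n)}<\infty$. For a non-eventually-constant $\underline{s}$ with very sparse switches (say at positions $m_k$ with $m_{k+1}-m_k$ growing faster than $\lambda^{-k}$) this series diverges, and your ``block-grouping'' does not help: within the $k$-th block you still have $m_{k+1}-m_k$ terms, each only bounded by $\lambda^k M$. Alternatively, if you try to jump over the block and bound $\rho(z_{m_k},z_{m_{k+1}})$ directly, the two pre-images you must compare are $\gamma^\infty_{\sigma^{m_k}(\underline{s})}(0)$ and $\gamma^\infty_{\sigma^{m_k}(\underline{s})}(F^{-(m_{k+1}-m_k)}(0))$, and the second parameter is unbounded in $k$; there is no uniform control on this distance. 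Indeed, for the oscillating sequences built in Proposition~\ref{prop-non-landing} the closure of the ray is an indecomposable continuum containing $\infty$, so one should not expect your sequence $z_n$ to be Cauchy at all. Your method does succeed when $\underline{s}$ is bounded (this is essentially Proposition~\ref{prop-bounded-rays-land}), but it does not cover the general non-eventually-constant case.
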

\begin{proof}
Recall that $ \gamma^\infty_{\overline{0}}= L^+$ and $ \gamma^\infty_{\overline{1}}=L^-$, so $ L_{\overline{0}}= L_{\overline{1}}=\left\lbrace \infty\right\rbrace $. Since preimages of curves landing at $ \infty $ are again curves landing at $ \infty $ and hairs with eventually constant sequence are the preimages of $ L^\pm $, one implication is proven.

Now, assume $ \underline{s} $ is a non-eventually constant sequence, and $ \gamma^\infty_{\underline{s}} $ lands at $ \infty $. Then, $ \gamma^\infty_{\underline{s}} $ divides $ S $ into two regions: $ R_1, R_2$. The absorbing domain $ V $ is contained in one of them, say $ R_1 $, so $ R_1\cap U\neq \emptyset $. We claim that $ R_2\cap U\neq \emptyset $. Indeed, $ R_2\cap \widehat{S}\neq \emptyset $, because the points that leave $ S $ after applying $ f $ are the ones enclosed by $ f^{-1}(L^\pm)\cap S $, and $ \gamma^\infty_{\underline{s}}  $ is not a preimage of $ L^\pm $ (see Fig. \ref{fig-absorbing}, \ref{fig-frontera-U}). The fact that $ U= \textrm{Int}(\widehat{S}) $ (Corol. \ref{corol-charctS}) gives that $ R_2\cap U\neq \emptyset $.
This is a contradiction because $ U $ is connected.
\end{proof}

The goal for the remaining part of the section is to describe the landing and the non-escaping sets for non-eventually constant sequences. First, we deal with the dynamics of the non-escaping points, whose orbit may be bounded or oscillating. It turns out that this only depends on  its itinerary. Moreover, for certain types of sequence, we have a great control on the  orbit of the ray and the non-escaping set, as the following results show.
\begin{defi}{\bf (Types of sequences)}\label{def-bounded-seq}
	Let $ {\underline{s}}\in\Sigma_2 $ be a non-eventually constant sequence. We say that $ {\underline{s}} $ is {\em oscillating} if it contains arbitrarily large sequences of 0's or 1's. Otherwise, we say that $ {\underline{s}} $ is {\em bounded}.
\end{defi}
\begin{prop}{\bf (Dynamics on the non-escaping sets)}\label{prop-dynamics-non-escaping}
Let $ {\underline{s}}\in\Sigma_2 $ and let $ W_{\underline{s}} $ be its corresponding non-escaping set. Then, $ \left\lbrace f^n(W_{\underline{s}})\right\rbrace _n $ is contained in a compact set if and only if $ {\underline{s}}$ is a bounded sequence. In this case, 
 there exists $ R>0 $ such that $f^n (\gamma^\infty_{\underline{s}} )\subset \left\lbrace \textrm{\em Re } z <R\right\rbrace $ and $ f^n(W_{\underline{s}}) \subset \left\lbrace \left| \textrm{\em Re } z\right|  <R\right\rbrace$.
\end{prop}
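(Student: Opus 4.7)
I will prove the two implications separately; in both, I assume $\underline{s}$ is non-eventually constant (for eventually constant $\underline{s}$, Lemma~\ref{lemma-eventually-constant} gives $W_{\underline{s}}=\emptyset$, trivializing the statement). Since $\textrm{Im}\,z\in[-\pi,\pi]$ on $\widehat{S}$, compactness of orbits reduces to boundedness of $\textrm{Re}\,f^n(z)$.

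\emph{Bounded $\Rightarrow$ orbit relatively compact.} Let $N$ be the maximal length of constant substrings of $\underline{s}$. For the upper bound, combine Lemma~\ref{lemma-properties-Sij}(a) and (c): whenever $\textrm{Re}\,f^k(z)>0$, one has $f^k(z)\in\Omega_{00}\cup\Omega_{11}$ (so $s_k=s_{k+1}$), and $\textrm{Re}$ strictly decreases with per-step decrease at most $e^{-\textrm{Re}\,f^k(z)}\leq 1$. Hence entering $\{\textrm{Re}>0\}$ at value $X$ forces a constant substring of length at least $X$ in $\underline{s}$, whence $X\leq N$. This yields a uniform upper bound $R_+$ on $\textrm{Re}\,f^n(z)$ for every $z$ with itinerary $\underline{s}$, and in particular $f^n(\gamma^\infty_{\underline{s}})\subset\{\textrm{Re}\,z<R_+\}$.

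For the lower bound on orbits of $z\in W_{\underline{s}}$, suppose for contradiction $x_n\coloneqq\textrm{Re}\,f^n(z)$ is very negative for some $n$, and write $y_n\coloneqq\textrm{Im}\,f^n(z)$. The explicit formula $f(x+iy)=(x+e^{-x}\cos y)+i(y-e^{-x}\sin y)$, together with $x_{n+1}\leq R_+$ and $|y_{n+1}-y_n|\leq 2\pi$, yields
\[
\cos y_n \;\leq\; (R_+-x_n)\,e^{x_n}\;\to\; 0 \quad\text{and}\quad |\sin y_n|\;\leq\; 2\pi\,e^{x_n}\;\to\; 0 \quad\text{as } x_n\to-\infty.
\]
Since $\cos^2+\sin^2=1$, the only possibility is that $y_n$ is close to $\pi$ or $-\pi$, giving $\cos y_n\approx-1$ and $x_{n+1}\approx x_n-e^{-x_n}$, which is \emph{more} negative than $x_n$. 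Iterating, $x_{n+j}\to-\infty$, so $z$ escapes, contradicting $z\in W_{\underline{s}}$. Hence $\textrm{Re}\,f^n(z)\geq-M_0$ uniformly, and with $R\coloneqq\max(R_+,M_0)$ we obtain $f^n(W_{\underline{s}})\subset\{|\textrm{Re}\,z|<R\}$.

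\emph{Oscillating $\Rightarrow$ orbit not in a compact set.} Since $\underline{s}$ is non-eventually constant, $W_{\underline{s}}\neq\emptyset$; pick $z\in W_{\underline{s}}$ and suppose toward contradiction $|\textrm{Re}\,f^n(z)|\leq R$ for all $n$. Without loss of generality, $\underline{s}$ has $0$-blocks of arbitrary length $k$ at positions $n_k$, so $f^{n_k+j}(z)\in\Omega_{00}$ for $j=0,\dots,k-2$. On $\Omega_{00}$, both $\textrm{Re}$ and $\textrm{Im}$ strictly decrease, and $|f(w)-w|=e^{-\textrm{Re}\,w}\geq e^{-R}$. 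Summing over the $k-1$ steps and bounding the path length by total Re-decrease plus total Im-decrease (each monotone and confined to ranges of length at most $2R$ and $\pi$ respectively),
\[
(k-1)\,e^{-R}\;\leq\;\sum_{j=0}^{k-2}\bigl|f^{n_k+j+1}(z)-f^{n_k+j}(z)\bigr|\;\leq\; 2R+\pi,
\]
so $k\leq 1+(2R+\pi)e^R$, contradicting the arbitrariness of $k$.

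\emph{Main obstacle.} The delicate step is the lower bound in the bounded case: promoting a single deep excursion $x_n\ll 0$ into an escape of the whole forward orbit. This requires checking that the two inequalities constraining $(\cos y_{n+j},\sin y_{n+j})$ keep forcing $y_{n+j}$ near $\pm\pi$ at every subsequent step, with uniform constants independent of $z$. The oscillating direction, by contrast, is essentially a single path-length estimate.
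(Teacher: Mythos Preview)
Your proof is correct and shares the paper's two-implication structure, but two of the three substeps are argued differently.

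For the upper bound with bounded $\underline{s}$, your step-count argument (per-step $\mathrm{Re}$-decrease at most $1$ once $\mathrm{Re}>0$) is a minor variant of the paper's, which instead sets $R=F^{-N}(0)$ and iterates the inequality $\mathrm{Re}\,f(z)>F(\mathrm{Re}\,z)$.

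For the lower bound with bounded $\underline{s}$, your approach is genuinely different and more self-contained. The paper uses the \emph{geometry} of $\partial U$: far to the left, $\partial U$ hugs $L^\pm$ or $\mathbb{R}$, so if $\mathrm{Re}\,f^{n_0}(z)<-R$ yet $\mathrm{Re}\,f^{n_0+1}(z)>-R$, one must have $|\mathrm{Im}\,f^{n_0}(z)|<\pi/3$, whence $\cos y_{n_0}>1/2$ and $\mathrm{Re}\,f^{n_0+1}(z)>R$, contradicting the upper bound. You instead extract everything from the formula: the bounds $x_{n+1}\le R_+$ and $|y_{n+1}-y_n|\le 2\pi$ force $\cos y_n$ small-or-negative and $|\sin y_n|$ small, hence $\cos y_n$ close to $-1$, and then $x_{n+1}<x_n$ propagates; this avoids invoking the shape of $\partial U$, at the cost of the inductive check you correctly flag as the main obstacle (which does go through with a fixed threshold $M_0$ depending only on $R_+$).

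For the oscillating direction, your path-length estimate on $\Omega_{00}$ (monotonicity of $\mathrm{Re}$ and $\mathrm{Im}$, bounding total variation by $2R+\pi$) is cleaner than the paper's per-step bound, which uses $\mathrm{dist}(f^n(z),V)>\varepsilon$ to get $|\mathrm{Im}|>\pi/2+\varepsilon$ and hence a uniform decrement $M=|e^{-R}\cos(\pi/2+\varepsilon)|$.

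One small framing issue: you assert ``since $\underline{s}$ is non-eventually constant, $W_{\underline{s}}\neq\emptyset$'', but this is neither proved nor needed at this point. The paper avoids it by phrasing the converse as: if some $z\in W_{\underline{s}}$ has bounded orbit, then $\underline{s}$ is bounded. Your argument proves exactly this; simply drop the non-emptiness claim.
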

\begin{proof}
Assume first that $ {\underline{s}}\in\Sigma_2 $ is a bounded sequence and $ z\in W_{\underline{s}} $. Then, there exists $ N>0 $ such that $ {\underline{s}} $ does not contain more than $ N $ consecutive 0's and $ N $ consecutive 1's. Take $R \coloneqq F^{-N}(0) $, where $ F(t)=t-e^{-t} $. We claim that $ \textrm{Re }f^n(z)\leq R  $ for all $ n $. Indeed, if it is not the case, there must exist $ n_0 $ such that $ \textrm{Re }f^{n_0}(z)>R $. Then, since $ F $ is increasing, we have \[ \textrm{Re }f^{n_0+1}(z)> \textrm{Re }f^{n_0}(z)-e^{\textrm{Re }f^{n_0}(z)}>R-e^{-R}=F(R).\] Repeating the argument inductively, we get \[\textrm{Re }f^{n_0+N}(z)> \textrm{Re }f^{n_0+N-1}(z)-e^{\textrm{Re }f^{n_0+N-1}(z)}>F^{N-1}(R)-e^{-F^{N-1}(R)}=F^N(R)=0
.\] Therefore, by Lemma \ref{lemma-properties-Sij} (a),  either $ \left\lbrace f^{n_0+k}(z)\right\rbrace _{k=0}^N\subset \Omega_0 $ or $ \left\lbrace f^{n_0+k}(z)\right\rbrace _{k=0}^N\subset \Omega_1 $, so $ \underline{s} $ has $ N+1 $ consecutive 0's. Therefore, $\textrm{Re } f^n(z)< R$, for all $ n\geq 0 $. We note that the constant $ R $ has been chosen to depend only on $ N $ (but not on the particular point $ z\in W_{\underline{s}} $), hence it holds $\textrm{Re } f^n(z)< R$, for all $ n\geq 0 $ and $ z\in W_{\underline{s}} $.

We claim that, under the conditions described above, if $ R $ has been chosen large enough, we also have $ \textrm{Re }f^n(z)>-R $ for all $ n\geq 0 $ and $ z\in W_{\underline{s}} $, and hence $ \left\lbrace f^n(W_{\underline{s}})\right\rbrace _n $ is contained in a compact set.

Without loss of generality, we can assume $ R>3 $ and large enough so that if $ z\in\partial U $ and $ \textrm{Re }z<-R $, then $ \left| \textrm{Im }z\right| <\left( 0, \frac{\pi}{3}\right)$ or $\frac{2\pi}{3} <\left| \textrm{Im }z\right| <1 $. We note that this is possible since $ \phi_1(L^+) $ is a curve landing at $ -\infty $ from both sides, approaching tangentially $ L^- $ and $ \mathbb{R} $; and $ \phi_0(L^-) $ also at lands at $ -\infty $, but approaching $ L^+ $ and $ \mathbb{R} $ (see e.g. Fig. \ref{fig-absorbing}). Then, in order to show that $ \textrm{Re }f^n(z)>-R $ for all $ n\geq 0 $ and $ z\in W_{\underline{s}} $, we proceed by contradiction: let us assume that there exists $ z\in W_{\underline{s}} $ and $ n_0\geq 0 $ such that $ \textrm{Re }f^{n_0}(z)<-R $. Then, since $ z\in W_{\underline{ s}} $, we can assume that $ \textrm{Re }f^{n_0+1}(z)>-R $. Hence, $ 0<\left| \textrm{Im }f^{n_0}(z)\right| <\frac{\pi}{3}$. Then, \[\textrm{Re }f^{n_0+1}(z)=\textrm{Re }f^{n_0}(z)+ e^{-\textrm{Re }f^{n_0}(z)}\cos (\textrm{Im }f^{n_0}(z))\geq \]\[ \geq\textrm{Re }f^{n_0}(z)+\frac{1}{2}e^{-\textrm{Re }f^{n_0}(z)}>-\textrm{Re }f^{n_0}(z)>R. \]This contradicts the assumption that $\textrm{Re } f^n(z)< R$, for all $ n\geq 0 $ and $ z\in W_{\underline{s}} $, proving one implication.

For the other implication, let us assume that $ z\in W_{\underline{s}} $ has a bounded orbit, and let us prove that then $ {\underline{s}} $ is bounded. Let $ R>0 $ be such that $ -R\leq \textrm{Re }f^n(z)\leq R $, for all $ n\geq 0 $, and let $ \varepsilon>0 $ be such that $ \textrm{dist }(f^n(z), V)>\varepsilon $, for all $ n\geq 0 $. We note that, in this case, if $ f^n(z)\in \Omega_{00}\cup\Omega_{01} $, $ \left| \textrm{Im }f^n(z)\right|> \frac{\pi}{2}+\varepsilon$. Let $ M =\left| e^{-R}\cos (\frac{\pi}{2}+\varepsilon)\right| $, and let $ N $ be such that $ R-N M <-R $. We claim that $ \underline{s} $ cannot have more than $ N $ consecutive 0's. On the contrary, assume $ \left\lbrace f^n(z) \right\rbrace _{n=0}^N\subset\Omega_{00}$. Then,
\[\textrm{Re }f^n(z)=\textrm{Re }f^n(z)+e^{-\textrm{Re }f^n(z)}\cos(\textrm{Im }f^n(z))<\textrm{Re }z-M,\] for $ 0\geq n\geq N-1 $,
so \[\textrm{Re }f^N(z)<\textrm{Re }z-NM<-R.\]

Therefore, $ \underline{s} $ cannot have more than $ N $ consecutive 0's. A similar argument can be used to prove that   $ \underline{s} $ cannot have more than $ N $ consecutive 1's; and this proves the other implication.

The existence of  $ R>0 $ such that $f^n (\gamma^\infty_{\underline{s}} )\subset \left\lbrace \textrm{\em Re } z <R\right\rbrace $ and $ f^n(W_{\underline{s}}) \subset \left\lbrace \left| \textrm{\em Re } z\right|  <R\right\rbrace$  is deduced from the previous reasoning, taking into account that escaping points with bounded itinerary cannot go arbitrarily far to the right, since they have to be in $ \Omega_{01} $ (or $ \Omega_{10} $) in a  bounded number of steps.
\end{proof}

Next, we use this control on the dynamic rays and the non-escaping sets for bounded sequences to prove that the non-escaping set is actually a point where the dynamic ray lands.

\begin{prop}{\bf (Rays with bounded sequence land)}\label{prop-bounded-rays-land}
	Let 	$ {\underline{s}}\in \Sigma_2 $ be a bounded sequence. Then, there exists a point $ w_{\underline{s}}\in\mathbb{C} $ such that \[L_{\underline{s}}=W_{\underline{s}}=\left\lbrace w_{\underline{s}} \right\rbrace ,\]  i.e. the dynamic ray $ \gamma^\infty_{\underline{s}}(t) $ lands at the point $ w_{\underline{s}} $.
\end{prop}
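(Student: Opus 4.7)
The plan is to combine the uniform contraction of inverse branches on compact subsets of $S \setminus \overline{V}$ (Proposition \ref{remark_contract}(b)) with the a priori compactness bound from Proposition \ref{prop-dynamics-non-escaping}. Let $R > 0$ be the constant supplied by that proposition, and set
\[
K := \partial U \cap \{\, z \in S : |\operatorname{Re} z| \le R \,\}.
\]
Since $\overline{V} \subset U$ and $\partial U$ is closed and disjoint from $U$, $K$ is a compact subset of $S \setminus \overline{V}$. Because the bound on consecutive $0$'s or $1$'s of $\underline{s}$ is preserved under the shift, the same $R$ works for every $\sigma^k(\underline{s})$, so $K$ contains $\bigcup_{n \ge 0} f^n(W_{\sigma^k(\underline{s})})$ for each $k$.

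First I would show $|W_{\underline{s}}| \le 1$. Since $K \subset S_R$, Proposition \ref{remark_contract}(b) gives a uniform contraction factor $\lambda = \lambda(R) < 1$: if $z_1, z_2 \in S \setminus \overline{V}$ and $\phi_i(z_1), \phi_i(z_2) \in K$, then $\rho(\phi_i(z_1), \phi_i(z_2)) \le \lambda\, \rho(z_1, z_2)$. For $w_1, w_2 \in W_{\underline{s}}$, each pair $(f^n(w_1), f^n(w_2))$ lies in $K$, and the composition $\phi_{s_0} \circ \cdots \circ \phi_{s_{n-1}}$ sends $(f^n(w_1), f^n(w_2))$ back to $(w_1, w_2)$ by definition of the itinerary (the inverse branches are well defined here because $\partial U$ avoids the branch cut $[1, +\infty) \subset U$). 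Since every intermediate pair also lies in $K$, the contraction applies at every step, giving
\[
\rho(w_1, w_2) \le \lambda^n \, \rho(f^n(w_1), f^n(w_2)) \le \lambda^n \, \operatorname{diam}_\rho K \xrightarrow{n \to \infty} 0,
\]
so $w_1 = w_2$.

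Next I would identify $L_{\underline{s}}$ with $W_{\underline{s}}$. As the limit set at $+\infty$ of the continuous curve $\gamma^\infty_{\underline{s}}$, the set $L_{\underline{s}}$ is non-empty, compact and connected in $\widehat{\mathbb{C}}$. Any $w \in L_{\underline{s}} \cap \mathbb{C}$ lies in $\partial U$, and by continuity of $f$ together with the internal dynamics of Theorem \ref{teo-dynamic-ray}(b) it inherits the itinerary $\underline{s}$; Proposition \ref{prop-caracteritzacio-frontera} then places $w$ in $\gamma^\infty_{\underline{s}} \cup W_{\underline{s}}$. The self-accumulation possibility $w = \gamma^\infty_{\underline{s}}(t^*)$ has to be excluded: pushing forward the accumulation relation by $f$ repeatedly via Theorem \ref{teo-dynamic-ray}(b) shows $\gamma^\infty_{\sigma^k(\underline{s})}(F^k(t^*)) \in L_{\sigma^k(\underline{s})}$ for every $k$; choosing $k$ large enough that $F^k(t^*) < -R$ places this point in the escaping tail deep to the left of $K$, while the orbit-level analysis of Step 1 forces its forward iterates back into $K$, giving a contradiction with Remark \ref{remark_expansion} applied along the tail of $\sigma^k(\underline{s})$. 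Hence $L_{\underline{s}} \cap \mathbb{C} \subset W_{\underline{s}}$, which contains at most one element by Step 1.

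Finally, Lemma \ref{lemma-eventually-constant} ensures $L_{\underline{s}} \ne \{\infty\}$, because a bounded sequence is never eventually constant; in particular $L_{\underline{s}} \cap \mathbb{C} \ne \emptyset$. A connected subset of $\widehat{\mathbb{C}}$ containing exactly one finite point together with $\infty$ would be disconnected in $\widehat{\mathbb{C}}$, so $L_{\underline{s}}$ is a singleton $\{w_{\underline{s}}\}$ with $w_{\underline{s}} \in \mathbb{C}$, and then $W_{\underline{s}} = L_{\underline{s}} = \{w_{\underline{s}}\}$. The main obstacle I expect is the rigorous exclusion of self-accumulation in the second step: it requires handling all the shifts $\sigma^k(\underline{s})$ simultaneously, using that the uniform bound $R$ and the contraction factor $\lambda(R)$ are the same for every shift of a bounded sequence.
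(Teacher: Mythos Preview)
Your overall structure matches the paper's exactly: first bound $|W_{\underline{s}}|\le 1$, then exclude escaping points from $L_{\underline{s}}$, then conclude via connectedness and Lemma~\ref{lemma-eventually-constant}. Step~1 and Step~3 are correct and essentially identical to the paper's arguments (you phrase Step~1 via contraction of inverse branches, the paper via expansion of $f$; these are the same thing).

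The gap is in Step~2. Your sentence ``the orbit-level analysis of Step~1 forces its forward iterates back into $K$'' does not hold for the point $\gamma^\infty_{\sigma^k(\underline{s})}(F^k(t^*))$: this point is \emph{escaping}, so its forward iterates go further left, never into $K$. Step~1 only controls iterates of points in $W_{\underline{s}}$, and you cannot assume the accumulation point is in $W_{\sigma^k(\underline{s})}$ without circularity. Invoking Remark~\ref{remark_expansion} ``along the tail'' does not by itself yield a contradiction either, because the approximating ray points $\gamma^\infty_{\sigma^k(\underline{s})}(t_n)$ with $t_n\to+\infty$ can be taken arbitrarily close to the tail, so expansion alone does not force a definite separation.

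The paper's actual argument for Step~2 uses a different mechanism: the second half of Proposition~\ref{prop-dynamics-non-escaping} gives $f^n(\gamma^\infty_{\underline{s}})\subset\{\operatorname{Re} z<R\}$ for \emph{all} $n$. One then picks a point $w_m$ on the ray (with parameter $t_m\ge -2$, hence not on the escaping tail) close to $f^{n_0}(z)$ and with $\operatorname{Re} w_m<-R$. Since $w_m$ is escaping but off the tail, some iterate $f^M(w_m)$ must leave the far-left region; the one-step estimate $|f(z)|>\tfrac{1}{2}e^{-\operatorname{Re} z}$ forces this jump to overshoot to $\operatorname{Re}>R$, contradicting the uniform bound on iterates of the ray. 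You have all the ingredients (Proposition~\ref{prop-dynamics-non-escaping}, the ray/tail distinction), but the contradiction has to be run through a nearby ray point and the quantitative jump estimate, not through the forward orbit of the escaping accumulation point itself.
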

\begin{proof} First, let us prove that $ W_{\underline{s}}$ consists of a single point.  By Proposition \ref{prop-dynamics-non-escaping}, $ W_{\underline{s}}$ is compact. Assume, on the contrary that $ W_{\underline{s}} $ consists of more than one point, so $ \textrm{diam}_\rho(W_{\underline{s}}) >0$. Recall that $ f $ is uniformly expanding in any compact set $ K \subset S\smallsetminus\overline{V}$ with respect to $ \rho $ (see Rmk. \ref{remark_expansion}). Taking $ K $ to be $ W_{\underline{s}} $, we have $ \textrm{diam}_\rho(f^n(W_{\underline{s}}))\to \infty $, which contradicts the fact that $\left\lbrace  f^n(W_{\underline{s}})\right\rbrace _n$ is contained in a compact set (Prop. \ref{prop-dynamics-non-escaping}). Therefore, $ W_{\underline{s}} $ must consist only of one point, so $ W_{\underline{s}}=\left\lbrace w_s\right\rbrace $.
		
To end the proof, it is enough to show that $ L_{\underline{s}}$ cannot contain any escaping point. Indeed, this would imply, together with the previous lemma, that $ L_{\underline{s}}\subset \left\lbrace w_{\underline{s}}, \infty\right\rbrace  $ and, since $ L_{\underline{s}} $ is connected and it cannot be equal to $ \infty $, necessarily $ L_{\underline{s}}=\left\lbrace w_{\underline{s}}\right\rbrace  $.
	
	By Proposition \ref{prop-dynamics-non-escaping}, $ f^n(\gamma^\infty_{\underline{s}} )$ and $f^n (W_{\underline{s}})$ are contained in the half-plane $  \left\lbrace \textrm{Re } z <R\right\rbrace $. Assume the dynamic ray $ \gamma^\infty_{\underline{s}} $ accumulates at an escaping point $ z $. Since $ z $ is escaping and has itinerary $ \underline{s} $, by Theorem \ref{teo-dynamic-ray}(a), there exists $ n_0\geq 0$ such that $ f^{n_0}(z)\in\gamma_{\sigma^{n_0}({\underline{s}})} $ and $ \textrm{Re } f^{n_0}(z) <-R $. 
	
	We note that $ {\sigma^{n_0}({\underline{s}})} $ is also a bounded sequence satisfying that $ f^n(\gamma^\infty_{\sigma^{n_0}({\underline{s}})} )\subset \left\lbrace \textrm{Re } z <R\right\rbrace$; and $ f^{n_0}(z) $ is escaping and 
 $ f^{n_0}(z)\in L_{\sigma^{n_0}({\underline{s}})}$. 	Therefore, there exists 
	 an increasing sequence $ \left\lbrace t_n\right\rbrace_n  \subset \mathbb{R}$ and $ w_n\coloneqq \gamma^\infty_{\underline{s}}(t_n)\to f^{n_0}(z) $, as $ n\to\infty $. Let us choose some $ m $ such that $ t_m\geq -2 $, and hence $ w_m\in\gamma_{\underline{s}}^\infty\smallsetminus\gamma_{\underline{s}} $, and $ \textrm{Re } w_m <-R $. Since $ w_m $ is not in the escaping tail, there exists $ M>0 $ such that $ \textrm{Re } f^M (w_m) >-2+\pi $, $ M $ being the minimal integer satisfying this property. Hence, $ \textrm{Re } f^{M-1}(w_m) <-R $, so $ \textrm{Re } f^{M}(w_m)> R $ (since $ \left| f(z)\right| > \textrm{Re }z $, as shown in the proof of Prop. \ref{teo-curves-of-escaping-points}). 
	 
	 Therefore the property $ f^n(\gamma^\infty_{\sigma^{n_0}({\underline{s}})} )\subset \left\lbrace \textrm{Re } z <R\right\rbrace$ does not hold, leading to a contradiction.
\end{proof}

To end the section, we prove that rays with oscillating sequences do not always land. 
In fact, we are going to prove that, for uncountably many sequences, $ L_{\underline{s}} $ is an indecomposable continuum which contains the ray $ \gamma_{\underline{s}}^\infty $. We follow the ideas of Rempe (\cite[Thm. 3.8.4]{tesi-lasse-rempe}, \cite[Thm. 1.2]{rempe2007}). 

\begin{prop}{\bf (Some rays do not land)}\label{prop-non-landing}
There exist uncountably many dynamic rays $ \gamma^\infty_{\underline{s}} $ which do not land. 
\end{prop}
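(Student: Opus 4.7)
The plan is to exhibit an uncountable subset $T \subset \Sigma_2$ of oscillating sequences for which the accumulation set $L_{\underline{s}}$ contains more than one point, hence the corresponding ray $\gamma^\infty_{\underline{s}}$ does not land. I would take $T$ to be the set of sequences $\underline{s}$ such that for every $N \geq 1$, the sequence $\underline{s}$ contains infinitely many blocks of $0$s of length $\geq N$ and infinitely many blocks of $1$s of length $\geq N$; equivalently, the shift orbit of $\underline{s}$ accumulates on both constant sequences $\overline{0}$ and $\overline{1}$. Such sequences are oscillating in the sense of Definition~\ref{def-bounded-seq}, and $T$ is uncountable, since once the positions of the forced blocks are fixed, the intermediate entries may be chosen freely.

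For $\underline{s} \in T$, the key is to combine the continuity between rays (Theorem~\ref{teo-dynamic-ray}(c)) with the internal dynamics (Theorem~\ref{teo-dynamic-ray}(b)). Fix $t_0 \in \mathbb{R}$ and choose subsequences $n_k, m_k \to \infty$ such that $\sigma^{n_k}(\underline{s})$ begins with a block of $0$s of length tending to $\infty$ and $\sigma^{m_k}(\underline{s})$ begins with a block of $1$s of length tending to $\infty$. Theorem~\ref{teo-dynamic-ray}(c) then gives $\gamma^\infty_{\sigma^{n_k}(\underline{s})}(t_0) \to \gamma^\infty_{\overline{0}}(t_0) \in L^+$ and $\gamma^\infty_{\sigma^{m_k}(\underline{s})}(t_0) \to \gamma^\infty_{\overline{1}}(t_0) \in L^-$, which are two distinct points (one in each of the invariant boundary lines). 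Pulling back via the internal dynamics, the parameters $F^{-n_k}(t_0)$ and $F^{-m_k}(t_0)$ both tend to $+\infty$, and
\[
\gamma^\infty_{\underline{s}}(F^{-n_k}(t_0)) \;=\; \phi_{s_0} \circ \cdots \circ \phi_{s_{n_k-1}}\!\bigl(\gamma^\infty_{\sigma^{n_k}(\underline{s})}(t_0)\bigr),
\]
and analogously for $m_k$. This produces two sequences of points on $\gamma^\infty_{\underline{s}}$, with parameters tending to $+\infty$, obtained by applying long backward compositions to approximants of $\gamma^\infty_{\overline{0}}(t_0)$ or $\gamma^\infty_{\overline{1}}(t_0)$.

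The main obstacle is to show that the two resulting limits in $L_{\underline{s}}$ are distinct for an uncountable subfamily of $T$. By Proposition~\ref{remark_contract}, each $\phi_i$ is a $\rho$-contraction, but the contraction rate degrades to $1$ as one approaches the absorbing domain $V$, so the compositions $\phi_{s_0} \circ \cdots \circ \phi_{s_{n-1}}$ need not converge uniformly with respect to their input. The task is to choose $\underline{s} \in T$ so that the backward orbit remains often enough in regions where the contraction factor is bounded away from $1$---for instance, by requiring that the forced long runs of $0$s and $1$s be sufficiently separated by entries that pull the orbit back into the uniformly contracting half-strips $S_k$ of Proposition~\ref{remark_contract}(b). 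Verifying that an uncountable subfamily of $T$ satisfies this quantitative separation, so that the two limits do not accidentally coincide, is the delicate technical step. I expect it to follow from a pigeonhole-style argument combined with the explicit contraction estimates of Proposition~\ref{remark_contract}(b), showing that only a ``small'' set of itineraries in $T$ would force the two limits to agree, and so uncountably many remain with the two limits distinct.
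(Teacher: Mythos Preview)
Your plan has a genuine gap at the crucial step, and the reasoning around it is in fact backwards. You want to pull back the two distinct targets $\gamma^\infty_{\overline{0}}(t_0)\in L^+$ and $\gamma^\infty_{\overline{1}}(t_0)\in L^-$ through the long compositions $\phi_{s_0}\circ\cdots\circ\phi_{s_{n-1}}$ and argue that the resulting limits in $L_{\underline{s}}$ are distinct. But Proposition~\ref{remark_contract} tells you these compositions are $\rho$-contractions; the \emph{more} uniform the contraction, the \emph{closer} the images of any two inputs become, and in the limit the composition collapses to a constant. Your proposed fix---arranging that the backward orbit spends enough time in regions where the contraction factor is bounded away from $1$---therefore pushes exactly in the wrong direction: it forces the two pulled-back sequences to have the \emph{same} limit, not different ones. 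Nothing in your outline gives a mechanism for keeping the two limits apart, and the vague ``pigeonhole-style argument'' does not address this.

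The paper sidesteps this difficulty entirely by aiming at $\infty$ rather than at two finite points. One builds $\underline{s}=1\overline{0}^{\,n_1}1\overline{0}^{\,n_2}\cdots$ inductively: at stage $j$ the truncated sequence $\underline{s}^j=1\overline{0}^{\,n_1}\cdots 1\overline{0}^{\,n_{j-1}}1\overline{0}$ is eventually constant, so $\gamma^\infty_{\underline{s}^j}$ is a preimage of $L^+$ and lands at $\infty$; pick $t_j$ with $|\gamma^\infty_{\underline{s}^j}(t_j)|>j$, then use Theorem~\ref{teo-dynamic-ray}(c) \emph{directly} (no pullback needed) to choose $n_j$ large enough that $|\gamma^\infty_{\underline{s}}(t_j)|\geq j$ as well. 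This yields $\infty\in L_{\underline{s}}$, and since $\underline{s}$ is not eventually constant, Lemma~\ref{lemma-eventually-constant} forces $L_{\underline{s}}\neq\{\infty\}$, so the ray does not land. The countably many free choices at each stage give uncountably many such $\underline{s}$.
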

\begin{proof}
	First, by Lemma \ref{lemma-eventually-constant}, if we show that, for a non-eventually constant sequence $ {\underline{s}} $, the landing set $ L_{\underline{s}} $ contains $ \infty $, then the ray $ \gamma^\infty_{\underline{s}} $ do not land. Hence, our goal is to construct a non-eventually constant sequence $ {\underline{s}}$ with $ \infty\in L_{\underline{s}} $. 
	
	Let us denote by $ \overline{0}^n $ a block of $ n $ zeroes and by $ \overline{0}$ an infinite block of zeroes. Then, the itinerary $ {\underline{s}} $ that we construct will be of the form ${\underline{s}}= 1\overline{0}^{n_1}1\overline{0}^{n_2}1\overline{0}^{n_3}\dots$ for an infinite sequence $ \left\lbrace n_j\right\rbrace _j$. We choose the $  n_j $'s inductively among countably many choices in each step, leading to uncountably many non-landing rays at the end. 
	
	Assume $ n_1,\dots, n_{j-1} $ have been chosen, and consider the sequence $ {\underline{s}}^j=1\overline{0}^{n_1}\dots1\overline{0}^{n_{j-1}}1\overline{0}$. Then, $ \gamma^\infty_{{\underline{s}}^j} $ is a preimage of $ L^+ $, so it lands at $ \infty $ in both ends. Let us choose $ t_j >-2$ such that $ \left| \gamma_{{\underline{s}}^j}(t_j)\right| >j $. By Theorem \ref{teo-dynamic-ray} (c), there exists $ N_j\in\mathbb{N} $ such that $ \left| \gamma_{{\underline{s}}}(t_j)\right| \geq j $ for all $ s\in\Sigma_2({\underline{s}}^j, N_j) $. We choose $ n_j\geq N_j $.
	
	Let $ {\underline{s}} $ be the sequence constructed in this way. Then, $ {\underline{s}} $  is clearly non-eventually constant, and $ \infty\in L_{\underline{s}} $, since $ \gamma^\infty_{\underline{s}}(t_j)\to\infty $, as $ j\to\infty $,  proving that the ray $ \gamma^\infty_{\underline{s}} $ does not land. Evidently, by symmetry, the same construction interchanging 0's by 1's also gives non-landing rays.
\end{proof}
	\begin{figure}[htb!]\centering
	\includegraphics[width=14cm]{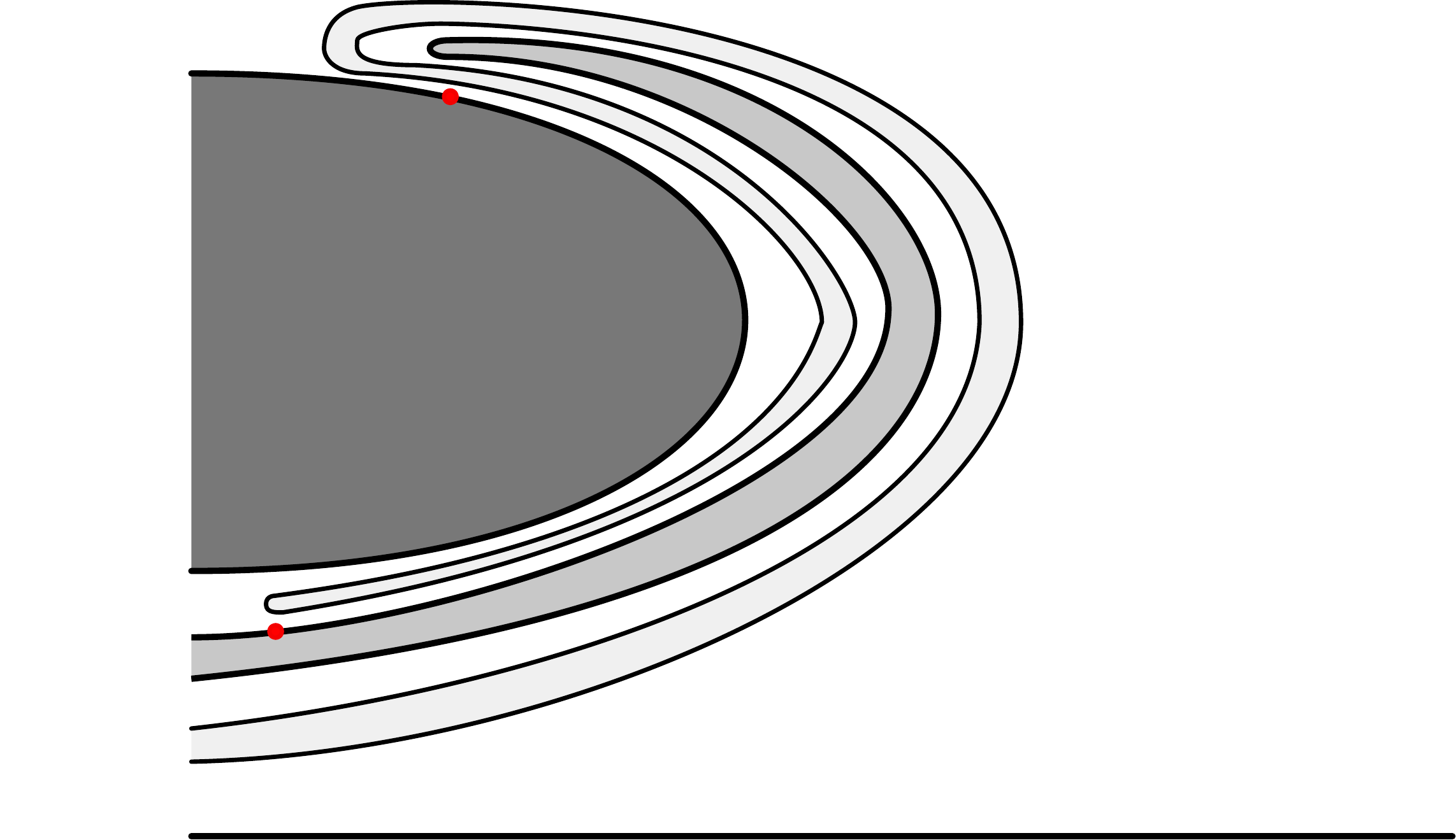}
	\setlength{\unitlength}{14cm}
	\put(-0.95, 0.185){\footnotesize ${\underline{s}}^1= 1\overline{0} $}
	\put(-0.995, 0.13){\footnotesize ${\underline{s}}^2= 1\overline{0}^{n_1}1\overline{0} $}
	\put(-1.045, 0.05){\footnotesize ${\underline{s}}^3= 1\overline{0}^{n_1}1\overline{0}^{n_2}1\overline{0} $}
	\put(-0.88, 0){\footnotesize $\overline{1} $}
	\caption{\footnotesize Schematic representation of the construction of the non-landing ray $ \gamma_{{\underline{s}}}^\infty $, to give a geometric intuition of the proof, showing the first three steps of the induction. The sequence on the right indicates the itinerary of the ray. The first ray that is constructed is the one of sequence $ {\underline{s}}^1 $, which is a preimage of $ L^+ $. In red, it is marked the point $ \gamma_{{\underline{s}}^1}^\infty(t_1) $. In the next step of the induction, it is chosen $ {\underline{s}}_2 $ in such a way that $ \gamma_{{\underline{s}}^2}^\infty $ gets \textit{close} to  $ \gamma_{{\underline{s}}^1}^\infty(t_1) $, so $ \gamma_{{\underline{s}}^2}^\infty $ wraps along $ \gamma_{{\underline{s}}^1}^\infty $. This wrapping is precisely what makes that, in the limit, we get a non-landing ray.}
\end{figure}
\begin{corol}{\bf (Some landing sets are indecomposable continua)}\label{corol-indec}
	The landing set $ L_{\underline{s}} $ of the non-landing rays of Proposition \ref{prop-non-landing} is an indecomposable continuum.
\end{corol}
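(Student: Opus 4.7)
The plan is to apply Curry's Theorem (Theorem~\ref{teo-curry}) with $X = L_{\underline{s}}$, using the dynamic ray $\gamma^\infty_{\underline{s}}$ itself as the simple curve whose closure equals $X$. The key step is to show that $L_{\underline{s}}$ coincides with the closure $\overline{\gamma^\infty_{\underline{s}}}$ in $\widehat{\mathbb{C}}$; since the inclusion $L_{\underline{s}} \subset \overline{\gamma^\infty_{\underline{s}}}$ is immediate, what needs to be proved is the self-accumulation $\gamma^\infty_{\underline{s}} \subset L_{\underline{s}}$, i.e.\ that every point of the ray is approached by the ray at parameter values tending to $+\infty$.

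The main obstacle is precisely this self-accumulation, which requires a refinement of the inductive construction in the proof of Proposition~\ref{prop-non-landing}. At step $j$, besides the existing requirement $|\gamma^\infty_{\underline{s}^j}(t_j)| \geq j$, I would adaptively choose $n_j$ so that the approximating ray $\gamma^\infty_{\underline{s}^j}$ passes within $1/j$ of a prescribed point drawn from a countable dense subset of the previously constructed portion of $\gamma^\infty_{\underline{s}^{j-1}}$, at a parameter value $\tau_j$ with $\tau_j \to +\infty$ as $j \to \infty$. Such a choice is available because $\underline{s}^j$ is eventually constant and hence $\gamma^\infty_{\underline{s}^j}$ is a finite preimage of $L^+$ that lands at $\infty$ at both ends; the continuity between rays (Theorem~\ref{teo-dynamic-ray}(c)) then transfers the passage of $\gamma^\infty_{\underline{s}^j}$ near the prescribed points to $\gamma^\infty_{\underline{s}}$ itself on the appropriate parameter intervals, and the non-expanding behaviour of the branches $\phi_{s_i}$ given by Proposition~\ref{remark_contract} keeps the estimates under control. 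Passing to the limit yields that $\gamma^\infty_{\underline{s}}$ accumulates at $+\infty$ on a dense subset of itself; since $L_{\underline{s}}$ is closed, it accumulates on all of itself, as desired.

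With $L_{\underline{s}} = \overline{\gamma^\infty_{\underline{s}}}$ established, I would verify the remaining hypotheses of Curry's Theorem. $L_{\underline{s}}$ is a continuum, being the nested intersection $\bigcap_T \overline{\gamma^\infty_{\underline{s}}([T,+\infty))}$ of compact connected sets in $\widehat{\mathbb{C}}$. It is the closure of the simple curve $\gamma^\infty_{\underline{s}}$, which limits upon itself by the previous paragraph. It is topologically one-dimensional, since $L_{\underline{s}} \subset \partial U$ has empty interior in $\mathbb{C}$ yet contains the arc $\gamma^\infty_{\underline{s}}$. Finally, it is non-separating in $\widehat{\mathbb{C}}$: the complement contains the simply connected domain $U$, and any other point of $\widehat{\mathbb{C}} \setminus L_{\underline{s}}$ can be joined to $U$ through $\mathcal{F}(f) \cup (\partial U \setminus L_{\underline{s}})$ without crossing $L_{\underline{s}}$, using that $L_{\underline{s}}$ is a proper subset of $\partial U$. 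Curry's Theorem then gives that $L_{\underline{s}}$ is an indecomposable continuum. Beyond the self-accumulation step, extra care may be needed for the non-separating property to rule out subtle separations produced by the intricate topology of $\partial U$.
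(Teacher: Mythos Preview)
Your overall plan---apply Curry's Theorem after showing the ray accumulates on itself and that its closure is non-separating---is exactly the paper's. But your self-accumulation argument has a genuine gap. You claim one can choose $n_j$ so that $\gamma^\infty_{\underline{s}^j}$ passes within $1/j$ of a prescribed point $p_j$ at some parameter $\tau_j$ with $\tau_j\to+\infty$, and you justify this by saying $\gamma^\infty_{\underline{s}^j}$ ``lands at $\infty$ at both ends''. That does not follow: landing at $\infty$ at both ends says nothing about which \emph{finite} points the curve visits at large parameter. The continuity in Theorem~\ref{teo-dynamic-ray}(c) only gives $\gamma^\infty_{\underline{s}^j}(t)\approx\gamma^\infty_{\underline{s}^{j-1}}(t)$ at the \emph{same} $t$, so it puts $\gamma^\infty_{\underline{s}^j}$ near $p_j$ at the parameter where $\gamma^\infty_{\underline{s}^{j-1}}$ already sits at $p_j$, not at a later one. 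Getting a \emph{return} visit at $\tau_j\to+\infty$ is precisely the wrapping you are trying to prove, so the argument is circular as stated.

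The paper takes a different route that avoids this entirely and, moreover, works for every ray from Proposition~\ref{prop-non-landing} without refining the construction. First one proves non-separation (of both $L_{\underline{s}}$ and $\overline{\gamma^\infty_{\underline{s}}}$) directly: if $L_{\underline{s}}$ disconnected $S$, each complementary region would still meet $\widehat{S}$ (the only obstructions to remaining in $S$ are the preimages of $L^\pm$, which $L_{\underline{s}}$ is not), hence meet $U=\mathrm{Int}(\widehat{S})$ by Corollary~\ref{corol-charctS}, contradicting connectedness of $U$. This also repairs your vague non-separating sketch. Then self-accumulation is obtained by a topological trapping argument: flip the $n$-th symbol of $\underline{s}$ to get rays $\gamma^\infty_{\underline{r}^n}$ approximating $\gamma^\infty_{\underline{s}}$ from both sides; since $L_{\underline{s}}$ cannot meet any $\gamma^\infty_{\underline{r}^n}$, if some $\gamma^\infty_{\underline{s}}(t_0)$ lay at positive distance $\varepsilon$ from $L_{\underline{s}}$ the regions bounded by $D(\gamma^\infty_{\underline{s}}(t_0),\varepsilon)$ and the rays $\gamma^\infty_{\underline{r}^{n,\pm}}$ would squeeze $L_{\underline{s}}$ into $\gamma^\infty_{\underline{s}}((-\infty,t_0])$, forcing $\overline{\gamma^\infty_{\underline{s}}}$ to separate the plane---contradicting what was just shown.
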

\begin{proof}
	To prove that $ L_{\underline{s}}  $ is an indecomposable continuum, we shall invoke Curry's Theorem \ref{teo-curry}, after checking that $ L_{\underline{s}} $ does not separate the plane and that $ \gamma^\infty_{\underline{s}}\subset L_{\underline{s}}$.
	
		On the one hand, let us observe that effectively $ L_{\underline{s}}$ cannot separate the plane. We follow the same argument as in the proof of Lemma \ref{lemma-eventually-constant}. Indeed, if $ L_{\underline{s}}$ separates $ \mathbb{C} $, it should also separate the strip $ S $. Let $ R_1 $ be the connected component of $ S\smallsetminus L_{\underline{s}} $ that contains the absorbing domain $ V $, so $ R_1\cap U\neq \emptyset $. Let $ R_2 $ be any other component of $ S\smallsetminus L_{\underline{s}} $. We claim that $ R_2\cap U\neq \emptyset $. Indeed, $ R_2\cap \widehat{S}\neq \emptyset $, because the points that leave $ S $ after applying $ f $ are the ones enclosed by $ f^{-1}(L^\pm)\cap S $, and $ L_{\underline{s}}  $ is not a preimage of $ L^\pm $. The fact that $ U= \textrm{Int}(\widehat{S}) $ gives that $ R_2\cap U\neq \emptyset $.
	This is a contradiction because $ U $ is connected. We note that this argument not only proves that $ L_{\underline{s}}$ cannot separate the plane, but also that neither $ \gamma^\infty_{\underline{s}} $ nor $ \overline{\gamma^\infty_{\underline{s}}} $ can separate the plane.
	
	On the other hand, the proof that $ \gamma^\infty_{\underline{s}}\subset L_{\underline{s}} $ follows the idea of Rempe (\cite[Lemma 3.3]{rempe2007}) based on the fact that dynamic rays accumulate among them. Indeed, $ L_{\underline{s}} $ cannot intersect any dynamic ray different from $ \gamma^\infty_{\underline{s}} $. In particular, $ L_{\underline{s}} $ does not intersect the dynamic rays $ \gamma^\infty_{\underline{r}^n} $, defined by \[ \underline{r}^n \coloneqq s_0s_1\dots s_{n-1}r_n s_{n+1}s_{n+2}\dots,\]where $ r_n=0 $, if $ s_n=1 $, and $ r_n=1 $, if $ s_n=0 $. By Theorem \ref{teo-dynamic-ray} (c), it is clear that $ \gamma^\infty_{\underline{r}^n}\to \gamma^\infty_{\underline{s}} $, as $ n\to\infty $, uniformly on every interval $ \left( -\infty, t_0\right]  $, $ t_0\in\mathbb{R} $. Moreover, from the fact that $ \underline{s} $ is not eventually constant and escaping tails are ordered vertically following the (inverse) lexicographic order, it follows that $ \left\lbrace \gamma^\infty_{\underline{r}^n}\right\rbrace _n $ approximates $  \gamma^\infty_{{\underline{s}}}$ from above and from below. Therefore, we redefine the previous sequences as $ \underline{r}^{n,+}\coloneqq \underline{r}^m$, if $ m\leq n $ is the maximal such that $ \underline{r}^m>\underline{s}$ in the inverse lexicographic order; and $ {\underline{r}^{n,-}}\coloneqq \underline{r}^m$, if $ m\leq n $ is the maximal such that $ \underline{r}^m<\underline{s} $ in the inverse lexicographic order. Hence, the sequence of rays $ \left\lbrace \gamma_{\underline{r}^{n,+}}^\infty \right\rbrace_n $ approximates $ \gamma^\infty_{\underline{s} }$ from above; and $ \left\lbrace \gamma_{\underline{r}^{n,-}}^\infty \right\rbrace_n $ from below.
	
	Now, assume that $ \gamma^\infty_{\underline{s}}\not\subset L_{\underline{s}} $, so we can find $ t_0 $ such that $ \varepsilon\coloneqq \textrm{dist }(\gamma_{\underline{s}}^\infty(t_0), L_{\underline{s}})>0 $. Since $ \infty\in L_{\underline{s}}$ and points in $ L_{\underline{s}} $ must have itinerary $ {\underline{s}} $, it follows that $ L_{\underline{s}} $ is contained in the connected component $ U_n $ of \[\mathbb{C}\smallsetminus \left( D(\gamma_{\underline{s}}^\infty(t_0),\varepsilon)\cup \gamma^\infty_{\underline{r}^{n,+}}\cup \gamma^\infty_{\underline{r}^{n,-}}\right), \] which contains $ \gamma^\infty_{\underline{s}}(t) $, for all $ t\leq t_1 $, for some $ t_1<t_0 $. Therefore, $ L_{\underline{s}}\subset\bigcap\limits_n U_n\subset \gamma_{\underline{s}}^\infty(\left( -\infty, t_0\right] ) $. In such a case, $ \overline{\gamma_{{\underline{s}}}^\infty} $ would separate the plane into (at least) two different connected components, what we have proved before that it is not possible. Therefore, $ \gamma^\infty_{\underline{s}}\subset L_{\underline{s}} $, as desired.

	Then, it follows from Curry's Theorem \ref{teo-curry} that $ L_{\underline{s}} $ is an indecomposable continuum, as desired. 
\end{proof}

Finally, we prove \ref{teo:Indec}.
\begin{proof}[Proof of \ref{teo:Indec}]
	The existence of uncountably many rays that land follows from Proposition \ref{prop-bounded-rays-land} (observe that there are uncountably many bounded sequences), whereas the existence of uncountably many non-landing rays follows from Proposition \ref{prop-non-landing}. On Corollary \ref{corol-indec}, we prove that the accumulation set of such non-landing rays is an indecomposable continuum.
\end{proof}
\section{Accessibility from $ U $ of points in $ \partial U $: Proof of \ref{teo:B}}\label{sect-6-accessibility}

This section is devoted to the proof of \ref{teo:B}, which relates   the accessibility from $ U $ with the previously  studied sets: the escaping set, the non-escaping sets and the landing sets. In particular, \ref{teo:B} asserts that all boundary points in the escaping set are non-accessible, while points in $ \partial U $ having a bounded orbit are accessible. 

First of all, let us choose as a Riemann map the function $ \varphi\colon\mathbb{D}\to U $ such that $ \varphi (0)=0 $ and $ \varphi(\mathbb{R}\cap\mathbb{D})=\mathbb{R} $, as in \cite{baker-dominguez}. 
With this choice, the associated inner function is 
\[g(z)=\dfrac{3z^2+1}{3+z^2}.\] It is easy to check that the
Denjoy-Wolff point of $ g $ is 1. Moreover, since $ g $ is a Blaschke product of degree 2 (and hence there are no critical points in the unit circle),
$ g _{|\partial\mathbb{D}}$ is a 2-to-1 covering of $ \partial \mathbb{D} $, being 1 the only fixed point. In particular, the preimages of 1 under $ g $ are itself and $ -1 $, since $ \varphi(\mathbb{R}\cap\mathbb{D})=\mathbb{R} $ and $ f(-\infty)=+\infty$.

Let us consider the following subsets of the (closed) unit disk\[ D_0\coloneqq \overline{\mathbb{D}}\cap \left\lbrace \textrm{Im }z> 0\right\rbrace \hspace{1cm}D_1\coloneqq \overline{\mathbb{D}}\cap \left\lbrace \textrm{Im }z< 0\right\rbrace,\] as shown in Figure \ref{fig-radial-limits}. We define the itinerary for a point in $ \partial\mathbb{D} $ in the following way. 

\begin{defi}{\bf (Itineraries in $ \partial\mathbb{D} $)}\label{def-itinerary-D}
	Let $ e^{i\theta}\in\partial\mathbb{D} $. If $ g^n(e^{i\theta})\neq 1 $, for all $ n\geq0 $, then the \textit{itinerary} of $ e^{i\theta}$ is defined as the sequence $ \mathscr{S}(e^{i\theta})={\underline{s}}=\left\lbrace s_n\right\rbrace _n\in \Sigma_2 $ satisfying $ g^n(e^{i\theta})\in D_{s_n} $. 
	
	If there exists $ n_0\geq 0 $ such that $ g^{n_0}(e^{i\theta})=1 $, then the \textit{itineraries} of $ e^{i\theta}$, $ \mathscr{S}(e^{i\theta}) $, are defined as the two sequences $ {\underline{s}}^j = \left\lbrace s^j_n\right\rbrace _n\in \Sigma_2$, $ j=0,1 $, satisfying $ g^n( e^{i\theta})\in D_{s_n} $ for $ n\leq n_0-2$, $ s^0_{n_0-1}=1 $, $ s^1_{n_0-1}=0 $ and $ s^j_n=j $, for $ n\geq n_0 $.
\end{defi}

Hence, we have just defined a multivalued function \[\mathscr{S}\colon\partial \mathbb{D}\longrightarrow\Sigma_2.\] We note that, since every point in $ \partial\mathbb{D} $ has an itinerary, the domain of $ \mathscr{S} $ is $ \partial\mathbb{D} $. Moreover,  we claim that $ \mathscr{S} $ is injective, i.e. that two different points in the unit circle cannot have the same itinerary.
This is
due to the expansiveness of the map $ g_{|\partial\mathbb{D}} $. Indeed,
\[g'(z)= \dfrac{-16z}{(3z^2+1)^2},\] and hence, for $ e^{i\theta}\in\partial\mathbb{D} $, it holds \[\left| g'(e^{i\theta})\right| = \dfrac{16\left| e^{i\theta}\right| }{\left| 3e^{i 2\theta}+1\right| ^2}\geq \dfrac{16 }{ (3\left|e^{i 2\theta}\right| +1)^2}\geq 1.\]

We also shall consider its inverse \[\mathscr{S}^{-1}\colon\Sigma_2\longrightarrow\partial \mathbb{D},\] which is a single-valued function. Moreover, $ \mathscr{S}^{-1} $ is  surjective, but not injective, and commutes with the shift map $ \sigma $ in $ \Sigma_2 $.

Since $ \mathscr{S} $ is only multivalued when considering eventual preimages of 1, it follows that $ \mathscr{S} $ is a bijection if we restrict ourselves to non-eventually constant sequences in $ \Sigma_2 $ and points in $ \partial\mathbb{D} $ which are not eventual preimages of $ 1 $.

The following proposition is the key result which  relates itineraries in $\partial \mathbb{D} $ and in $ \widehat{S} $, and will clarify the choice of the itineraries in $ \partial\mathbb{D} $.
\begin{prop}{\bf (Correspondence between itineraries)}\label{prop_correspondence_it}
 Let $ e^{i\theta}\in\partial \mathbb{D} $. If $ g^n(e^{i\theta})\neq 1 $, for all $ n\geq0 $, and $ {\underline{s}}=\mathscr{S}  (e^{i\theta}) $ then $ Cl(\varphi, e^{i\theta})=\overline{\gamma_{\underline{s}}^\infty}  $. If there exists $ n_0\geq 0 $ such that $ g^{n_0}(e^{i\theta})=1 $ and $ \left\lbrace {\underline{s}}^0, {\underline{s}}^1\right\rbrace  =\mathscr{S}  (e^{i\theta}) $, then $ Cl(\varphi, e^{i\theta})= \overline{\gamma_{{\underline{s}}^0}^\infty} \cup \overline{\gamma_{{\underline{s}}^1}^\infty}   $.
\end{prop}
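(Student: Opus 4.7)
The central tool I would use is the semiconjugacy $\varphi\circ g=f\circ\varphi$, extended to the boundary via the continuity of $g$ on $\overline{\mathbb{D}}$ (it is a degree-two Blaschke product, with no critical points on $\partial\mathbb{D}$). Two preliminary observations are needed. First, the normalization $\varphi(0)=0$ and $\varphi(\mathbb{R}\cap\mathbb{D})=\mathbb{R}$ forces the real-symmetric identity $\varphi(\bar z)=\overline{\varphi(z)}$, which pairs the half-disks $D_0,D_1$ with the half-strips $\Omega_0,\Omega_1$ via $\varphi(D_j\cap\mathbb{D})=U\cap\Omega_j$. Second, the contradiction argument inside the proof of Proposition \ref{prop-caracteritzacio-frontera}(b) in fact establishes the stronger statement that every $w\in\partial U$ with itinerary $I(w)=\underline{t}$ must lie in $\overline{\gamma_{\underline{t}}^\infty}$, by uniform expansion of $f$ in $S\smallsetminus\overline{V}$.

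For the forward inclusion $Cl(\varphi,e^{i\theta})\subseteq\overline{\gamma_{\underline{s}}^\infty}$ (or the stated union in the degenerate case), take $w\in Cl(\varphi,e^{i\theta})$ and $z_n\to e^{i\theta}$ in $\mathbb{D}$ with $\varphi(z_n)\to w$. If $w=\infty$, unboundedness of each escaping tail already places $w$ in $\overline{\gamma_{\underline{s}}^\infty}$ inside $\widehat{\mathbb{C}}$. Otherwise $w\in\partial U$: continuity of $g^k$ on $\overline{\mathbb{D}}$ and the conjugacy give $f^k(w)=\lim_n\varphi(g^k(z_n))$; when $g^k(e^{i\theta})$ lies in the open upper or lower half of $\overline{\mathbb{D}}$, a tail of $\{g^k(z_n)\}$ is in $D_{s_k}\cap\mathbb{D}$, so a tail of $\{\varphi(g^k(z_n))\}$ is in $U\cap\Omega_{s_k}$, forcing $f^k(w)\in\overline{\Omega_{s_k}}$. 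Hence $I(w)=\underline{s}$, and the second preliminary puts $w\in\overline{\gamma_{\underline{s}}^\infty}$. In the degenerate case, the ambiguity occurs only at step $n_0-1$, where $g^{n_0-1}(e^{i\theta})=-1\in\mathbb{R}$; extracting a subsequence along which $\textrm{Im}\,g^{n_0-1}(z_n)$ has a fixed sign selects between $\underline{s}^0$ and $\underline{s}^1$.

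For the reverse inclusion, I plan to use the nested cells $A_k=\{z\in\mathbb{D}:g^i(z)\in D_{s_i},\,0\leq i\leq k\}$ (with an analogous pair in the degenerate case). Using $|g'|\geq 1$ on $\partial\mathbb{D}$ with strict inequality away from $\pm 1$, together with the non-eventual-constancy of $\underline{s}$ (the eventually-constant case reduces directly to Lemma \ref{lemma-eventually-constant}), I would show that the arc $\overline{A_k}\cap\partial\mathbb{D}$ containing $e^{i\theta}$ shrinks to $\{e^{i\theta}\}$ as $k\to\infty$. Then, for $w\in\overline{\gamma_{\underline{s}}^\infty}$, the plan is to find $w_k\in U$ with $w_k\to w$ and $f^i(w_k)\in\Omega_{s_i}$ for $0\leq i\leq k$; then $\varphi^{-1}(w_k)\in A_k$, and every subsequential limit in $\overline{\mathbb{D}}$ must lie in $\bigcap_k\overline{A_k}\cap\partial\mathbb{D}=\{e^{i\theta}\}$, producing $z_n\to e^{i\theta}$ with $\varphi(z_n)\to w$. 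For $w$ on the ray itself, the nested squares of Proposition \ref{teo-curves-of-escaping-points} supply such $w_k$ directly; for $w$ in the landing set I would approximate $w$ by points of $\gamma_{\underline{s}}^\infty$ and diagonalize.

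The hardest part will be this reverse inclusion. Two issues demand care: first, proving that $\overline{A_k}\cap\partial\mathbb{D}$ really shrinks to a single point (the bound $|g'|\geq 1$ alone is insufficient, since expansion degenerates at the fixed point $1$, so one must exploit that the $g$-orbit of $e^{i\theta}$ keeps leaving any neighborhood of $1$, which follows from the non-eventual-constancy of $\underline{s}$); and second, producing the $w_k$ uniformly for every $w\in\overline{\gamma_{\underline{s}}^\infty}$, in particular when the landing set is an indecomposable continuum (by \ref{teo:Indec}), where the diagonal approximation through ray points must be carried out carefully to preserve the itinerary constraint over arbitrarily long times.
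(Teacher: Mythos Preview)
Your forward inclusion is essentially the same as the paper's, and it is correct (with the implicit observation that a finite point of $Cl(\varphi,e^{i\theta})$ lies in $\partial U$, hence never on $\mathbb{R}\subset U$, so $f^k(w)\in\overline{\Omega_{s_k}}$ actually forces $f^k(w)\in\Omega_{s_k}$).

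For the reverse inclusion, however, you take a much harder route than necessary. The paper dispatches it in two lines via a covering/partition argument: every point of $\partial U$ lies in \emph{some} cluster set $Cl(\varphi,e^{i\theta'})$ (a general fact about Riemann maps, obtained by pulling back an approximating sequence and extracting a limit in $\overline{\mathbb{D}}$); by the forward inclusion already proved, that cluster set is contained in the itinerary class of $e^{i\theta'}$; since itinerary classes in $\partial U$ are pairwise disjoint and the itinerary map $\mathscr{S}$ on $\partial\mathbb{D}$ is injective (by the expansion of $g_{|\partial\mathbb{D}}$), this forces $e^{i\theta'}=e^{i\theta}$. Equality follows, and one then invokes $\{z\in\partial U:I(z)=\underline{s}\}=\overline{\gamma_{\underline{s}}^\infty}$ from Proposition~\ref{prop-caracteritzacio-frontera}(b). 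This completely sidesteps both difficulties you flag: you never need to prove that the nested arcs $\overline{A_k}\cap\partial\mathbb{D}$ shrink to a point, and you never need to manufacture the approximants $w_k\in U$ with prescribed long itineraries (which, as you note, is delicate when the landing set is an indecomposable continuum). Your plan is not wrong, but it duplicates work already encoded in the injectivity of $\mathscr{S}$ and the global covering property of cluster sets.
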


\begin{proof} 
% Consider the following diagram which relates the itinerary $ \mathscr{S}  $ for points  $ \partial\mathbb{D} $ and the itinerary $ I $ for points in $ \widehat{S} $, defined in \ref{def-itinerary}. Recall that $ \mathscr{S}  $ is not a function, but a multivalued function, so the diagram has to be understood as a (multivalued) correspondence. 
%\[
%\begin{tikzcd}[row sep=large,column sep=huge]
%\partial\mathbb{D} \arrow{r}{g} \arrow[swap]{d}{\mathscr{S} } & \partial\mathbb{D} \arrow{d}{\mathscr{S} } \\
%\Sigma_2 \arrow{r}{\sigma} & \Sigma_2 \\
%\partial U \arrow{u}{I}\arrow{r}{f} & \partial U \arrow[swap]{u}{I}
%\end{tikzcd}
%\]

Observe that, according to the chosen Riemann map $ \varphi\colon \mathbb{D}\to U $, it holds that
 $ \varphi(\textrm{Int } D_0)\subset {\Omega_0} $ and  $ \varphi(\textrm{Int } D_1)\subset {\Omega_1} $ (see Fig. \ref{fig-radial-limits}). Moreover, $ \varphi((-1,1))=\mathbb{R}\subset U $. 
 
 Hence, if $ e^{i\theta} \in\partial\mathbb{D}$ and $ e^{i\theta}\notin \left\lbrace -1,1\right\rbrace  $, then $ e^{i\theta}\in D_i$, and so does a neighbourhood of $ e^{i\theta} $ in $\overline{\mathbb{D}}$. Hence, $ Cl(\varphi, e^{i\theta})\subset \Omega_i $, for some $ i\in \left\lbrace 0,1\right\rbrace  $. By continuity of $ g $, every sequence in $ \mathbb{D} $ converging to $ e^{i\theta} $ maps under $ g $ to a sequence converging to $ g(e^{i\theta}) $. If $ e^{i\theta} \in\partial\mathbb{D}$ is not a preimage of 1, then $ g(e^{i\theta})\notin \left\lbrace -1,1\right\rbrace  $, so $ f(Cl(\varphi, e^{i\theta})\cap\mathbb{C})\subset \Omega_j $, for some $ j\in \left\lbrace 0,1\right\rbrace  $.  Repeating inductively the same argument, we get that the itinerary of $ e^{i\theta} $ determines completely the itinerary of points in $ Cl(\varphi, e^{i\theta}) $, so \[ Cl(\varphi, e^{i\theta})\subset \left\lbrace z\in\partial U\colon I(z)=\mathscr{S} (e^{i\theta})\right\rbrace \cup \left\lbrace \infty\right\rbrace  ,\] if $ e^{i\theta} \in\partial\mathbb{D}$ is not an eventual preimage of 1.
 
 On the other hand, consider $ 1\in\partial\mathbb{D} $, $ \mathscr{S} (1)=\left\lbrace {\overline{0}}, {\overline{1}}\right\rbrace $. We note that, for any sequence of points $ \left\lbrace w_k\right\rbrace _k\subset D_0 $ converging to 1, and for all $ n\geq 0 $, there exists $ k_0=k_0(n) $ such that $  \left\lbrace g^n( w_k)\right\rbrace _{k\geq k_0}\subset D_0  $; and we observe that 1 is the only point in $ \partial \mathbb{D} $ with this property. Similarly, if $z\in \partial U $ and for any sequence $ \left\lbrace z_k\right\rbrace _k\subset \Omega_0 $ converging to $ z $,  for all $ n\geq 0 $ there exists $ k_0 $ such that $  \left\lbrace f^n( z_k)\right\rbrace _{k\geq k_0}\subset \Omega_0  $, then $ z\in L^+ $. Therefore,  for any sequence $ \left\lbrace w_n\right\rbrace _n\subset D_0 $, $ w_n\to 1 $,  any accumulation point of $   \left\lbrace \varphi (w_n)\right\rbrace _n $ must be in $ L^+\cup \left\lbrace \infty\right\rbrace  $.
 The analogous argument works similarly with $ D_1 $ and $ L^- $.  
 Hence, \[Cl(\varphi, e^{i\theta})\subset L^+\cup L^-\cup \left\lbrace \infty\right\rbrace= \left\lbrace z\in\partial U\colon I(z)\in \left\lbrace {\overline{0}}, {\overline{1}}\right\rbrace\right\rbrace \cup \left\lbrace \infty\right\rbrace . \] 
 
Therefore, if $ e^{i\theta} $ is an eventual preimage of 1, and hence  $\mathscr{S}  (e^{i\theta})= \left\lbrace {\underline{s}}^0, {\underline{s}}^1\right\rbrace  $, it holds
 \[ Cl(\varphi, e^{i\theta})\subset \left\lbrace z\in\partial U\colon I(z)\in \left\lbrace {\underline{s}}^0, {\underline{s}}^1\right\rbrace\right\rbrace \cup \left\lbrace \infty\right\rbrace  .\]

We note that, given two different sequences $ \underline{ r}, \underline{ s} \in\Sigma_2$, the sets of points in $ \partial U $ having these itineraries are disjoint, i.e.\[\left\lbrace z\in\partial U\colon I(z)\in  {\underline{r}} \right\rbrace\cap\left\lbrace z\in\partial U\colon I(z)\in   {\underline{s}}\right\rbrace=\emptyset,\]since a point in $ \partial U $ has a unique itinerary. Moreover, any point $ z\in \partial U $ must belong to at least one cluster set, hence the previous three inclusions are in fact equalities.
 The fact that all points in $ \partial U $ are in the closure of a hair ends the proof of the proposition.
\end{proof}

	\begin{figure}[htb!]\centering
	\includegraphics[width=15cm]{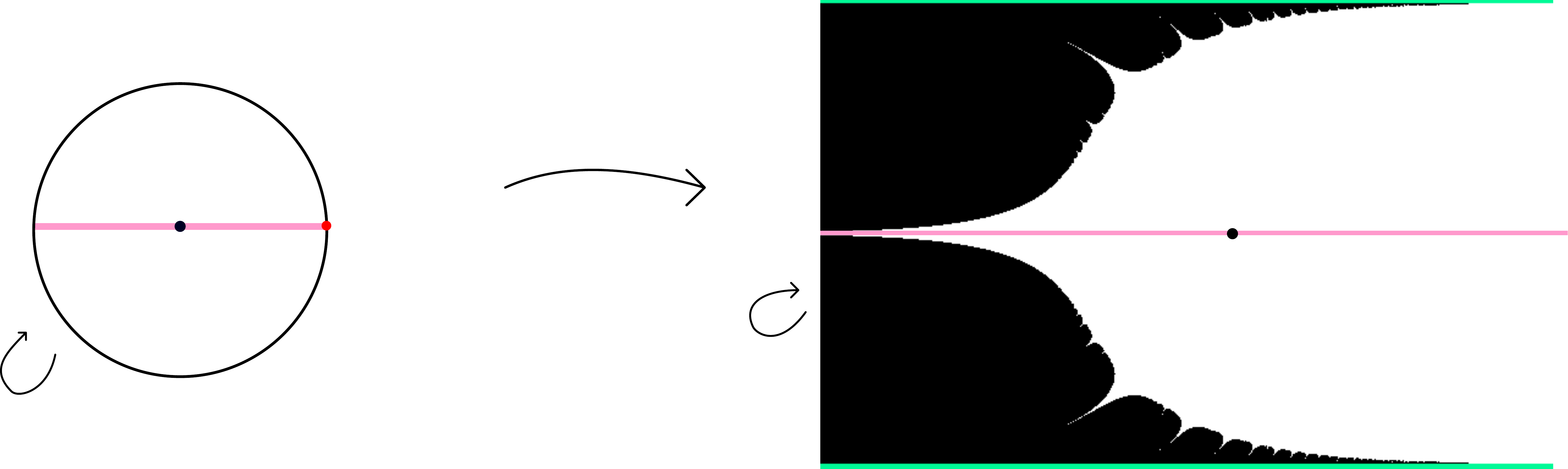}
	\setlength{\unitlength}{15cm}
	\put(-0.63, 0.205){$ \varphi $}
	\put(-1.016, 0.045){ $g $}
	\put(-0.54, 0.1){$f$}
	\put(-0.52, 0.28){ $L^+$}
	\put(-0.52, 0.0){ $L^-$}
	\put(-0.05, 0.23){ $\Omega_0$}
	\put(-0.05, 0.07){ $\Omega_1$}
	\put(-0.85, 0.18){ $D_0$}
	\put(-0.85, 0.11){ $D_1$}
	\caption{\footnotesize Representation of the Riemann map $ \varphi\colon\mathbb{D}\to U $, which fixes the real axis. The regions $ D_0, D_1, \Omega_0 $ and $ \Omega_1 $ are also represented, and it is clear that $ \varphi(D_0)\subset {\Omega_0} $ and  $ \varphi(D_1)\subset {\Omega_1} $ implying the correspondence between itineraries.}\label{fig-radial-limits}
\end{figure}

Let us observe that the previous proposition gives, in particular, a way to compute the impression of the prime end at 1, alternative to the one in \cite[Thm. 6.1]{baker-dominguez}.
\begin{corol}{\bf (Prime end at 1)}\label{lemma-prime-end}
	The prime end of $ U $ which corresponds by the Riemann map $ \varphi $ to 1 has the impression $ L^+\cup L^-\cup \left\lbrace \infty\right\rbrace  $. Equivalently, $ Cl(\varphi, 1)=L^+\cup L^-\cup \left\lbrace \infty\right\rbrace  $.
\end{corol}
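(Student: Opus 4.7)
The plan is to deduce the corollary directly from Proposition \ref{prop_correspondence_it} applied to the point $e^{i\theta}=1$. First I would observe that, since $1$ is the Denjoy-Wolff point of $g$, it is fixed: $g(1)=1$. Hence $1$ lies in the collection of eventual preimages of $1$ under $g$, corresponding to $n_{0}=0$ in the second clause of Definition \ref{def-itinerary-D}. With $n_{0}=0$, the conditions for $n\leq n_{0}-2$ and for the index $n_{0}-1$ are vacuous, and the rule $s^{j}_{n}=j$ for $n\geq 0$ gives exactly $\mathscr{S}(1)=\{\overline{0},\overline{1}\}$.

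Next I would apply the second assertion of Proposition \ref{prop_correspondence_it} with this data to get
\[
Cl(\varphi,1)\;=\;\overline{\gamma^{\infty}_{\overline{0}}}\;\cup\;\overline{\gamma^{\infty}_{\overline{1}}}.
\]
To identify the right-hand side explicitly, I would recall the observation made right after Theorem \ref{teo-dynamic-ray}: by the uniqueness of dynamic rays established there, the invariant horizontal boundary lines satisfy $L^{+}=\gamma^{\infty}_{\overline{0}}$ and $L^{-}=\gamma^{\infty}_{\overline{1}}$, since $L^{\pm}$ are contained in $\partial U$ (being in the Julia set and never leaving $S$) and consist entirely of points with constant itinerary.

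Finally, taking closures in $\widehat{\mathbb{C}}$, each of the horizontal lines $L^{\pm}$ is unbounded in both directions, so $\overline{L^{\pm}}=L^{\pm}\cup\{\infty\}$. Combining,
\[
Cl(\varphi,1)\;=\;L^{+}\cup L^{-}\cup\{\infty\},
\]
and the equivalence with the impression of the prime end at $1$ is the standard identification recalled in Section \ref{sect-2-prelim}. There is no genuine obstacle in the argument; the only bookkeeping issue is checking that the definition of itinerary is compatible with $n_{0}=0$ when the boundary point in question is already fixed by $g$, which is immediate from Definition \ref{def-itinerary-D}.
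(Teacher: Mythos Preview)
Your proof is correct and is exactly the approach the paper has in mind: the corollary is stated immediately after Proposition \ref{prop_correspondence_it} with the remark that ``the previous proposition gives, in particular, a way to compute the impression of the prime end at 1'', and no further argument is supplied. You have simply spelled out the application of that proposition at $e^{i\theta}=1$, using $\mathscr{S}(1)=\{\overline{0},\overline{1}\}$ (which the paper itself records inside the proof of Proposition \ref{prop_correspondence_it}) together with the identification $L^{\pm}=\gamma^{\infty}_{\overline{0}},\gamma^{\infty}_{\overline{1}}$ noted after Theorem \ref{teo-dynamic-ray}.
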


The previous correspondence between itineraries and the fact that in each cluster set $ Cl(\varphi, e^{i\theta}) $ there is at most one accessible point, imply that there is at most one accessible point per itinerary. In particular,  in each hair and its landing set there is at most one accessible point. 

A first study on accessibility and radial limits was carried out by Baker and Domínguez, characterizing the accesses to infinity.

\begin{thm}{\bf (Accesses to infinity, {\normalfont\cite{baker-dominguez}})}\label{lemma-accesess} Accesses from $ U $ to infinity are characterized by the eventual preimages of 1, i.e.
	\[{\left\lbrace e^{i\theta}\colon \varphi^*(e^{i\theta})=\infty\right\rbrace} ={\left\lbrace e^{i\theta}\colon g^n(e^{i\theta})=1\textrm{, for some }n\geq0\right\rbrace}.\]
\end{thm}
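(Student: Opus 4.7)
The statement splits into two inclusions, both driven by the functional equation $\varphi\circ g = f\circ\varphi$ and the Correspondence Theorem (Theorem \ref{correspondence-theorem}). For the forward inclusion $\supseteq$, I would induct on the smallest $n$ with $g^n(e^{i\theta})=1$. In the base case $n=0$, i.e.\ $e^{i\theta}=1$, the normalization $\varphi(0)=0$, $\varphi(\mathbb{R}\cap\mathbb{D})=\mathbb{R}$ combined with the functional equation yields $\varphi(g^k(0))=f^k(0)$. Since $g^k(0)$ is real and monotone increasing to $1$ while $f^k(0)\to+\infty$ (because $f|_\mathbb{R}$ has no fixed point and $f(x)>x$), continuity of $\varphi$ on the real diameter forces $\varphi(r)\to+\infty$ as $r\to 1^-$, giving $\varphi^*(1)=\infty$; the symmetric argument with the backward $f$-orbit of $0$ tending to $-\infty$ along $\mathbb{R}$ gives $\varphi^*(-1)=\infty$.

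For the inductive step, set $\eta:=g(e^{i\theta})$, so $\varphi^*(\eta)=\infty$ by the inductive hypothesis. Since $g'(z)=-16z/(3z^2+1)^2$ does not vanish on $\partial\mathbb{D}$, $g$ is a local biholomorphism at $e^{i\theta}$, and the inverse branch sending $\eta$ back to $e^{i\theta}$ lifts the radial segment at $\eta$ to a curve $\gamma\subset\mathbb{D}$ landing at $e^{i\theta}$. The functional equation gives $f(\varphi(\gamma(r)))=\varphi(r\eta)\to\infty$, and continuity of $f$ on $\mathbb{C}$ rules out bounded subsequences of $\varphi(\gamma(r))$, so $\varphi(\gamma)$ lands at $\infty$. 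Theorem \ref{correspondence-theorem}(a) then identifies this access with the landing point $e^{i\theta}$ of $\gamma$, so $\varphi^*(e^{i\theta})=\infty$.

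For the reverse inclusion $\subseteq$, running the same functional-equation argument forward shows that $A:=\{e^{i\theta}:\varphi^*(e^{i\theta})=\infty\}$ is forward invariant under $g$: if $\varphi(re^{i\theta})\to\infty$, then $\varphi(g(re^{i\theta}))=f(\varphi(re^{i\theta}))\to\infty$, and the curve $r\mapsto g(re^{i\theta})$ lands at $g(e^{i\theta})$, so $\varphi^*(g(e^{i\theta}))=\infty$ by Theorem \ref{correspondence-theorem}(a). Suppose, for contradiction, that some $e^{i\theta}\in A$ is not an eventual preimage of $1$. By Definition \ref{def-itinerary-D} the itinerary $\mathscr{S}(e^{i\theta})=\underline{s}$ is single-valued; moreover the conjugacy of $g|_{\partial\mathbb{D}}$ with the doubling map (under which $1$ corresponds to the fixed point $0$ and its dyadic preimages correspond exactly to eventually-constant itineraries) rules out $\underline{s}$ being eventually constant. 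Proposition \ref{prop_correspondence_it} then gives $Cl(\varphi,e^{i\theta})=\overline{\gamma^\infty_{\underline{s}}}$, and Lemma \ref{lemma-eventually-constant} guarantees that $L_{\underline{s}}$ contains points of $\mathbb{C}$.

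The main obstacle is to convert this impression equality into a contradiction with $\varphi^*(e^{i\theta})=\infty$. My plan is to analyze the principal point of the prime end at $e^{i\theta}$: using the decomposition of $\partial U$ as the accumulation of the preimage curves $\Phi_{\underline{t}}(L^\pm)$ from Proposition \ref{prop-caracteritzacio-frontera}, I would realize cross-cuts shrinking to this prime end as $\varphi$-images of arcs in $\mathbb{D}$ separating $e^{i\theta}$ from $1$, whose endpoints on $\partial U$ lie on consecutive preimage curves converging into $L_{\underline{s}}\cap\mathbb{C}$. The principal point of the prime end therefore lies in $\mathbb{C}$, and since the radial limit coincides with the principal point whenever it exists, we conclude $\varphi^*(e^{i\theta})\in\mathbb{C}$, contradicting $\varphi^*(e^{i\theta})=\infty$. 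The delicate point is the oscillating case from Theorem \ref{teo:Indec}, where $L_{\underline{s}}$ may be an indecomposable continuum possibly containing $\infty$; there one must argue that the nested cross-cuts still trap a finite subcontinuum, so that the principal point does not escape to $\infty$.
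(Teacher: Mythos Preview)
This theorem is not proved in the paper: it is quoted from \cite{baker-dominguez} and used as a black box (only the inclusion $\supseteq$ is actually needed later, in the proof of Theorem~C(b)). So there is no proof in the paper to compare against; I can only assess your argument on its own merits.

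Your inclusion $\supseteq$ is fine. The base case is correct once you note that $\varphi$ maps $(0,1)$ homeomorphically and monotonically onto $\mathbb{R}_+$, so $\varphi(r)\to+\infty$ as $r\to 1^-$; the inductive step via local inverse branches of $g$ and the Correspondence Theorem is standard and correct.

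Your inclusion $\subseteq$ is not a proof, and you say so yourself. Invoking Proposition~\ref{prop_correspondence_it} and Lemma~\ref{lemma-eventually-constant} is legitimate (neither depends on the result you are proving), and they do give $Cl(\varphi,e^{i\theta})=\overline{\gamma^\infty_{\underline s}}$ with $L_{\underline s}\cap\mathbb{C}\neq\emptyset$. But this says nothing about the \emph{radial} limit: $\infty$ is always in the impression (the tail of every ray goes to $-\infty$), so knowing the impression meets $\mathbb{C}$ does not prevent $\varphi^*(e^{i\theta})=\infty$. Your proposed fix, showing that every principal point of the prime end lies in $\mathbb{C}$, is the right target (since a radial limit is always a principal point), but you do not carry it out. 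In particular, in the oscillating case of Theorem~\ref{teo:Indec} the landing set $L_{\underline s}$ can itself contain $\infty$, and your cross-cut sketch (``arcs whose endpoints lie on consecutive preimage curves converging into $L_{\underline s}\cap\mathbb{C}$'') does not explain why such a null-chain exists or why its cross-cuts stay bounded. That is the whole content of the inclusion, and it is missing.
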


Next, we prove \ref{teo:B}, which asserts that escaping points are non-accessible from $ U $, while points in $ \partial U $ having a bounded orbit are all accessible from $ U $. Using the Correspondence Theorem \ref{correspondence-theorem} between accesses and radial limits, we rewrite the statement of \ref{teo:B} as follows.

\begin{named}{Theorem C}\begin{enumerate}[label={\em (\alph*)}]
		\item Let $ e^{i\theta}\in\partial \mathbb{D} $ such that the radial limit $ z\coloneqq\varphi^*(e^{i\theta}) $ exists. Then, $ z $ is non-escaping.
		\item Let $ z\in \partial U $ be a point whose orbit is bounded. Then, there exists $ e^{i\theta}\in\partial \mathbb{D} $ such that $ \varphi^*(e^{i\theta})=z $, i.e. $ z $ is accessible from $ U $.
	\end{enumerate}
\end{named}
\begin{proof}\begin{enumerate}[label={(\alph*)}]
		\item 	The proof is based on the one developed by Baker and Domínguez in  \cite[Thm. 6.3]{baker-dominguez}.
		
		Assume $ z\coloneqq\varphi^*(e^{i\theta}) $ is an escaping point and let us define  the open set\[W\coloneqq \left\lbrace z\in S\colon \textrm{Re }z<-2\textrm{ and } \left| \textrm{Im }z\right| >\frac{\pi}{2}\right\rbrace. \]
		
		Iterating the function if needed, we can assume $f^n (z) \in W$, for all $ n\geq 0 $. Since the radial segment \[ \varphi_\theta\coloneqq \left\lbrace \varphi(re^{i\theta})\colon r\in (0,1)\right\rbrace \] lands at $ z $, one can choose $ r_0\in (0,1) $ such that $ \gamma\coloneqq\left\lbrace \varphi(re^{i\theta})\colon r\in (r_0,1)\right\rbrace \subset W  $. For points in $ W $ we have $ \textrm{Re }f(z)<\textrm{Re } z $. Hence, since $ \gamma $ is connected and $ f^n(z)\in W $ for $ n\geq 0 $, we have $ f^n(\gamma)\subset W $, for all $ n\geq 0 $. This is a contradiction because $ \gamma\subset U $, so points in $ \gamma $ must converge to $ +\infty $.
		
		\item First, we note that, by the the results in Section \ref{sect-5-non-escaping}, the only points in $ \widehat{S} $ with bounded orbit are endpoints $w_{\underline{s}} $ for bounded sequences $ {\underline{s}}\in\Sigma_2 $.
		  Therefore, the goal is to prove that, if $ e^{i\theta_{\underline{s}}}\in\partial \mathbb{D} $ has itinerary $ {\underline{s}}\in\Sigma_2 $, and $ {\underline{s}} $ is a bounded sequence, then $ \varphi^*(e^{i\theta_{\underline{s}}})=w_{\underline{s}} $. We note that the radial cluster set $ Cl_\rho(\varphi, e^{i\theta_{\underline{s}}}) $, which is connected, is contained in the cluster set $ Cl(\varphi, e^{i\theta_{\underline{s}}}) $ (see Sect. \ref{sect-2-prelim}), and for a bounded sequence, it holds \[Cl(\varphi, e^{i\theta_{\underline{s}}})= \overline{\gamma_{{\underline{s}}}^\infty}=\gamma_{{\underline{s}}}^\infty\cup \left\lbrace w_{\underline{s}}\right\rbrace\cup \left\lbrace \infty\right\rbrace, \] by Propositions \ref{prop-bounded-rays-land} and \ref{prop_correspondence_it}.
		  
	 Hence, it is enough to show that, if $ {\underline{s}} $ is a bounded sequence, then the radial cluster set $ Cl_\rho(\varphi, e^{i\theta_{\underline{s}}}) $ cannot contain any escaping point. 
	
Recall that $ g_{|\partial \mathbb{D}} $ is conjugate to the doubling map. Moreover, since $ {\underline{s}} $ contains at most $ N $ consecutive $ 0 $'s and $ 1 $'s, there exist $ 0<\theta_1 < \theta_2<\pi$ such that $ \theta_1 $ and $ \theta_2 $ are eventual preimages of 1 and  $ g^n(e^{i\theta_{\underline{s}}})\in \left( e^{i\theta_1}, e^{i\theta_2} \right) \cup  \left( e^{-i\theta_2}, e^{-i\theta_1} \right) $. Then, $ \varphi_{\theta_1}$ and $ \varphi_{\theta_2} $ are curves starting at 0 and landing at $ -\infty $ approaching $ L^+ $. Since $ \varphi $ is a bijection, $ f^n (\varphi_{\theta_{\underline{s}}})   $ is contained in the region bounded by $ \varphi_{\theta_1}$ and $ \varphi_{\theta_2} $ and its reflection along the real axis. Therefore, there exists $ R>0 $ such that, if we consider the open set $ W $ defined as before and  \[
W'\coloneqq \left\lbrace z\in S\colon \textrm{Re }z<-R\textrm{ and } \left| \textrm{Im }z\right| <\frac{\pi}{2}\right\rbrace,\]
then $ f^n(\varphi_{\theta_{\underline{s}}}) \cap W'=\emptyset $, for all $ n\geq 0 $. Compare with Figure \ref{fig-radial-limits-4}.
	
		\begin{figure}[htb!]\centering
		\includegraphics[width=15cm]{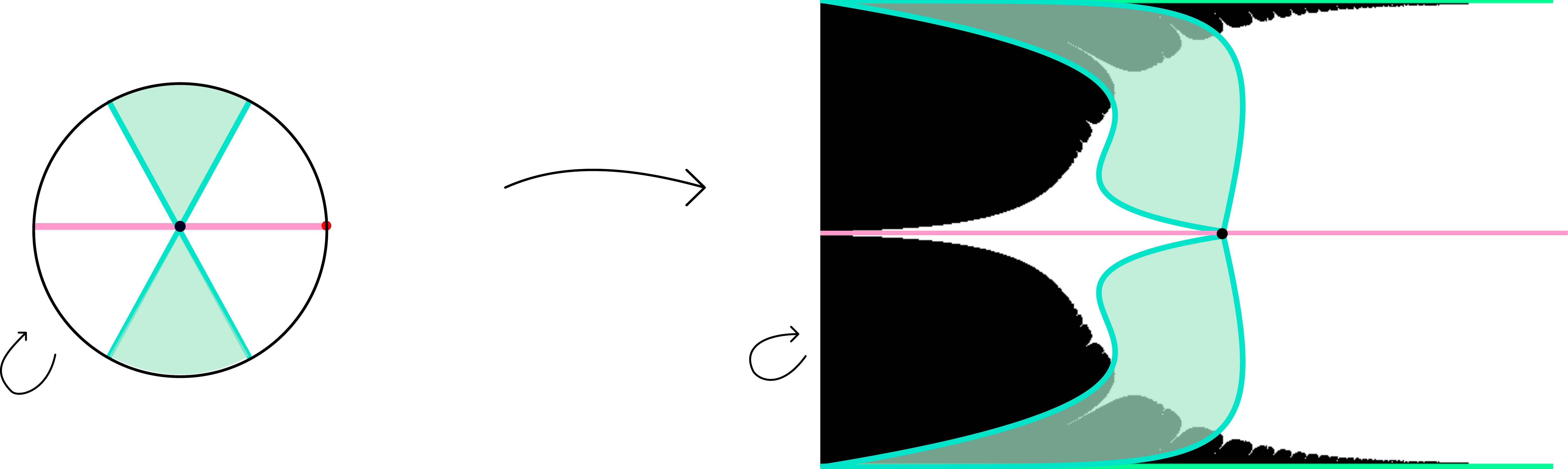}
		\setlength{\unitlength}{15cm}
		\put(-0.63, 0.205){$ \varphi $}
		\put(-1.016, 0.045){ $g $}
		\put(-0.54, 0.07){$f$}
		\put(-0.52, 0.28){ $L^+$}
		\put(-0.52, 0.0){ $L^-$}
		\put(-0.85, 0.24){ $e^{i\theta_0}$}
		\put(-0.95, 0.24){ $e^{i\theta_1}$}
		\put(-0.21, 0.22){ $\varphi_{\theta_0}$}
		\put(-0.33, 0.17){ $\varphi_{\theta_1}$}
		\caption{\footnotesize Schematic representation of the region bounded by $ \varphi_{\theta_1}$ and $ \varphi_{\theta_2} $ and its reflection along the real axis, where  $ f^n(\varphi_{\theta_{\underline{s}}})$ is contained, for all $ n\geq 0 $.}
		\label{fig-radial-limits-4}
		\end{figure}
			
	Assume the radial cluster set contains an escaping point $ z $. Iterating the function if needed, we can assume $ \textrm{Re }f^n(z)<-R $, for all $ n\geq 0 $, so $ z\in W $. 
	Then, there exists a sequence of real numbers $ \left\lbrace t_n\right\rbrace_n  $ such that $ t_n\to +\infty$ and $ z_n\coloneqq \gamma^\infty_{\underline s}(t_n)\to z $, as $ n\to\infty $. Without loss of generality, since $ z\in W $, we shall assume $ \left\lbrace z_n \right\rbrace _n\subset W$. For points in $ W $ we have $ \textrm{Re }f(z)<\textrm{Re } z $, so they either belong to $ W $ or to $ W' $. But $ W' $ has been defined so that  $ f^n(\varphi_{\theta_{\underline{s}}}) \cap W'=\emptyset $, so 
	$ \left\lbrace f^k (z_n)\right\rbrace \in W$ for all $ k\geq0 $: a contradiction, since $ \left\lbrace z_n\right\rbrace _n\subset U $, and points in $ U $ converge to $ +\infty $.
		\end{enumerate}
\end{proof}

\begin{remark} 
	Alternatively,  \ref{teo:B} can be seen as a consequence from the results of 
	\cite{benini-rempe}. Indeed, in 	\cite[Sect. 6]{benini-rempe}, it is proved that, for functions in class $ \mathcal{B} $ and bounded postsingular set, accessible points in the boundary of an invariant Fatou component coincide with the endpoints of the hairs lying in its boundary (Remark 6.11). Such result can be applied to $ h(w)=we^{-w} $, semiconjugate to $ f(z)=z+e^{-z} $ (Sect. \ref{sec-3-dynamics-f}), to deduce that points with bounded orbit are accessible from $ U $, since they are the endpoint of a hair in $\partial U $. 
	
	Nevertheless, although \ref{teo:B} can be seen as a consequence of this more general result, it relies strongly on the study of the landing sets of the dynamic rays, carried out in the previous section, which has to be done specifically for our function.
	Moreover, our construction shows explicitly the relation between the dynamics of the inner function in $ \partial\mathbb{D} $ and the dynamics of $ f $ in $ \partial U $, which was the main goal of the paper. 
\end{remark}

\section{Periodic points in $ \partial U $: Proof of \ref{teo:C}}\label{sect-periodic-points}
  This last section of the paper is dedicated to prove \ref{teo:C}, which asserts that periodic points are dense in $ \partial U $. Although it is known that periodic points are dense in the Julia set, if we restrict ourselves to the boundary of a Baker domain, it is not known, in general, the existence of a single periodic point. 
  
  The general argument used to prove that periodic points are dense in the Julia set (e.g. \cite[Thm. III.3.1]{carlesongamelin}) cannot be used, since it gives no control about the resulting periodic point. The proof we present allows us to find a periodic point in any neighborhood of any point in $ \partial U $, whose orbit is entirely contained in $ S $, and hence implying that the periodic point is in $ \partial U $.
  
  \begin{named}{Theorem D} Periodic points are dense in $ \partial U $.
  \end{named}
  \begin{proof}
  	In view of Theorem \ref{teo-dynamics-boundary-baker}, it is enough to approximate $ z\in\partial U $ having a dense orbit by periodic points in $ \partial U $. Let us fix $ \varepsilon>0 $ and consider the disk $ D(z, \varepsilon) $. Without loss of generality, we can assume $ D(z, \varepsilon)\subset S $ and $ D(z,\varepsilon)\cap\overline{V}=\emptyset $, where $ V $ is the absorbing domain defined in Section \ref{sec-3-dynamics-f}. We also assume $ \varepsilon<1 $.
  	
  	Recall that $ f $ is expanding in  $ S\smallsetminus \overline{V} $ and uniformly expanding in any left-half plane intersected with it (see Rmk. \ref{remark_expansion}). In particular, the map is uniformly expanding in $ S\cap \left\lbrace \textrm{Re }z<-2+\varepsilon\right\rbrace  $ with   constant of expansion $ \lambda>1 $. 
  	
  	Take $ n_0 >0$ such that $ \lambda^{n_0} >2 $. Since the orbit of $ z $ is assumed to be dense in $ \partial U $, it visits infinitely many times $ S\cap \left\lbrace \textrm{Re}z<-2\right\rbrace  $. Let $ n_1 $ be such that \[\#\left\lbrace n<n_1\colon \textrm{Re } f^{n}(z)<-2 \right\rbrace\geq n_0  .\]   Since the orbit of $ z $ is dense, there exists $ n_2 >n_1$ with $z_{n_2}\coloneqq f^{n_2}(z)\in D(z,\varepsilon) $. Then, $ \phi_{s_0}\circ\dots\circ\phi_{s_{n_2-1}} (z_{n_2})=z $, for a suitable choice of $ s_0,\dots, s_{n_2-1}\in \left\lbrace 0,1\right\rbrace  $.
  	
  	We claim that $ \phi_{s_0}\circ\dots\circ\phi_{s_{n_2-1}} (D(z,\varepsilon))\subset D(z,\varepsilon) $. Indeed, since $ D(z,\varepsilon)\cap\overline{V}=\emptyset $, we have $ D(z, \varepsilon)= D_\rho (z, \varepsilon) $, for the $ \rho $-distance defined in \ref{def-rho-distance}. The forward invariance of $ V $ gives $ \phi_{s_0}\circ\dots\circ\phi_{s_{n}}(D(z,\varepsilon))\subset S\smallsetminus \overline{V} $, for all $ n\geq 0 $. Moreover, since inverses are contracting, if $  \phi_{s_0}\circ\dots\circ\phi_{s_{n}}(z)\in S\cap \left\lbrace \textrm{Re }z<-2\right\rbrace  $, we have $\phi_{s_0}\circ\dots\circ\phi_{s_{n}}(D(z,\varepsilon)) \subset S\cap \left\lbrace \textrm{Re }z<-2+\varepsilon\right\rbrace  $. Hence, after applying $ n_2 $ inverses, since the iterated preimages of $ D(z,\varepsilon) $ are contained in $ \left\lbrace \textrm{Re }z<-2+\varepsilon\right\rbrace  $ at least $ n_0 $ times, $ \rho $-distances in $ D(z,\varepsilon) $ are contracted by a factor less than $ \frac{1}{\lambda^{n_0}} $.
  	Therefore we have:
  	\[\rho(\phi_{s_0}\circ\dots\circ\phi_{s_{n_2-1}}(z), z)=\rho(\phi_{s_0}\circ\dots\circ\phi_{s_{n_2-1}}(z), \phi_{s_0}\circ\dots\circ\phi_{s_{n_2-1}}(z_{n_2}))\leq \frac{1}{\lambda^{n_0}}\rho(z,z_{n_2})\leq \frac{1}{2}\varepsilon.\]
  	Now let $ w\in D(z,\varepsilon) $, then
  	\[ \rho(\phi_{s_0}\circ\dots\circ\phi_{s_{n_2-1}}(w),\phi_{s_0}\circ\dots\circ\phi_{s_{n_2-1}}(z))\leq \frac{1}{\lambda^{n_0}}\rho(w,z)\leq \frac{1}{2}\varepsilon. \]
  	Therefore, applying the triangle inequality, one deduces that $\phi_{s_0}\circ\dots\circ\phi_{s_{n_2-1}}(w) \in D(z,\varepsilon) $, for any $ w\in D(z,\varepsilon) $, as desired.
  	
  	Finally, observe that $ \rho(\phi_{s_0}\circ\dots\circ\phi_{s_{n_2-1}} $ is well-defined in $  \overline{D(z,\varepsilon)}$, and \[ \phi_{s_0}\circ\dots\circ\phi_{s_{n_2-1}} (\overline{D(z,\varepsilon)})\subset \overline{D(z,\varepsilon)} .\]
  	Hence,  Brouwer fixed-point theorem guarantees the existence of a fixed point $ z_0 $ for $ \phi_{s_0}\circ\dots\circ\phi_{s_{n_2-1}} $ in $ D(z,\varepsilon) $. This point is periodic for $ f $. Moreover, since its orbit is all contained in $ S $, we have $ z_0\in\partial U $, by Proposition \ref{prop-caracteritzacio-frontera}. This ends the proof of \ref{teo:C}.
  \end{proof}

\printbibliography

@article{bfjk19,
	author = {Barański, K. and Fagella , N. and Jarque, X. and Karpińska, B.},
year = {2019},
pages = {679-706},
title = {Escaping points in the boundaries of Baker domains},
volume = {137},
journal = {Journal d'Analyse Mathématique},
shorthand={BFJK19}
}

@article{bfjk15,
	author = {Barański, K. and Fagella , N. and Jarque, X. and Karpińska, B.},
	year = {2017},
	pages = {1835–1867},
	title = {Accesses to infinity from Fatou components},
	volume = {369},
	number={3},
	journal = {Transactions of the American Mathematical Society},
	shorthand={BFJK17}
}

@article{bfjk15-absorbing,
	author = {Barański, K. and Fagella , N. and Jarque, X. and Karpińska, B.},
	year = {2015},
	pages = {144-162},
	title = {Absorbing sets and Baker domains for holomorphic maps},
	volume = {92},
	number={1},
	journal = {Journal of the London Mathematical Society (2)},
	shorthand={BFJK15}
}

@article{r3s,
	author = {Rottenfu{\ss}er, G. and Rückert, J. and Rempe, L. and Schleicher, D.},
	year = {2010},
	pages = {77-125},
	title = {Dynamic rays of bounded-type entire functions},
	volume = {173},
	journal = {Annals of Mathematics},
	shorthand={RRRS10}
}

@article{baranski,
	author = {Barański, K.},
	year = {2007},
	pages = {33-59},
	title = {Trees and hairs for some hyperbolic entire maps of finite order},
	volume = {257},
	number={1},
	journal = {Mathematische Zeitschrift}
}

@article{devaney-indecomposable,
	author = {Devaney, R.},
	year = {1993},
	pages = {627-634},
	title = {Knaster-like continua and complex dynamics},
	volume = {13},
	number={4},
	journal = {Ergodic Theory and Dynamical Systems}
}

@article{devaney-jarque-indecomposable,
	author = {Devaney, R. and Jarque, X.},
	year = {2002},
	pages = {1-12},
	title = {Indecomposable continua in exponential dynamics},
	volume = {6},
	journal = {Conformal Geometry and Dynamics}
}

@article{devaney-jarque-rocha-indecomposable,
	author = {Devaney, R. and Jarque, X. and Moreno Rocha, M.},
	year = {2005},
	pages = {3281-3292},
	title = {Indecomposable continua and Misiurewicz points in exponential dynamics},
	volume = {15},
	number={10},
	journal = {International Journal of Bifurcation and Chaos in Applied Sciences and Engineering},
		shorthand={DJM05}
}

@article{baranski-karpinska,
	author = {Barański, K. and Karpińska, B.},
	year = {2007},
	pages = {391-415},
	title = {Coding trees and boundaries of attracting basins for some entire maps},
	volume = {20},
	number={2},
	journal = {Nonlinearity}
}

@article{efjs,
	author = {Evdoridou, V. and Fagella , N. and Jarque, X. and Sixsmith, D.},
	year = {2019},
	pages = {536-550},
	title = {Singularities of inner functions associated with hyperbolic maps},
	volume = {477},
	journal = {Journal of Mathematical Analysis and Applications},
		shorthand={EFJS19}
}

@article{fh,
	author = {Fagella , N.  and Henriksen, C.},
	year = {2006},
	pages = {379-394},
	title = {Deformation of Entire Functions with Baker Domains},
	volume = {15},
	journal = {Discrete and Continuous Dynamical Systems},
}

@book{milnor,
	ISBN = {9780691124889},
	author = {J. Milnor},
	publisher = {Princeton University Press},
	title = {Dynamics in One Complex Variable. Third Edition. },
	year = {2006}
}

@book{carlesongamelin,
	ISBN = {0387979425},
	author = {L. Carleson and T. W. Gamelin},
	publisher = {Springer-Verlag},
	title = {Complex dynamics },
	year = {1993}
}

@article{cowen,
	author = {C. C. Cowen},
	year = {1981},
	pages = {69-95},
	title = {Iteration and the solution of functional equations for functions in the unit disk},
	volume = {265},
	number={1},
	journal = {Transactions of the American Mathematical Society},
}

@article{konig,
	author = {H. König},
	year = {1999},
	month = {02},
	pages = {153-170},
	title = {Conformal conjugacies in Baker domains},
	volume = {59},
	number={1},
	journal = {Journal of the London Mathematical Society},
}

@article{bargmann,
	author = {D. Bargmann},
	year = {2008},
	pages = {1-36},
	title = {Iteration of inner functions and boundaries of components of the Fatou set},
	journal = {Transcendental Dynamics and Complex Analysis. Cambridge University Press},
}

@article{bergweiler,
	author = {W. Bergweiler},
	year = {1993},
	pages = {151-188},
	title = {Iteration of meromorphic functions},
	volume = {29},
	number={2},
	journal = {Bulletin of the American Mathematical Society },
}

@article{bergweiler95,
	author = {W. Bergweiler},
	year = {1995},
	pages = {251-256},
	title = {On the Julia set of analytic self-maps of the punctured plane},
	volume = {15},
	journal = {Analysis},
}

@article{baker1984,
	author = {I. N. Baker},
	year = {1984},
	pages = {337-360},
	title = {Wandering domains in the iteration of entire functions},
	volume = {47},
	journal = {Proceedings of the Journal Mathematical Society},
}

@book{pommerenke,
	ISBN = {3540547517},
	title = {Boundary behaviour of conformal maps},
	publisher = {Springer-Verlag, Berlin},
	author = {C. Pommerenke},
	year = {1992}
}

@article{fatou1920,
	author = {P. Fatou},
	year = {1920},
	pages = {208–314},
	title = {Sur les équations fonctionnelles},
	volume = {48},
	journal = {Bulletin de la Société Mathématique de France },
}

@article{goldberg-devaney,
	author = { R. L. Devaney and L. R. Goldberg },
	year = {1987},
	pages = {253-266},
	title = {Uniformization of attracting basins for exponential maps},
	volume = {55},
	number={2},
	journal = {Duke Mathematical Journal},
}

@article{rippon-stallard,
	author = { P. J. Rippon and  G. M. Stallard},
	year = {2018},
	pages = {801–810},
	title = {Boundaries of univalent Baker domains},
	volume = {134},
	number={2},
	journal = {Journal d'Analyse Mathématique},
}

@article{rippon-bakerdomains,
	author = { P. J. Rippon},
	year = {2008},
	pages = {371-395},
	title = {Baker domains},
	volume = {348},
	journal = {London Math. Soc. Lecture Note Ser.},
	editor={Cambridge University Press},
}

@article{rippon-bakerdomainsmeromorphic,
	author = { P. J. Rippon},
	year = {2006},
	pages = {1225-1233},
	title = {Baker domains of meromorphic functions},
	volume = {26},
	number={4},
	journal = {Ergodic Theory and Dynamical Systems},
}

@article{rippon-stallard-familiesbakersI,
	author = { P. J. Rippon and  G. M. Stallard},
	year = {1999},
	pages = {1005-1012},
	title = {Families of Baker domains. I},
	volume = {12},
	number={4},
	journal = {Nonlinearity},
}

@article{rippon-stallard-familiesbakersII,
	author = { P. J. Rippon and  G. M. Stallard},
	year = {1999},
	pages = {67-78},
	title = {Families of Baker domains. II},
	volume = {3},
	journal = {Conformal Geometry and Dynamics},
}

@article{baker-dominguez,
	author = {I. N. Baker and Domínguez, P. },
	year = {1999},
	pages = {437-464},
	title = {Boundaries of unbounded Fatou components of entire functions},
	volume = {24},
	journal = {Annales Academiae Scientiarum Fennicae Mathematica},
}

@article{doering-mañé,
	author = {Doering, C. and Mañé, R.},
	year = {1991},
	pages = {1-79},
	title = {The Dynamics of Inner Functions},
	volume = {Volume 3},
	journal = {Ensaios de Matemática (SBM)}
}

@article{eremenko-lyubich,
	author = {Eremenko, A. and Lyubich, M.},
	year = {1992},
	pages = {989-1020},
	title = {Dynamical properties of some classes of entire functions},
	volume = {42},
	number={4},
	journal = {Annales de l'Institut Fourier}
}

@book{Steinmetz+2011,
	author = {N. Steinmetz},
	title = {Rational Iteration: Complex Analytic Dynamical Systems},
	year = {1993},
	publisher = {De Gruyter},
	ISBN = {3110137658}
}

@article{rempe2007,
	author = {Rempe, L.},
	year = {2007},
	pages = {353-369},
	title = {On nonlanding dynamic rays of exponential maps},
	volume = {32},
	journal = {Annales Academiae Scientiarum Fennicae}
}

@article{schleicher-zimmer,
	author = {Schleicher, D. and Zimmer, J.},
	year = {2003},
	pages = {380-400},
	title = {Escaping points of exponential maps},
	volume = {67},
	number={2},
	journal = {Journal of the London Mathematical Society}
}

@phdthesis{tesi-lasse-rempe,
	  author =       {Rempe, L.},
	title =        {Dynamics of exponential maps},
	school =       {Christian-Albrechts-Universität Kiel},
	year =         {2003},
	type =         {},
}

@article{curry,
	author = {Curry, S.B.},
	year = {1991},
	pages = {145-151},
	title = {One-dimensional nonseparating continua with disjoint $\varepsilon$-dense subcontinua },
	volume = {32},
	number={2},
	journal = {Topology and its Applications}
}

@article {benini-rempe,
	AUTHOR = {Benini, A. M. and Rempe, L.},
	TITLE = {A landing theorem for entire functions with bounded
	post-singular sets},
	JOURNAL = {Geometric and Functional Analysis},
	VOLUME = {30},
	YEAR = {2020},
	NUMBER = {6},
	PAGES = {1465-1530},
}

@article {leti,
	AUTHOR = {Pardo-Sim\'{o}n, L.},
	TITLE = {Criniferous entire maps with absorbing {C}antor bouquets},
	JOURNAL = {Discrete and Continuous Dynamical Systems. Series A},
	VOLUME = {42},
	YEAR = {2022},
	NUMBER = {2},
	PAGES = {989-1010},
	shorthand={Par22}
}

@article {bodelon,

    AUTHOR = {Bodel\'{o}n, C. and Devaney, R. L. and Hayes, M. and
Roberts, G. and Goldberg, L. R. and Hubbard, J. H.},
TITLE = {Hairs for the complex exponential family},
JOURNAL = {International Journal of Bifurcation and Chaos in Applied
Sciences and Engineering},
VOLUME = {9},
YEAR = {1999},
NUMBER = {8},
PAGES = {1517--1534},
}

@book {mesuraharmonica,
	AUTHOR = {Bracci, F. and Contreras, M. D. and D\'{\i}az-Madrigal,
	S.},
	TITLE = {Continuous semigroups of holomorphic self-maps of the unit
	disc},
	year={2020},
	SERIES = {Springer Monographs in Mathematics},
	PUBLISHER = {Springer},
	ISBN = {9783030367817},
		shorthand={BCD20}
}

@book {harmonicmeasure2,
	AUTHOR = {Garnett, J. B. and Marshall, D. E.},
	TITLE = {Harmonic measure},
	SERIES = {New Mathematical Monographs},
	VOLUME = {2},
	PUBLISHER = {Cambridge University Press, Cambridge},
	YEAR = {2005},
	ISBN = {9780521470186},
}

@incollection {DevaneyExp,
	AUTHOR = {Devaney, R. L.},
	TITLE = {Complex dynamics and entire functions},
	BOOKTITLE = {Complex dynamical systems ({C}incinnati, {OH}, 1994)},
	SERIES = {Proc. Sympos. Appl. Math.},
	VOLUME = {49},
	PAGES = {181--206},
	PUBLISHER = {Amer. Math. Soc., Providence, RI},
	YEAR = {1994},
}

\end{document}